\numberwithin{equation}{section}
\newtheorem{theorem}{Theorem}[section]
\newtheorem{proposition}[theorem]{Proposition}
\newtheorem{corollary}[theorem]{Corollary}
\newtheorem{lemma}[theorem]{Lemma}
\newtheorem{definition}[theorem]{Definition}
\newtheorem{remark}[theorem]{Remark}
\newenvironment{proofad}{\removelastskip\par\medskip   
\noindent{\em Proof of $\text{}\!$ {\rm Theorem  \ref{th:convPM}}.}
\rm}{\penalty-20\null\hfill$\square$\par\medbreak} 
\newenvironment{proofadmain}{\removelastskip\par\medskip   
\noindent{\em Proof of $\text{}\!$ {\rm Theorem  \ref{th:main}}.}
\rm}{\penalty-20\null\hfill$\square$\par\medbreak} 
\definecolor{darkred}{rgb}{0.8,0,0}
\definecolor{darkblue}{rgb}{0,0,0.7}
\definecolor{darkgreen}{rgb}{0,0.4,0}
\newcommand{\eps}{\varepsilon}
\newcommand{\id}{{ I}}
\renewcommand{\d}{{\rm d}}
\newcommand{\dd}{\d}
\newcommand{\R}{{\mathbb R}}
\newcommand{\NN}{{\mathbb N}}
\newcommand{\N}{{\mathbb N}}
\renewcommand{\L}{{\rm L}}
\newcommand{\HH}{{\mathcal H}}
\newcommand{\PP}{{\mathcal P}}
\renewcommand{\AA}{{\mathcal A}}
\newcommand{\C}{{\mathbb C}}
\newcommand{\FF}{{\mathcal F}}
\newcommand{\GG}{{\mathcal G}}
\renewcommand{\SS}{{\mathcal S}}
\newcommand{\KK}{{\mathcal K}}
\newcommand{\VV}{{\mathcal V}}
\renewcommand{\ln}{\log}
\newcommand{\un}{{\rm 1\kern -2.5pt l}}
\newcommand{\de}{\partial}
\newcommand{\supp}{{\rm supp}}
\newcommand{\RRR}{\color{black}}
\begin{document}
\title[]{A gradient flow approach to the porous medium equation with fractional pressure}
\date{\today}
\keywords{Interaction equation, Fractional diffusion, Minimising movements scheme, Gradient flow, Monge-Kantorovich-Wasserstein distance}
\subjclass[2010]{35A01, 49K20}
\begin{abstract}
We consider a family of {\RRR porous media equations with fractional pressure,} recently studied by Caffarelli and V\'azquez.
We show the construction of a weak solution as Wasserstein gradient flow  of a square fractional Sobolev norm.
Energy dissipation inequality,  regularizing effect and decay estimates for the $L^p$ norms are established.
Moreover, we show  that a
classical porous medium equation  can be obtained as a limit case.
\end{abstract}

\author{S. Lisini}
\address{Stefano Lisini -- Universit\`a degli Studi di Pavia, Dipartimento di Matematica "F. Casorati", via Ferrata 1, I-27100 Pavia, Italy}
\email{stefano.lisini@unipv.it}
\author{E. Mainini}
\address{Edoardo Mainini -- Universit\`a degli Studi di Genova, 
Dipartimento di
   Ingegneria meccanica, energe\-tica, gestionale
e dei trasporti (DIME), Piazzale Kennedy 1, I-16129 Genova, Italy
}
\email{edoardo.mainini@unipv.it}
\author{A. Segatti}
\address{Antonio Segatti -- Universit\`a degli Studi di Pavia, Dipartimento di Matematica "F. Casorati", via Ferrata 1, I-27100 Pavia, Italy}
\email{antonio.segatti@unipv.it}
%
%
%

\thanks{}

\maketitle
\section{Introduction}

{\RRR 
 We consider the evolution  problem }
\begin{equation}\label{equation}
\begin{cases}
\partial_t u-\mathrm{div}(u\nabla v )=0 &\mbox{in }\mathbb{R}^d\times (0,+\infty), \\
(-\Delta)^sv=u &\mbox{in }\mathbb{R}^d\times(0,+\infty),\\
u(0)=u_0,
\end{cases}
\end{equation}
where the initial datum $u_0$ is a Borel probability measure on ${\mathbb{R}^d}$, {\RRR $d\ge1$,   and $0<s<\min\{1,\frac{d}{2} \}$.}
The linear operator $(-\Delta)^s$ is the $s$-fractional Laplacian on $\mathbb{R}^d$, 
defined by means of  Fourier transform 
as 
$$\widehat{((-\Delta)^sv)}(\xi)=|\xi|^{2s}\hat v(\xi).$$
We  define  the Riesz kernel $K_{s}$ by  the relation
$\hat K_{s}(\xi)=|\xi|^{-2s}$, that is,
$$ K_{s}(x)=C_{d,s}{|x|^{-d+2s}},$$
where  $C_{d,s}$ is a normalization constant. With our convention for the Fourier transform, i.e., $\hat \varphi(\xi)= \int_{\mathbb{R}^d}e^{-ix\cdot\xi}\,  \varphi(x)\,\d x,$ we have
\begin{equation}\label{kernelconstant}
C_{d,s}=\pi^{-d/2}2^{-2s}\Gamma(d/2-s)/\Gamma(s),
\end{equation}
where $\Gamma$ is the Euler Gamma function,  see  for instance \cite[Section 1.2.2]{AH}.
The relation between $u$ and $v$, in the
   second equation of \eqref{equation}, is understood as $v=K_{s}\ast u$. Therefore,  problem \eqref{equation}
corresponds to an evolution repulsive interaction equation, characterized by the Riesz kernel $K_{s}$.


Problem \eqref{equation} has been  studied by Caffarelli and V\'azquez in \cite{CV},
where  existence of solutions  was proved for non-negative bounded initial data 
which decay exponentially fast at infinity.  
The existence result of \cite{CV} has been generalized to $L^1$ positive initial data in \cite{CSV} and to positive finite measure data
{\RRR  in \cite{SV, SDV}}. 
Moreover, \cite{CSV, CV3} contain comprehensive results about H\"older regularity of solutions.  
Barenblatt profiles and asymptotic behavior are investigated in \cite{CV2}.
Exponential convergence towards stationary states in one space dimension, 
after changing to self similar variables, has been obtained in \cite{CHSV}. 
{\RRR More general nonlocal porous media equations are considered for instance in \cite{BIK, SDV, SDV2, SDV3}. See
also \cite{Vcime} and the references therein.}

The system \eqref{equation} is derived 
by starting from the continuity equation 
\begin{equation*}\label{continuityeq}
\partial_t u + \hbox{div}(u \mathbf{v}) = 0, 
\end{equation*}
governing the evolution of the density distribution $u$, 
driven by a velocity vector field $\mathbf{v}$. Now, as it happens for the classical porous medium equation, 
we suppose that $\mathbf{v}$ is the gradient of a scalar function $v$, the pressure, which is assumed to
be a function of the density $u$. 
The system \eqref{equation} emerges by choosing the nonlocal
closing relation 
$\mathbf{v} := -\nabla v = -\nabla (K_s\ast u)$. 

 Let us briefly discuss the two extreme cases $s=0$ and $s=1$.
When $s=0$, the second equation formally reduces 
to the identity $v =u$ and thus the system in \eqref{equation} becomes
\begin{equation}\label{s=0}
\partial_t u - \frac{1}{2}\Delta u^2 = 0,
\end{equation}
that is a classical (local) porous medium equation. 
Among the other results, in this paper  
we will make this transition rigorous (see Theorem \ref{th:convPM}). 
The other extreme situation corresponds to the case $s=1$, {\RRR $d\ge 2$,} where  the second equation 
becomes $-\Delta v = u$. The resulting system \eqref{equation} is related to the 
Chapman-Rubinstein-Schatzman's mean field model in superconductivity (see \cite{CRS}) and to the E's model in superfluidity, 
at least for positive solutions (see \cite{E}). Existence for this system when $s=1$ was first proved in two space dimensions
in \cite{LZ}. More recently, Serfaty and V\'azquez \cite{SV} proved that the solutions of the system 
\eqref{equation} converge in a proper way when $s\nearrow 1$
to the solutions of the corresponding system with $s=1$.


 \subsection*{The gradient flow structure}
Our main contribution is the rigorous construction
of non-negative solutions for the Cauchy problem  \eqref{equation}
as trajectories of a gradient flow.
More precisely we consider the space $\PP_2(\R^d)$ of Borel probability measures on $\R^d$ 
with finite second moment  endowed with the 2-Wasserstein distance, here denoted by $W$ (see Section \ref{Sec:notation}).
For  $u\in \PP_2(\R^d)$ we define the energy functional
\begin{equation*}
	\FF_s(u)=\frac12\|u\|^2_{\dot H^{-s}(\R^d)}:=\frac12\frac1{(2\pi)^d} \int_{\mathbb{R}^d}|\xi|^{-2s}|\hat u (\xi)|^2\,\d\xi,
\end{equation*}
that is,  $\FF_s$ is  the square norm of the   homogeneous
Sobolev space ${\dot H^{-s}(\mathbb{R}^d)}$, see Section \ref{SobolevHomSpace}.
We observe that this functional 
  admits the alternative representation
$$\FF_s(u)=\frac12\int_{\R^d}\int_{\R^d} K_{s}(x-y)\,\d u(x)\, \d u(y),$$
 enlightening the structure of   an interaction energy, characterized by the Riesz convolution kernel $K_s$. 
Within the gradient flow interpretation, we prove that
a solution to the Cauchy problem  \eqref{equation} can be obtained by means of the minimizing movement approximation scheme,
applied to the functional $\FF_s$ in the metric space $(\PP_2(\R^d),W)$.
A general theory of  minimizing movements in metric spaces and its applications to the  space $(\PP_2(\R^d),W)$ is contained in the book of 
Ambrosio, Gigli and Savar\'e \cite{AGS}. 
The  gradient flow approach in $(\PP_2(\R^d),W)$  was first  exploited by Jordan-Kinderlehrer-Otto in the seminal paper \cite{JKO}.
Let us illustrate the strategy in our case: given $u_0\in\dot H^{-s}(\mathbb{R}^d)\cap\PP_2(\mathbb{R}^d)$ and $\tau>0$
{\RRR we introduce the following time discretization scheme. We consider 
a uniform partition of size $\tau$ of the time interval $[0,+\infty)$ and we let}
$u_\tau^0$ be a suitable approximation of the initial datum (see \eqref{utau0}). Then, we recursively define
\begin{equation}\label{minmov1}
u_\tau^k\in{\rm Argmin}_{u\in\PP_2(\mathbb{R}^d)}\left\{ \FF_s(u)+\frac1{2\tau}\,W^2(u,u^{k-1}_\tau)\right\}, \qquad \mbox{for } k=1,2,\ldots.
\end{equation}
If $\{u_\tau^k\}_{k\in\mathbb{N}}\subset\PP_2(\mathbb{R}^d)$ is a sequence defined by \eqref{minmov1}, we introduce the piecewise constant interpolation
$$
u_\tau(t):=u^{\lceil t/\tau\rceil}_{{\RRR\tau}},\qquad t\in [0,+\infty),
$$
where $\lceil a\rceil:=\min\{m\in \mathbb{N}:m>a\}$ is the upper integer part of the real number $a$. {\RRR We refer to $u_\tau$ as discrete solution.}
We prove that this family of piecewise constant curves admits limit points as $\tau\to 0$,
and that a limit curve is a {\RRR weak solution to \eqref{equation}, satisfying some additional properties (see Theorem \ref{th:main}).}

Nonlocal evolution equations with  singular kernels appear in several mathematical models. 
However, up to now the corresponding gradient flow approach is limited to less singular interactions. 
Besides the works \cite{AMS, AS}, dealing with the Chapman-Rubinstein-Schatzman superconductivity model,
gradient flows of equations involving Newtonian interaction   appear in the  study of  the 
 Keller-Segel model 
for chemotaxis, see \cite{BCC} and the reviews \cite{B,B2}.  
The approach we propose here  is strictly related to the latter contributions, and problem \eqref{equation}, 
with the corresponding functional $\FF_s$, turns out to be a remarkable example of Wasserstein gradient flow.

\subsection*{The main result}
We shall now state the results.  The main one  is the following Theorem \ref{th:main}, which contains all the properties of the gradient flow solutions.
Throughout the paper we denote by $\HH:\PP_2(\R^d)\to (-\infty,+\infty]$
the entropy defined by $\HH(u):=\int_{\R^d}u\log u \,\dd x$ if $u$ is absolutely continuous with respect to the Lebesgue measure and
$\HH(u)=+\infty$ otherwise.
We use the notation $D(\HH)=\{u\in\PP_2(\R^d):\HH(u)<+\infty\}$ for the domain of $\HH$. Moreover, in the statement of Theorem \ref{th:main}, the approximation  $u_\tau^0$ of the initial datum $u_0$ is not arbitrary, but  given by the suitable Gaussian regularization   defined  in  Section \ref{section:functional} below, see \eqref{utau0}. 
{\RRR See also Section \ref{Sub:Wasserstein} for the definition of narrow convergence and Section \ref{subsection:MM} for the definition of the space $ AC^2([0,+\infty);(\PP_2(\mathbb{R}^d),W))$.}

\begin{theorem}\label{th:main}
Let {\RRR $d\ge 1$, $0<s<\min\{1,\frac{d}{2}\}$}  and $u_0\in \dot H^{-s}(\mathbb{R}^d)\cap\PP_2(\mathbb{R}^d)$.
Then the following assertions hold:
\begin{itemize}
\item[i)] {\bf Existence and uniqueness of discrete solutions.} For every $\tau>0$, after having defined $u_\tau^0$ 	by \eqref{utau0}, there exists a unique sequence
$\{u_\tau^k:k=1,2,\ldots\}$ satisfying \eqref{minmov1}. 
\item[ii)] {\bf Convergence and regularity.} For every vanishing sequence $\tau_n$ there exists a (not relabeled) subsequence  $\tau_{n}$ and a curve
$u\in AC^2([0,+\infty);(\PP_2(\mathbb{R}^d),W))$ such that
\[
 u_{\tau_{n}}(t) \to u(t)\quad \mbox{narrowly as $n\to\infty$, for any $t\in[0,+\infty)$.}
\]
{\RRR Moreover, } $u\in L^2((T_0,T); H^{1-s}(\R^d))$ for every 
{\RRR $0<T_0<T$},
 and
\[
 u_{\tau_{n}}\to u \quad \mbox{strongly in } L^2((T_0,T);L^2_{loc}({\R}^d)) \quad \mbox{ as $n\to\infty$
 .}
\]
Defining $v_\tau(t):=K_{s}*u_\tau(t)$ and $v(t):=K_{s}*u(t)$ $\forall\,t>0$,  we have that $\nabla v \in L^2((T_0,T);L^2(\mathbb{R}^d))$
for every {\RRR $0<T_0<T$}, and
\[
 \nabla v_{\tau_{n}}\to \nabla v \quad \mbox{weakly in } L^2((T_0,T);L^2({\R}^d)) \quad \mbox{ as $n\to\infty$
 .}
\]
\item[iii)] {\bf Solution of the equation.} Given $u,v$ from point {\rm ii)}, the first equation in \eqref{equation} is satisfied in the following {\RRR weak form:}
\[
\int_0^{+\infty}\int_{\R^d} ( \partial_t\varphi - \nabla\varphi\cdot \nabla v )u\, \dd x \, \dd t=0, \quad
\text{for all }\varphi\in C^\infty_c((0,+\infty)\times\R^d).
\]
\item[iv)] {\bf Energy dissipation inequality.} Given $u,v$ from point {\rm ii)}, there holds
\begin{equation}\label{EDI}
\FF_s(u(t))+\int_0^t\int_{\mathbb{R}^d}|\nabla v(r)|^2u(r)\,\d x\,\d r \le \FF_s(u_0), \qquad \forall \;t\in [0,+\infty).
\end{equation}
\item[v)] {\bf Regularizing effect and decay estimates.}
 For every $p\in[1,+\infty]$ there is a constant $C_p$ depending only on $p,d$ and $s$  (independent of $u_0$) such that
\begin{equation*}
	\|u(t)\|_{L^p(\R^d)}\leq C_p t^{-\gamma_p} \qquad \forall t>0,
\end{equation*}
where $\gamma_p=\frac{p-1}{p}\frac{d}{d+2(1-s)}$ for $p<+\infty$ and $\gamma_\infty=\frac{d}{d+2(1-s)}$.
In particular $u(t)\in  D(\HH)\cap L^p(\R^d)$ for every $t>0$.
\item[vi)] {\bf Entropy estimates.}
If, in addition, $u_0\in D(\HH)$, then
\[
	\HH(u(t))\leq \HH(u_0), \qquad \forall t>0.
\]
If $u_0\in L^p(\R^d)$ for some $p\in [1,+\infty]$, then
\[
	\|u(t)\|_{L^p(\R^d)}\leq \|u_0\|_{L^p(\R^d)}, \qquad \forall t>0.
\]
\end{itemize}
\end{theorem}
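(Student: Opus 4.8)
The plan is to construct the discrete solutions via the minimizing movement scheme \eqref{minmov1}, derive uniform estimates, pass to the limit, and verify each of the six assertions in turn. For point i), I would first establish that $\FF_s$ is well-defined, lower semicontinuous with respect to narrow convergence, and that sublevels of $u\mapsto\FF_s(u)+\frac{1}{2\tau}W^2(u,u^{k-1}_\tau)$ are narrowly compact in $\PP_2(\R^d)$; existence of minimizers then follows from the direct method. For uniqueness, the key is \emph{geodesic convexity}: since $K_s$ is the Riesz kernel, $\FF_s$ is convex along generalized geodesics in $(\PP_2(\R^d),W)$ (this uses that $K_s$ is convex as a function composed with the displacement interpolation structure, or an explicit computation via the Fourier representation showing $\lambda$-convexity with $\lambda=0$), and the term $\frac{1}{2\tau}W^2(\cdot,u^{k-1}_\tau)$ is strictly $\frac1\tau$-convex along generalized geodesics, so the functional being minimized is strictly convex and the minimizer is unique. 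This is the setting in which the abstract machinery of \cite{AGS} applies.

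For point ii), the strategy is classical for minimizing movements. The scheme gives the discrete energy dissipation estimate $\FF_s(u^k_\tau)+\frac{1}{2\tau}\sum_{j=1}^k W^2(u^j_\tau,u^{j-1}_\tau)\le\FF_s(u^0_\tau)\le\FF_s(u_0)$ (the last inequality requires checking that the Gaussian regularization \eqref{utau0} does not increase $\FF_s$, which holds since convolution with a probability density is a contraction in $\dot H^{-s}$ by Fourier multiplier considerations), and from this one extracts an equicontinuity estimate of the form $W(u_\tau(t),u_\tau(r))\le C|t-r+\tau|^{1/2}$. A refined Ascoli–Arzel\`a argument (as in \cite{AGS}) yields a subsequence converging narrowly at every $t$ to a curve $u\in AC^2([0,+\infty);(\PP_2(\R^d),W))$. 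The additional regularity $u\in L^2((T_0,T);H^{1-s})$ and the strong $L^2_{loc}$ convergence are the genuinely new ingredients: these must come from the \emph{regularizing effect} established independently (point v), via a flow-interchange / Evolution-Variational-type argument differentiating the entropy $\HH$ along the discrete scheme to gain an estimate on $\|\nabla u^{1-s/?}\|$ or directly on the $H^{1-s}$ seminorm — quantitatively, testing the Euler–Lagrange equation of \eqref{minmov1} against suitable variations produces $\int|\nabla v|^2u\,\dd x$ bounds, and since $\nabla v=\nabla K_s*u$ corresponds to the Fourier multiplier $i\xi|\xi|^{-2s}$, the bound $\int|\nabla v|^2u$ combined with the $L^p$ decay controls $\|u\|_{H^{1-s}}$. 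The weak convergence of $\nabla v_{\tau_n}$ then follows from the uniform $L^2((T_0,T);L^2)$ bound on $\nabla v_{\tau_n}$ and the strong convergence of $u_{\tau_n}$.

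For points iii) and iv): the discrete Euler–Lagrange equation for \eqref{minmov1} reads, for every $\varphi\in C_c^\infty(\R^d)$,
\[
\int_{\R^d}\nabla\varphi\cdot\frac{(\mathrm{id}-t^k_\tau)}{\tau}\,\dd u^k_\tau = -\int_{\R^d}\nabla\varphi\cdot\nabla v^k_\tau\,\dd u^k_\tau + o(1),
\]
where $t^k_\tau$ is the optimal transport map from $u^k_\tau$ to $u^{k-1}_\tau$; summing against a time-dependent test function, using the $W^2$-estimates to control the discrete velocity and the strong/weak convergences from ii) to pass to the limit in the (a priori problematic) product $\nabla\varphi\cdot\nabla v_{\tau_n}\,u_{\tau_n}$, gives the weak formulation in iii). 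The energy dissipation inequality iv) is obtained by passing to the limit in the discrete estimate above, using lower semicontinuity of $\FF_s$ for the first term and of the dissipation integral (weak $L^2$ convergence of $\nabla v_{\tau_n}$ together with strong convergence of $u_{\tau_n}$, via a Ioffe-type lower semicontinuity theorem) for the second, against $\FF_s(u^0_\tau)\le\FF_s(u_0)$.

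For points v) and vi): the decay estimates follow from a Nash–Moser / De Giorgi iteration on the discrete scheme, or more elegantly from the flow-interchange technique — computing the dissipation of the $L^p$ norms (equivalently of $\int u^p$) along the auxiliary heat flow or porous-medium flow and combining with a Gagliardo–Nirenberg–Sobolev inequality adapted to the $H^{1-s}$ control; the exponents $\gamma_p=\frac{p-1}{p}\frac{d}{d+2(1-s)}$ arise precisely from the scaling of \eqref{equation}, which is invariant under $u\mapsto\lambda^d u(\lambda\,\cdot,\lambda^{2(1-s)}\,\cdot\,)$. The entropy and $L^p$-contraction estimates in vi) follow from the same flow-interchange argument, now without the regularizing Gagliardo–Nirenberg step: along the scheme, $\HH(u^k_\tau)\le\HH(u^{k-1}_\tau)$ and $\|u^k_\tau\|_{L^p}\le\|u^{k-1}_\tau\|_{L^p}$ because the dissipation of these functionals along the minimizing movements has a sign (here one uses that $\HH$ and the $L^p$ norms are geodesically convex and that their ``derivative along $\FF_s$'' is $-\int$ of a nonnegative quantity — this is where geodesic convexity of $\FF_s$ is used a second time, to justify the chain-rule inequality). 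The main obstacle I anticipate is the strong $L^2_{loc}$ compactness of $u_{\tau_n}$: narrow convergence plus the $W$-equicontinuity alone do not give it, so one must genuinely exploit the $H^{1-s}$ regularity gained from the regularizing effect together with an Aubin–Lions–Simon argument in the metric setting — establishing that the discrete time-derivative is controlled in a negative-order space while $u_{\tau_n}$ is bounded in $L^2((T_0,T);H^{1-s})$, which requires care because the natural bound on the discrete velocity is only in the Wasserstein sense, not in an $L^2$-in-space sense.
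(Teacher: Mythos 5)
Your overall roadmap---scheme, flow interchange with auxiliary entropies, Euler--Lagrange equation, De Giorgi iteration for $p=\infty$---matches the paper's. But the uniqueness argument you propose for point i) is flawed. You claim that $\FF_s$ is convex along generalized geodesics because $K_s$ is the Riesz kernel. An interaction energy $\tfrac12\int\!\int K(x-y)\,\dd\mu\,\dd\mu$ is displacement convex along (generalized) geodesics when the kernel $K$ is a convex function, and $K_s(x)=C_{d,s}|x|^{-(d-2s)}$ is \emph{not} convex on $\R^d$: it is infinite at the origin, so taking $x_0=-x_1\neq 0$ gives $K_s\bigl(\tfrac12 x_0+\tfrac12 x_1\bigr)=K_s(0)=+\infty$ while $\tfrac12 K_s(x_0)+\tfrac12 K_s(x_1)<+\infty$. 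The paper's actual argument sidesteps displacement convexity entirely: $\FF_s$ is the square of the Hilbert norm $\|\cdot\|_{\dot H^{-s}}$, hence \emph{strictly convex along linear interpolations} $(1-t)\mu_0+t\mu_1$ of measures, and $W^2(\cdot,\nu)$ is also convex along such linear interpolations, so the penalized functional is strictly convex in the linear sense and the minimizer is unique. The same confusion resurfaces in your discussion of points v)--vi): the flow-interchange inequality does \emph{not} rely on geodesic convexity of $\FF_s$. It uses the displacement convexity of the \emph{auxiliary} entropies $\HH$, $\GG_p$, $\KK$, via the Evolution Variational Inequality their gradient flows satisfy, combined with the positivity of the dissipation $\mathfrak{D}_\VV\FF_s\ge 0$, which is a Stroock--Varopoulos-type statement about the bilinear form $\langle\cdot,\cdot\rangle_{1-s}$ applied to a function and its monotone image, with no appeal to convexity of $\FF_s$ along transport.

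A second, milder confusion concerns where the $L^2((T_0,T);H^{1-s})$ regularity comes from. It is not produced by the $\int|\nabla v|^2 u\,\dd x$ bound from the Euler--Lagrange equation: that quantity equals $W^2(u^k_\tau,u^{k-1}_\tau)/\tau^2$ and is an $L^2(u)$-weighted control of the velocity, which does not bound any Lebesgue or Sobolev norm of $u$. The $H^{1-s}$ bound comes instead from the flow interchange with the heat semigroup $S_t$ (the gradient flow of $\HH$): along $S_t$ the dissipation of $\FF_s$ is exactly $-\|S_t(u^k_\tau)\|^2_{\dot H^{1-s}}$, and telescoping the resulting inequality over $k$ gives $\int_{T_0}^T\|u_\tau\|^2_{\dot H^{1-s}}\,\dd t\le\HH(u_\tau^{N_0})-\HH(u_\tau^N)$, which, combined with the moment bound and a Carleman-type lower bound on $\HH$, yields the claimed space-time estimate. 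With these two corrections your outline is consistent with the paper's proof.
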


\begin{remark}\label{firstremark}\rm
The proof of Theorem \ref{th:main} will be given as the collection of different results through the paper. Let us give {\RRR some}
comments here.
\begin{itemize}
\item If $u_0\in D(\HH)$, then the results of point {\upshape ii)} also hold for $T_0=0$ and the results of points 
{\upshape i)}-{\upshape ii)}-{\upshape iii)}-{\upshape iv)}  do not require the approximation {\RRR of} the initial datum 
(that is, we could define $u_\tau^0=u_0$ in this case).
\item The value of the constant $C_p$ in point {\upshape v)} is explicit, see Lemma \ref{lemma:decay2} below for $p\in(1,+\infty)$ and 
Theorem \ref{th:infinity} for $p=+\infty$. 
{\RRR
If $p=1$  we have $C_1=1$ and equality holds in points  {\upshape v)} and  {\upshape vi)} 
because mass conservation is an automatic consequence of the Wasserstein gradient flow construction of solutions.}
\item For every $p\in [1,+\infty]$ the exponent $\gamma_p$ in point {\upshape v)} is sharp, since the Barenblatt-type solutions constructed in  \cite{CV2}
have the same decay rate.
\item The solutions that we construct are weak energy solutions in the terminology of Caffarelli and Vazquez.
{\RRR Consequently they are also H\"older continuous thanks to \cite[Theorem 5.1]{CSV}. The finite speed of propagation
is obtained by Caffarelly and Vazquez in \cite{CV} and relies on their construction of weak solutions (see also \cite{I} and \cite{SDV2}).
It would be an interesting
problem to obtain the finite speed of propagation directly from our discrete scheme.}
\RRR \item Theorem \ref{th:main} holds if we consider  positive measure data in $\dot H^{-s}(\mathbb{R}^d)$, with finite second moment and mass $M>0$. 
In such case, the constant $C_p$ from point {\upshape v)} gets multiplied by $M^{\ell_p}$ where $\ell_p=\tfrac{2p(1-s)+d}{2p(1-s)+dp}$ if $p\in[1,+\infty)$ and $\ell_\infty=\tfrac{2(1-s)}{2(1-s)+d}$. This scaling is the same  obtained in \cite{CSV} for positive $L^1(\mathbb{R}^d)$ data. See also Remark \ref{finalremark} below.
\end{itemize}
\end{remark}

Let us summarize the main techniques and the strategy that we shall use in the paper.
We start with the analysis of the discrete variational problem \eqref{minmov1} proving existence and uniqueness of the {\RRR discrete solutions}.
Moreover we analyze the regularity of minimizers, which are indeed shown to belong to $\dot H^{1-s}(\mathbb{R}^d)$, 
and not only to $\dot H^{-s}(\mathbb{R}^d)$. In order to do this we make use of the  flow interchange technique, 
described by McCann, Matthes and Savar\'e in \cite{MMS}. 
The improved regularity of minimizers allows to perform variations along transport maps and to derive a
corresponding Euler-Lagrange equation, which yields a discrete formulation of problem \eqref{equation}.
Moreover, the obtained regularity estimates entail sufficient compactness in order 
to pass to the limit in such discrete formulation, obtaining a weak solution to problem \eqref{equation}.  
 Finally, in order to obtain the energy dissipation inequality of functional $\FF_s$ along the solution we use the De Giorgi variational interpolation.
In these steps we often work in Fourier variables. This  reveals useful and appears quite natural, starting from the definition of the energy functional.   

The other important features that we discuss are the regularizing effect and the decay rate at infinity of $L^p$ norms 
stated in point {\upshape v)} of Theorem \ref{th:main}. 
We stress that the regularizing effect allows to treat the case of general $\PP_2\cap\dot H^{-s}$ initial data.
The decay rate of the $L^p$ norms was already obtained in \cite{CSV}.
From our point of view, this relates to the interesting issue of finding general  $L^p$ estimates 
at the discrete level of the  minimizing movements scheme, 
along with the corresponding decay rates for large times, which is new in this framework. 
At the discrete level, for $p<+\infty$, we obtain an estimate of the form
\begin{equation*}
    \|u_\tau^k\|_{L^p(\mathbb{R}^d)} \leq \min \{ \|u_\tau^0\|_{L^p(\mathbb{R}^d)},C_p(k\tau)^{-\gamma_p}\}  +R_\tau
     ,\qquad k=1,2,3,...,
\end{equation*}
where $\gamma_p=\frac{p-1}{p}\frac{d}{d+2(1-s)}$ and $R_\tau$ is a suitable remainder term.
Such an estimate is proved by combining the flow interchange technique 
with  Sobolev inequalities. 
The  term $R_\tau$ is then shown to vanish as $\tau\to0$, thus yielding the desired decay estimates of the $L^p$ norms for $p<+\infty$. 
However, it is not possible to directly pass to the limit as $p\to +\infty$, because the multiplicative constant $C_p$ blows up.
We note that an analogous difficulty for the case of the porous medium equation
was observed for instance in \cite{BG}, when trying to obtain the decay rate of the $L^\infty$ norm 
by making use of Sobolev inequalities.

 In order to obtain the $L^\infty$ decay, a refined argument is indeed necessary. 
Here, we adapt the techniques of Caffarelli-Soria-V\'azquez \cite{CSV} to the discrete setting. 
Their approach  for proving $L^\infty$ decay estimates was previously introduced by Caffarelli and Vasseur \cite{CVas,CVas2} 
for the case of the semigeostrophic equation, and it is based on the De Giorgi technique for elliptic equations. 
In order to  apply this technique within the discrete setting we introduce a sequence of minimizing movements approximations on a smaller scale. 
{\RRR This construction represents one of the main novelties of the paper (see Section \ref{sec:infty}).
The new approximation provides the required informations on the solution,
allowing for an $L^2$ to $L^\infty$ argument to get $L^\infty$ decay with the expected rate $\gamma_\infty=\lim_{p\to+\infty}\gamma_p$, corresponding to the one obtained in \cite{BIK,CSV}.}

\subsection*{The limit as $s\to 0$}
A final result that we prove is the convergence of the constructed solutions to a solution of the 
standard porous medium equation \eqref{s=0} as the fractional  parameter $s$ goes to zero. 
This complements the result of Serfaty and V\'azquez \cite{SV}, where the limiting case as $s\to1$ 
(corresponding to the interaction with the Newtonian potential) is analyzed. 
More precisely, the result is stated in the following Theorem.

\begin{theorem}\label{th:convPM}
Let $u_0\in L^2(\R^d)$ and $\{u_0^s\}_{s\in(0,1)}$ be a family of initial data such that $u_0^s\in D(\FF_s)$, $u_0^s$ converges narrowly to $u_0$ as
$s\to 0$,  $\sup_{s\in(0,1)} \int_{\R^d} |x|^2 \, \d u_0^s(x) <+\infty$  and 
 $\lim_{s\to 0}\FF_s(u_0^s)=\FF_0(u_0)$ where $\FF_0(\cdot) :=\frac 12\|\cdot \|_{L^2(\mathbb{R}^d)}$.
For each $s\in(0,1)$, let $u^s$ be a solution to the corresponding equation  \eqref{equation}, with initial datum $u_0^s$, given by {\rm Theorem \ref{th:main}}.
Let moreover  $u$ be the unique solution of the Cauchy problem for the porous medium equation
\begin{equation}\label{pmequation}
\begin{cases}
\de_t u-\frac12\Delta u^2=0 &\mbox{in }\mathbb{R}^d\times (0,+\infty), \\
u(0)=u_0,
\end{cases}
\end{equation}
satisfying the energy identity
\[
\FF_0(u(T))+\int_0^T\int_{\mathbb{R}^d}|\nabla u(t)|^2u(t)\,\d x\,\d t = \FF_0(u_0), \qquad \forall \;T>0.
\]
Then we  have
\[
u^s(t)\to u(t) \quad\mbox{narrowly  as $s\to 0$ for every $t\geq 0$,}
\]
and, for every $T_0$ and $T$ such that $T>T_0>0$,
\[
u^s\to u \quad \mbox{ strongly in $L^2((T_0,T);L^2_{loc}(\mathbb{R}^d))$ \quad as $s\to 0$,}
\]
\[
 \nabla u^s \to \nabla u \quad \mbox{weakly in } L^2((T_0,T);L^2({\R}^d)) \quad \mbox{ as $s\to 0$.}
\]
\end{theorem}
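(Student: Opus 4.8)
The plan is to derive a priori estimates on the gradient-flow solutions $u^s$ that are \emph{uniform in $s$} as $s\to0$, extract a converging subsequence, identify the limit as a weak solution of \eqref{pmequation} satisfying the stated energy identity, and finally use the uniqueness in that class to upgrade the convergence to the full family. \emph{Uniform estimates.} Fix $T>0$ and take $s$ small. From \eqref{EDI} and the hypothesis $\FF_s(u_0^{s})\to\FF_0(u_0)$ one gets $\int_0^T\!\int_{\R^d}|\nabla v^s|^2u^s\,\d x\,\d r\le C$ uniformly; since $u^s$ solves the continuity equation with velocity $-\nabla v^s$, its metric derivative satisfies $|(u^s)'|^2(t)\le\int_{\R^d}|\nabla v^s(t)|^2u^s(t)\,\d x$ (see \cite{AGS}), so $\int_0^T|(u^s)'|^2\le C$, hence $W(u^s(t),u_0^{s})\le C\sqrt t$ and, using $\sup_s\int|x|^2\,\d u_0^{s}<+\infty$, the second moments of $u^s(t)$ stay bounded uniformly on $[0,T]$. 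The regularizing effect of Theorem \ref{th:main}(v) gives $\norm{u^s(t)}_{L^p(\R^d)}\le C_pt^{-\gamma_p}$ with $C_p,\gamma_p$ (which are explicit) bounded as $s\to0$, so $u^s$ is bounded in $L^\infty((T_0,T);L^2(\R^d)\cap L^\infty(\R^d))$ for every $T_0>0$, and, controlling the negative part of $x\mapsto u\log u$ by the second moment, $\HH(u^s(T_0))$ is bounded above uniformly; inserting this in the entropy dissipation estimate underlying Theorem \ref{th:main}(ii) (which yields $\int_{T_0}^T\norm{u^s(t)}^2_{\dot H^{1-s}(\R^d)}\,\d t\le\HH(u^s(T_0))$) shows $u^s$ bounded, uniformly in $s$, in $L^2((T_0,T);H^{1-s}(\R^d))$. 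Finally, from $\widehat{\nabla v^s}(\xi)=i\xi|\xi|^{-2s}\hat u^s(\xi)$ and $|\xi|^{2-4s}\le(1+|\xi|^2)^{1-2s}$ for $s\le\tfrac12$, one has $\norm{\nabla v^s(t)}_{L^2(\R^d)}^2\le\norm{u^s(t)}^2_{H^{1-2s}(\R^d)}\le\norm{u^s(t)}^2_{H^{1-s}(\R^d)}$, so $\nabla v^s$ is bounded, uniformly in $s$, in $L^2((T_0,T);L^2(\R^d))$.

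\emph{Compactness and the limit $\Lsinv\to I$.} Since $\partial_tu^s=\mathrm{div}(u^s\nabla v^s)$ with $u^s\nabla v^s$ bounded in $L^2((T_0,T);L^1(\R^d))$ (from $\int_{\R^d}|u^s\nabla v^s|\le(\int_{\R^d}|\nabla v^s|^2u^s)^{1/2}$), $\partial_tu^s$ is bounded in $L^2((T_0,T);H^{-m}_{loc}(\R^d))$ for $m$ large; together with the uniform bound in $L^2((T_0,T);H^{1-s}(\R^d))$, $1-s>\tfrac12$, and the compact embedding $H^{1-s}\hookrightarrow\hookrightarrow L^2_{loc}$, the Aubin-Lions-Simon lemma produces $s_n\to0$ and a limit $u$ with $u^{s_n}\to u$ strongly in $L^2((T_0,T);L^2_{loc}(\R^d))$; by the uniform $W$-equicontinuity and the uniform second moment bound, a classical Ascoli-type argument (cf. \cite{AGS}) gives, on a further (not relabeled) subsequence, $u^{s_n}(t)\to u(t)$ narrowly for every $t\ge0$ with $u\in AC^2([0,+\infty);(\PP_2(\R^d),W))$; moreover $\nabla v^{s_n}\rightharpoonup g$ weakly in $L^2((T_0,T);L^2(\R^d))$. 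To identify $g=\nabla u$ I use $v^s\to u$: from $\widehat{v^s-u^s}(\xi)=(|\xi|^{-2s}-1)\hat u^s(\xi)$, with $|\hat u^s|\le1$ and $\int_{|\xi|\le1}(|\xi|^{-2s}-1)^2\,\d\xi\to0$ on $\{|\xi|\le1\}$ (dominated convergence) and $(1-|\xi|^{-2s})^2\le4s^2(\log|\xi|)^2\lesssim s^2(1+|\xi|^2)^{1/2}$ on $\{|\xi|>1\}$, one gets $\int_{T_0}^T\norm{v^s-u^s}^2_{L^2(\R^d)}\,\d t\le o_s(1)+Cs^2\int_{T_0}^T\norm{u^s}^2_{H^{1/2}(\R^d)}\,\d t\to0$; hence $v^{s_n}\to u$ strongly in $L^2((T_0,T);L^2_{loc}(\R^d))$, which forces $g=\nabla u$ and $u\in L^2((T_0,T);\dot H^1(\R^d))$. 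The uniform $L^2_tL^2_x$ bound and the weak convergence of $\nabla u^s$ asserted in the statement are obtained analogously, revisiting the $L^2$ and entropy estimates.

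\emph{Limit equation, initial datum, energy identity.} Passing to the limit in the weak formulation of Theorem \ref{th:main}(iii) for $u^{s_n}$ with $\varphi\in C^\infty_c((0,+\infty)\times\R^d)$: the term $\int\partial_t\varphi\,u^{s_n}$ converges by strong $L^2_{loc}$ convergence, and $\int\nabla\varphi\cdot\nabla v^{s_n}u^{s_n}\to\int\nabla\varphi\cdot\nabla u\,u$ because $\nabla v^{s_n}\rightharpoonup\nabla u$ weakly while $u^{s_n}\nabla\varphi\to u\nabla\varphi$ strongly in $L^2((T_0,T);L^2(\R^d))$, so $u$ solves \eqref{pmequation} weakly. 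From $W(u^{s_n}(t),u_0^{s_n})\le C\sqrt t$, narrow convergence of $u^{s_n}(t)$ and $u_0^{s_n}$, and lower semicontinuity of $W$ under narrow convergence, $W(u(t),u_0)\le C\sqrt t$, so $u(0)=u_0$. For the energy identity, by the previous steps $u\in L^2((T_0,T);H^1(\R^d))\cap L^\infty((T_0,T);L^2(\R^d))$ with $\partial_tu=\mathrm{div}(u\nabla u)\in L^2((T_0,T);H^{-1}(\R^d))$, so the chain rule gives $\FF_0(u(T))+\int_{T_0}^T\!\int_{\R^d}|\nabla u|^2u\,\d x\,\d t=\FF_0(u(T_0))$; taking $\liminf_{s_n\to0}$ in \eqref{EDI}, the right side tends to $\FF_0(u_0)$ and the left side is lower semicontinuous — for $\FF_s(u^{s_n}(T))$ use $|\xi|^{-2s}\ge R^{-2s}$ on $B_R$ and Fatou in Fourier variables, then $R\to+\infty$; for the dissipation use $\sqrt{u^{s_n}}\nabla v^{s_n}\rightharpoonup\sqrt u\,\nabla u$ weakly in $L^2((T_0,T)\times\R^d)$ (the $L^\infty$ bound of Theorem \ref{th:main}(v) making $\sqrt{u^{s_n}}\to\sqrt u$ strongly in $L^2_{loc}$), then $T_0\to0$ — which yields $\FF_0(u(t))\le\FF_0(u_0)$ for all $t>0$; since $T\mapsto\FF_0(u(T))$ is nonincreasing and $\FF_0(u_0)\le\liminf_{t\to0}\FF_0(u(t))$ (again Fatou in Fourier), we get $\FF_0(u(T_0))\to\FF_0(u_0)$, and letting $T_0\to0$ in the chain-rule identity (monotone convergence) recovers the full energy identity. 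Since such a solution of \eqref{pmequation} is unique (classical porous medium theory, cf. \cite{Vcime}), the limit is independent of the subsequence, whence the whole family $u^s$ converges, with the strong/weak convergences from the compactness step.

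\emph{Main obstacle.} The crux is in the uniform estimates and compactness: one must check that the quantitative ingredients behind Theorem \ref{th:main} — the constants $C_p$ of the regularizing effect, the entropy and $\dot H^{1-s}$ bounds, the relevant Sobolev constants and embeddings — stay bounded as $s\to0$, and one must quantify $\Lsinv u^s-u^s\to0$ in $L^2_tL^2_x$ with the right uniformity in $s$; granted this, the passage to the limit and the recovery of the energy identity are routine semicontinuity arguments.
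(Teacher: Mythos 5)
Your proposal is essentially correct, and the global scheme (uniform estimates, subsequential compactness, identification of the limit equation, upgrade to full-family convergence via uniqueness) matches the paper's. The paper, however, packages the convergence part into a preliminary lemma that establishes only the energy \emph{inequality} for the limit, and then finishes the proof of the theorem in one line by invoking \cite[Theorems 11.2.1 and 11.2.5]{AGS}: uniqueness holds in the class of $L^2$ data energy-\emph{inequality} solutions, and that unique solution is automatically the Wasserstein gradient flow, which satisfies the energy \emph{identity}. You instead prove the identity directly: chain rule for $\tfrac12\|u(T)\|_{L^2}^2$ on $(T_0,T)$ (using $u\in L^2_tH^1_x\cap L^\infty_t L^\infty_x$, $\partial_t u\in L^2_tH^{-1}_x$), lower semicontinuity in the EDI to obtain $\FF_0(u(t))\le\FF_0(u_0)$, weak $L^2$ lower semicontinuity at $t=0$ to get $\lim_{t\to0}\FF_0(u(t))=\FF_0(u_0)$, and a monotone-convergence passage $T_0\to0$. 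Both are valid; the paper's reference is cleaner, while your argument is more self-contained and explains why the inequality actually improves to an identity. A few lower-level differences: you obtain compactness by Aubin--Lions--Simon with a time-derivative bound in $H^{-m}_{loc}$, whereas the paper argues by interpolation between $H^{-1/2}$ and $H^{1/2}$ and dominated convergence; you identify the velocity-field limit by quantifying $\|v^s-u^s\|_{L^2_tL^2_x}\to0$ through $(1-|\xi|^{-2s})$ estimates, whereas the paper passes to the limit directly in Fourier variables with a uniform majorant $\max\{1,|\xi|\}\,|\hat\varphi_t(\xi)|$. Both are fine. One caveat: the asserted weak convergence of $\nabla u^s$ (not $\nabla v^s$) in $L^2_tL^2_x$ does not follow from $u^s\in L^2_t H^{1-s}_x$, since $\|\nabla u^s\|_{L^2}^2=\int|\xi|^2|\hat u^s|^2\,\d\xi$ is not controlled by $\int(1+|\xi|^2)^{1-s}|\hat u^s|^2\,\d\xi$; the paper's own lemma proves convergence of $\nabla v^{s_n}$, and the theorem statement appears to carry a typo there — your parenthetical claim that the $\nabla u^s$ bound is obtained "analogously" is unjustified and should be dropped, but this is an issue with the statement rather than a gap in the substantive argument.
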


\vspace{3mm}
\noindent\textbf{Plan of the paper}.
Section 2 introduces the basic framework for gradient flows in the Wasserstein space and for fractional Sobolev norms. 
Section 3 shows the convergence of the scheme to some absolutely continuous curve in $\PP_2(\mathbb{R}^d)$, owing only to the general theory of minimizing movements, and not relating to the specific choice of functional $\FF_s$.
Section 4 introduces the flow interchange, which will be repeatedly used in order to obtain further regularity of minimizers,
the regularizing effect of the dynamics, and the $L^p$  decay estimates for $p\in(1,\infty)$. 
Section 5 is devoted to the Euler-Lagrange equation for discrete minimizers, thus building up the key element for the  existence result.
Section 6 proves existence, by showing that the limit curve found in Step 3 is in fact regular enough for giving sense 
to the term $u\nabla v$ and satisfies equation \eqref{equation}. 
This is moreover a gradient flow solution, so that \eqref{equation} holds in the sense of distributions and 
an energy dissipation inequality for functional $\FF_s$ holds.
Section 7 introduces the double scale approximation and proves the $L^\infty$ decay estimates, 
thus completing the proof of Theorem \ref{th:main}
Eventually, Section 8 contains the proof of Theorem \ref{th:convPM}.

\section{Notation and preliminary results} \label{Sec:notation}
\subsection{Wasserstein distance}\label{Sub:Wasserstein}
We denote by $\PP(\mathbb{R}^d)$ the set of Borel probability measures on $\mathbb{R}^d$.
The narrow convergence in $\PP(\mathbb{R}^d)$ is defined in duality with continuous and bounded functions on $\mathbb{R}^d$, i.e.,
a sequence $\{u_n\}\subset \PP(\R^d)$ narrowly converges to $u\in  \PP(\R^d)$ if
$\int_{\R^d} \phi \, \d u_n \to \int_{\R^d} \phi \, \d u$ for every $\phi\in C_b(\R^d)$,
where  $C_b(\R^d)$ is the set of continuous and bounded functions defined on $\R^d$.

We define $\PP_2(\R^d):=\{u\in\PP(\R^d):  \int_{\R^d} |x|^2 \, \d u(x)<+\infty \}$
the set of Borel probability measure with finite second moment.
The Wasserstein distance {\RRR W} in $\PP_2(\R^d)$ is defined as
\begin{equation}\label{Kanto}
W(u,v):=\min_{\gamma\in\PP({\R^d\times\R^d})}\left\{\left(\int_{\R^d\times\R^d}\!\!\!\!\!|x-y|^2\,\d\gamma(x,y)\right)^{1/2}:
\,(\pi_1)_\#\gamma=u,\,(\pi_2)_\#\gamma=v
\right\}
\end{equation}
where $\pi_i$, $i=1,2$, denote the canonical projections on the first and second factor respectively.
Denoting by $\id$ the identity map in $\R^d$, when $u$ is absolutely continuous with respect to the Lebesgue measure, 
the minimum problem~\eqref{Kanto}
has a unique solution $\gamma$ induced by a transport map $T_u^v$ in the following way:
$\gamma=(\id,T_u^v)_\#u$.
In particular, $T_u^v$ is the unique solution of the Monge optimal transport problem
$$
\min_{S:\R^d\to\R^d}\left\{\int_{\R^d}|S(x)-x|^2\d u(x):\ S_\#u=v\right\}.
$$
 Finally, we recall that if also
$v$ is absolutely continuous with respect to Lebesgue measure, then
\begin{equation*}\label{ot}
T_v^u\circ T_u^v=\id\quad\hbox{\rm $u$-a.e.}
\quad\hbox{\rm and}\quad
T_u^v\circ T_v^u=\id\quad\hbox{\rm $v$-a.e.}
\end{equation*}

The function $W:\PP_2(\R^d)\times\PP_2(\R^d)\to\R$ is a distance and the metric space $(\PP_2(\R^d),W)$ is complete and separable.
Moreover the distance $W$ is sequentially lower semi continuous with respect to the narrow convergence, i.e.,
$$ u_n\to u,\quad v_n\to v, \mbox{ narrowly } \Longrightarrow \liminf_{n\to+\infty}W(u_n,v_n)\geq W(u,v),$$
and bounded sets in $(\PP_2(\R^d),W)$ are narrowly sequentially relatively compact.

\subsection{Fourier transform and fractional Sobolev spaces}\label{SobolevHomSpace}
We denote by $\SS(\R^d)$ the Schwartz space of smooth functions with rapid decay at infinity and by $\SS'(\R^d)$ the dual space of tempered distributions.
The Fourier transform of $u\in \SS(\R^d)$ is defined by 
$ \hat u(\xi):= \int_{\mathbb{R}^d}e^{-ix\cdot\xi}u(x)\,\d x. $
The Fourier transform is an automorphism of $\SS(\R^d)$ and by transposition it can be defined on $\SS'(\R^d)$.
Moreover the  Plancherel formula holds
$$ \int_{\R^d} \hat u(\xi)\overline{\hat w(\xi)}\,\d\xi =  (2\pi)^d\int_{\R^d} u(x) w(x)\,\d x,\qquad \forall u,w\in L^2(\R^d).$$

Let $r\in\R$.
For every tempered distribution $u\in\SS'(\R^d)$ such that $\hat u \in L^1_{loc}(\R^d)$,
we define
\[
\|u \|^2_{H^{r}(\R^d)} := \frac1{(2\pi)^d}\int_{\mathbb{R}^d}(1+|\xi|^2)^{r}|\hat u (\xi)|^2\,\d\xi
\]
and
\[
\|u \|^2_{\dot H^{r}(\R^d)} := \frac1{(2\pi)^d}\int_{\mathbb{R}^d}|\xi|^{2r}|\hat u (\xi)|^2\,\d\xi.
\]
The fractional Sobolev space $H^{r}(\R^d)$ is defined by
$$H^{r}(\R^d):=\{u\in\SS'(\R^d): \hat u \in L^1_{loc}(\R^d) ,\; \|u \|_{H^{r}(\R^d)}<+\infty \},$$
and the homogenous fractional Sobolev space  $\dot H^{r}(\R^d)$ is defined by 
$$\dot H^{r}(\R^d):=\{u\in\SS'(\R^d): \hat u \in L^1_{loc}(\R^d) ,\; \|u \|_{\dot H^{r}(\R^d)}<+\infty \}.$$
The next proposition summarizes some basic facts about fractional Sobolev spaces, which will be used many times in the sequel. 
We refer  for instance to \cite[Sections 1.3, 1.4]{BCD}.

\begin{proposition}\label{prop:Sobolev}
The following assertions hold.
\begin{itemize}

\item 
Interpolation. If $r_0<r_1<r_2$ then
$$ \|u \|_{H^{r_1}(\R^d)}\leq \|u \|^{1-\theta}_{H^{r_0}(\R^d)} \|u \|^\theta_{H^{r_2}(\R^d)}\quad\text{and}
\quad \|u \|_{\dot H^{r_1}(\R^d)}\leq \|u \|^{1-\theta}_{\dot H^{r_0}(\R^d)} \|u \|^\theta_{\dot H^{r_2}(\R^d)}, $$
where $\theta$ is defined by $r_1=(1-\theta)r_0+\theta r_2$.
\item
If $r_1<r_2$ then $\|u \|_{H^{r_1}(\R^d)} \leq \|u \|_{H^{r_2}(\R^d)}$.
If $r>0$ then  $\|u \|_{\dot H^{r}(\R^d)} \leq \|u \|_{H^{r}(\R^d)}$.
If $r<0$ then  $\|u \|_{H^{r}(\R^d)} \leq \|u \|_{\dot H^{r}(\R^d)}$.
If $r=0$ then  $\|u \|_{\dot H^{0}(\R^d)} = \|u \|_{H^{0}(\R^d)}= \|u \|_{L^{2}(\R^d)}$.
\item
If $\phi\in \SS(\R^d)$ and $u\in H^r(\R^d)$ then there exists a constant $c$, depending only on $\phi$, $r$ and $d$, such that
$$\|\phi \, u \|_{H^{r}(\R^d)} \leq c\|u \|_{H^{r}(\R^d)}.$$
\item
If $\phi\in\SS(\R^d)$,  $r_1<r_2$ and $\sup_{n\in\N}\|u_n \|_{H^{r_2}(\R^d)}<+\infty$, then
$\{\phi \, u_n:n\in\N  \}$ is relatively compact in $H^{r_1}(\R^d)$.
\end{itemize}
\end{proposition}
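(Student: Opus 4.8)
The plan is to prove all four assertions by passing to the Fourier side, where each reduces to an elementary weighted-$L^2$ estimate.

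\textbf{Interpolation and embeddings.} For the first two items I would argue pointwise in $\xi$. Writing $r_1=(1-\theta)r_0+\theta r_2$, one has $(1+|\xi|^2)^{r_1}=\big((1+|\xi|^2)^{r_0}\big)^{1-\theta}\big((1+|\xi|^2)^{r_2}\big)^{\theta}$, hence
\[
(1+|\xi|^2)^{r_1}|\hat u(\xi)|^2=\Big((1+|\xi|^2)^{r_0}|\hat u(\xi)|^2\Big)^{1-\theta}\Big((1+|\xi|^2)^{r_2}|\hat u(\xi)|^2\Big)^{\theta};
\]
integrating in $\xi$ and applying H\"older's inequality with exponents $1/(1-\theta)$ and $1/\theta$ yields the interpolation inequality for $H^{r}$, and replacing $(1+|\xi|^2)^{r}$ by $|\xi|^{2r}$ gives the homogeneous version. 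The remaining comparisons follow from the pointwise bounds $(1+|\xi|^2)^{r_1}\le(1+|\xi|^2)^{r_2}$ when $r_1<r_2$ (since $1+|\xi|^2\ge1$), $|\xi|^{2r}\le(1+|\xi|^2)^{r}$ when $r>0$, and $(1+|\xi|^2)^{r}\le|\xi|^{2r}$ when $r<0$; the case $r=0$ is Plancherel's formula.

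\textbf{Multiplication by a Schwartz function.} Here I would use $\widehat{\phi u}=(2\pi)^{-d}\hat\phi*\hat u$ together with Peetre's inequality $(1+|\xi|^2)^{r}\le 2^{|r|}(1+|\xi-\eta|^2)^{|r|}(1+|\eta|^2)^{r}$, which gives
\[
(1+|\xi|^2)^{r/2}\big|\widehat{\phi u}(\xi)\big|\le \frac{2^{|r|/2}}{(2\pi)^d}\int_{\R^d} (1+|\xi-\eta|^2)^{|r|/2}|\hat\phi(\xi-\eta)|\,(1+|\eta|^2)^{r/2}|\hat u(\eta)|\,\d\eta.
\]
The right-hand side is a constant times the convolution of $\eta\mapsto(1+|\eta|^2)^{|r|/2}|\hat\phi(\eta)|$, which lies in $L^1(\R^d)$ since $\hat\phi\in\SS(\R^d)$, with $\eta\mapsto(1+|\eta|^2)^{r/2}|\hat u(\eta)|$, whose $L^2$ norm equals $(2\pi)^{d/2}\|u\|_{H^r(\R^d)}$. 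Young's convolution inequality then gives $\|\phi u\|_{H^r(\R^d)}\le c\,\|u\|_{H^r(\R^d)}$, with $c$ depending only on $d$, $r$ and the above $L^1$ norm, hence only on $d$, $r$ and $\phi$.

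\textbf{Compactness.} By the previous step $\{\phi u_n\}$ is bounded in $H^{r_2}(\R^d)$, say by $M$. The high-frequency tail is controlled uniformly: for $R>0$,
\[
\frac1{(2\pi)^d}\int_{|\xi|>R}(1+|\xi|^2)^{r_1}|\widehat{\phi u_n}(\xi)|^2\,\d\xi\le (1+R^2)^{r_1-r_2}\|\phi u_n\|_{H^{r_2}(\R^d)}^2\le (1+R^2)^{r_1-r_2}M^2,
\]
which tends to $0$ as $R\to\infty$, uniformly in $n$, since $r_1-r_2<0$. For fixed $R$ it then suffices to show that $\{\widehat{\phi u_n}|_{B_R}\}$ is relatively compact in $L^2(B_R)$. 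Writing $\widehat{\phi u_n}(\xi)=(2\pi)^{-d}\int_{\R^d}\hat\phi(\xi-\eta)\,\hat u_n(\eta)\,\d\eta$ and using that $\hat\phi(\xi-\cdot)(1+|\cdot|^2)^{-r_2/2}$ belongs to $L^2(\R^d)$ with norm bounded uniformly for $\xi\in B_R$ and depending continuously on $\xi$ (because $\hat\phi$ decays faster than any polynomial, irrespective of the sign of $r_2$), Cauchy--Schwarz shows that $\{\widehat{\phi u_n}\}$ is uniformly bounded and equicontinuous on $\overline{B_R}$. Arzel\`a--Ascoli yields uniform convergence of a subsequence, hence convergence in $L^2(B_R)$; a diagonal argument over $R\to\infty$ together with the tail estimate produces a subsequence that is Cauchy in $H^{r_1}(\R^d)$.

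\textbf{Main obstacle.} The only genuinely non-routine points are the multiplication estimate and the compactness statement: the former hinges on Peetre's inequality to absorb the frequency shift produced by convolution with $\hat\phi$ into the rapidly decaying factor, and the latter on the fact that convolving with a Schwartz function regularizes $\hat u_n$ enough on bounded frequency sets to apply Arzel\`a--Ascoli even when $r_2<0$. Everything else is direct manipulation of weighted $L^2$ integrals.
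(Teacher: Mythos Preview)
Your proof is correct and complete. The paper itself does not provide a proof of this proposition: it simply states the result as a summary of ``basic facts about fractional Sobolev spaces'' and refers the reader to \cite[Sections~1.3,~1.4]{BCD}. Your argument is precisely the standard one found in such references --- H\"older on the Fourier side for interpolation, Peetre's inequality plus Young for the multiplier estimate, and a tail estimate combined with Arzel\`a--Ascoli on bounded frequency balls for the compactness --- so there is no substantive discrepancy, only the difference that you have written out in full what the paper leaves to a citation.
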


Let ${\RRR d\ge 1}$ and $r\in (0,d/2)$. Then  the fractional Sobolev inequality  holds
\begin{equation}\label{fractionalembedding}
	\|u\|_{L^{q}(\mathbb{R}^d)}\le S_{d,r}\|u\|_{\dot H^{r}(\mathbb{R}^d)} 
\end{equation}
for any $u\in \dot H^{r}(\mathbb{R}^d)$, where $q:=\frac{2d}{d-2r} >2$ and, see for instance \cite{CT},
\begin{equation} \label{best}
S_{d,r}=2^{-2r}\pi^{-r}\frac{\Gamma(d/2-r)}{\Gamma(d/2+r)}\left(\frac{\Gamma(d)}{\Gamma(d/2)}\right)^{2r/d}.
\end{equation}
 From  \eqref{fractionalembedding} and interpolation {\RRR of $L^p$ norms} we obtain that
for  $q_1,q_2$ such that $1\le q_1< q_2<q=\frac{2d}{d-2r}$, the inequality
\begin{equation*}\label{FractionalGNS1}
	\|u\|_{L^{q_2}(\mathbb{R}^d)}\le S_{d,r}^\theta\|u\|^{1-\theta}_{L^{q_1}(\mathbb{R}^d)}\|u\|^\theta_{\dot H^{r}(\mathbb{R}^d)} 
\end{equation*}
holds for any $u\in \dot H^{r}(\mathbb{R}^d)\cap L^{q_1}(\R^d)$, where $\theta=\frac{(q_1-q_2)q}{(q_1-q)q_2}$.
In particular,   for  any $u\in \dot H^{r}(\mathbb{R}^d)\cap L^1(\R^d)$ and  ${\RRR q_2}=2+\frac{2r}{d} $ there holds
\begin{equation}\label{FractionalGNS}
	\|u\|_{L^{q_2}(\mathbb{R}^d)}^{\RRR q_2}\le S_{d,r}^2\|u\|^{2r/d}_{L^{1}(\mathbb{R}^d)}\|u\|^2_{\dot H^{r}(\mathbb{R}^d)}. 
\end{equation}
{\RRR
Similarly, from \eqref{fractionalembedding} and the interpolation of $L^p$ norms between the exponents $1<p<\frac{d(p+1)}{d-2r}$, for $p\in(1,+\infty) $ and nonnegative $u\in L^1(\mathbb{R}^d)$ such that $u^{(p+1)/2}\in\dot H^r(\mathbb{R}^d)$ we have
\begin{equation}\label{GNS}
\|u\|_{L^p(\mathbb{R}^d)}^{p+1}\le S_{d,r}^{2\theta}\|u\|_{L^1(\mathbb{R}^d)}^{(1-\theta)(p+1)}\|u^{(p+1)/2}\|_{\dot H^r(\mathbb{R}^d)}^{2\theta},
\end{equation} 
where $\theta=\frac{d(p^2-1)}{p(2r+dp)}$.}

{\RRR
In dimension $d=1$, for  $s\in(0,1/2)$, we shall also need the following  inequalities:
\begin{equation}
\label{eq:Bs}
\|u\|_{L^{4-2s}(\mathbb{R})}^{4-2s}\le S_{1,\frac{1-s}{4-2s}}^{2-2s}\|u\|_{L^1(\mathbb{R})}^{2-2s}\|u\|^2_{\dot H^{1-s}(\mathbb{R})},
\end{equation}
and
\begin{equation}\label{extrasobolev1}
\|u\|_{L^p(\mathbb{R})}^{p\beta_p}\le S_{1,\frac{1-s}{4-2s}}^{8-4s}\|u\|_{L^1(\mathbb{R})}^{(2p-2sp+1)/(p-1)}\|u^{(p+1)/2}\|^2_{\dot H^{1-s}(\mathbb{R})},
\end{equation}
 where $\beta_p=\frac{2(1-s)+p}{p-1}$ and $p\in (1,+\infty)$. Indeed, by \eqref{fractionalembedding} and the interpolation property of Proposition \ref{prop:Sobolev} we have
 \begin{equation}
\label{eq:r_interpol}
\|u\|_{L^{\frac2{1-2r}}(\mathbb{R})}\le S_{1,r}\|u\|_{\dot H^r(\mathbb{R})}\le S_{1,r} \|u\|_{L^2(\mathbb{R})}^{(1-s-r)/(1-s)}\|u\|^{r/(1-s)}_{\dot H^{1-s}(\mathbb{R})}
\end{equation}  for every $r,s\in(0,1/2)$.
Choosing $r=\frac{1-s}{4-2s}$ in \eqref{eq:r_interpol} and interpolating the $L^2$ norm between $L^1$ and $L^{\frac2{1-2r}}$ we obtain \eqref{eq:Bs},
whereas similar interpolation arguments and \eqref{GNS} entail \eqref{extrasobolev1}.
}

If  ${\RRR d\ge 1}$ and {\RRR$ r\in (0,1)$, the scalar product in the  space  $\dot H^r(\mathbb{R}^d)$,} defined by 
\[
\langle v,w\rangle_{r}:= \frac1{(2\pi)^d}\int_{\mathbb{R}^d}|\xi|^{2r}\hat v(\xi)\overline{\hat w(\xi)}\,\d\xi,
\]
 can also be expressed as 
\begin{equation}\label{bilinearform}
\langle v,w\rangle_{r} =\bar C_{d,r}\int_{\R^d}\int_{\R^d} (v(x)-v(y))(w(x)-w(y))|x-y|^{-d-2r}\,\d x\,\d y.
\end{equation}
This equivalence follows from \cite[Proposition 1.37]{BCD}. The value of the positive constant  $\bar C_{d,r}$ can be obtained through the following formal computation. 
Since the Riesz kernel satisfies $\Delta K_r=-K_{r-1}$, using  Plancherel formula and integration by parts we have
\[
\begin{aligned}
\langle v,w \rangle_r&=\frac{1}{(2\pi)^d}\int_{\mathbb{R}^d}|\xi|^{-2(1-r)}|\xi|^2 \hat v(\xi)\overline{\hat w(\xi)}\,\d\xi
=\int_{\mathbb{R}^d}(K_{1-r}\ast \nabla v)(x)\cdot \nabla w(x)\,\d x\\&
={\RRR\frac12} \int_{\mathbb{R}^d}\int_{\mathbb{R}^d}(\Delta K_{1-r}(x-y))\,(v(x)-v(y))\,(w(x)-w(y))\,\d x\d y\\
&=-{\RRR\frac12}C_{d,-r} \int_{\mathbb{R}^d}\int_{\mathbb{R}^d}|x-y|^{-2r-d}(v(x)-v(y))(w(x)-w(y))\,\d x\d y,
\end{aligned}
\]
thus \eqref{bilinearform} holds with $\bar C_{d,r}=-{\RRR\frac12}C_{d,-r}$,
where $C_{d,-r}<0$ is given by  extending formula \eqref{kernelconstant} to values  of the second index in $(-1,0)$. 

We also have the following
%

\begin{proposition}\label{prop:BB}
Let ${\RRR d\ge 1}$ and ${\RRR r\in(0,1)}$.
Let $v\in\dot H^r(\mathbb{R}^d)$. If $F:\R\to\R$ is nondecreasing, then $\langle v,F(v)\rangle_r\geq 0$.
If, in addition, $F$ is Lipschitz continuous on $\mathbb{R}^d$, then $F\circ v\in \dot H^{r}(\mathbb{R}^d)$ and there hold $$\langle v,F(v)\rangle_r\leq L\langle v,v\rangle_r,\quad  \langle F(v),F(v)\rangle_r\leq L\langle F(v),v\rangle_r,$$ where $L$ is the Lipschitz constant of $F$.
If moreover
 $v$ is nonnegative and 
 $p\in (1,+\infty)$,   the  following  {Stroock-Varopoulos inequality} holds
\begin{equation}\label{sv}\langle v,v^p\rangle_r \geq \frac{4p}{(p+1)^2}\|v^{(p+1)/2}\|^2_{\dot H^{r}(\R^d)}.\end{equation}

\end{proposition}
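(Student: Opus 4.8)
The plan is to prove the three assertions of Proposition \ref{prop:BB} using the bilinear representation \eqref{bilinearform}, since all the claimed inequalities become elementary pointwise inequalities once integrated against the positive kernel $|x-y|^{-d-2r}$. Throughout write $a=v(x)$, $b=v(y)$ and $d\mu(x,y)=\bar C_{d,r}|x-y|^{-d-2r}\,\d x\,\d y$, a positive measure, so that $\langle v,w\rangle_r=\int\int (v(x)-v(y))(w(x)-w(y))\,\d\mu$.

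\emph{First claim: $\langle v,F(v)\rangle_r\geq 0$ for $F$ nondecreasing.} The integrand is $(a-b)(F(a)-F(b))$, which is $\geq 0$ pointwise precisely because $F$ is nondecreasing. Hence integrating against $\d\mu\geq 0$ gives $\langle v,F(v)\rangle_r\geq 0$. (To be careful, one should first note $F(v)\in\dot H^r$ under the stated hypotheses; in the first statement $F$ is merely nondecreasing, so the quantity $\langle v,F(v)\rangle_r$ should be read as the — possibly $+\infty$ — value of the manifestly nonnegative double integral; when $F$ is also Lipschitz, finiteness follows from the next step.)

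\emph{Second claim: Lipschitz $F$.} If $F$ has Lipschitz constant $L$, then $|F(a)-F(b)|\leq L|a-b|$, so $|F(v(x))-F(v(y))|^2\leq L^2|v(x)-v(y)|^2$ pointwise; integrating against $\d\mu$ shows $F\circ v\in\dot H^r(\mathbb{R}^d)$ with $\langle F(v),F(v)\rangle_r\leq L^2\langle v,v\rangle_r<+\infty$. For the first inequality, since $F$ is nondecreasing we have $0\leq (a-b)(F(a)-F(b))\leq L(a-b)^2$ pointwise (the upper bound uses $|F(a)-F(b)|\leq L|a-b|$ together with the fact that $(a-b)$ and $(F(a)-F(b))$ have the same sign), and integrating gives $\langle v,F(v)\rangle_r\leq L\langle v,v\rangle_r$. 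For the second inequality, estimate $(F(a)-F(b))^2\leq L|a-b|\,|F(a)-F(b)|=L(a-b)(F(a)-F(b))$ pointwise (again using the matching signs), and integrate to get $\langle F(v),F(v)\rangle_r\leq L\langle F(v),v\rangle_r$.

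\emph{Third claim: Stroock--Varopoulos.} Here $v\geq 0$ and $F(t)=t^p$ on $[0,+\infty)$ (extended arbitrarily, say by $0$, on the negative axis — but $v\geq0$ so only values on $[0,\infty)$ matter). The integrand of $\langle v,v^p\rangle_r$ is $(a-b)(a^p-b^p)$ and that of $\|v^{(p+1)/2}\|^2_{\dot H^r}$ is $(a^{(p+1)/2}-b^{(p+1)/2})^2$, both against $\d\mu$. So it suffices to prove the pointwise inequality
\begin{equation*}
(a-b)(a^p-b^p)\geq \frac{4p}{(p+1)^2}\bigl(a^{(p+1)/2}-b^{(p+1)/2}\bigr)^2\qquad\text{for all }a,b\geq 0.
\end{equation*}
This is a one-variable calculus inequality: by homogeneity (both sides scale like the $(p+1)$-th power) one may reduce to $b=1$, or one can prove it directly by writing each side as an integral, namely $a-b=\int_b^a\,\d t$, $a^p-b^p=\int_b^a p t^{p-1}\,\d t$, $a^{(p+1)/2}-b^{(p+1)/2}=\int_b^a \tfrac{p+1}{2}t^{(p-1)/2}\,\d t$, and applying the Cauchy--Schwarz inequality $\bigl(\int_b^a t^{(p-1)/2}\,\d t\bigr)^2\leq (a-b)\int_b^a t^{p-1}\,\d t$. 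Indeed Cauchy--Schwarz gives $\bigl(\int_b^a t^{(p-1)/2}\,\d t\bigr)^2\le \bigl(\int_b^a 1\,\d t\bigr)\bigl(\int_b^a t^{p-1}\,\d t\bigr)$, hence $\bigl(a^{(p+1)/2}-b^{(p+1)/2}\bigr)^2=\tfrac{(p+1)^2}{4}\bigl(\int_b^a t^{(p-1)/2}\,\d t\bigr)^2\le \tfrac{(p+1)^2}{4}(a-b)\cdot\tfrac1p(a^p-b^p)=\tfrac{(p+1)^2}{4p}(a-b)(a^p-b^p)$, which rearranges to the claim. Integrating this pointwise inequality against $\d\mu$ yields \eqref{sv}.

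The steps are all routine; the only point requiring a little care is the reduction to \eqref{bilinearform}, which is legitimate since that identity is established in the excerpt for all $r\in(0,1)$ with a positive constant $\bar C_{d,r}$, and the justification that $F\circ v$ (resp. $v^{(p+1)/2}$) is a tempered distribution with locally integrable Fourier transform so that the $\dot H^r$ seminorm is well defined — this follows from the finiteness of the double integral together with \eqref{bilinearform} applied in reverse. I expect the pointwise Cauchy--Schwarz inequality for the Stroock--Varopoulos bound to be the ``main'' computation, but it is entirely elementary.
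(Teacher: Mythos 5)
Your proof is correct and follows essentially the same route as the paper: both reduce to the bilinear representation \eqref{bilinearform} with the positive kernel, prove the claims via elementary pointwise inequalities in $a=v(x)$, $b=v(y)$, and in particular the paper cites verbatim the elementary inequality $(a-b)(a^p-b^p)\ge\frac{4p}{(p+1)^2}(a^{(p+1)/2}-b^{(p+1)/2})^2$ that you prove via Cauchy--Schwarz. The paper's proof is terser (it does not spell out the Cauchy--Schwarz argument for the pointwise bound nor the sign-matching step in the Lipschitz case), so your version simply supplies details the paper leaves to the reader.
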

{\RRR
\begin{proof}
 The first properties follow at once from the representation \eqref{bilinearform}. 
  \eqref{sv} is also a consequence of \eqref{bilinearform}, by means of the elementary inequality
 \[
 (a-b)(a^p-b^p)\ge\frac{4p}{(p+1)^2}\left(a^{(p+1)/2}-b^{(p+1)/2}\right)^2,
 \]
 holding for any couple of nonnegative numbers $a$, $b$.
\end{proof}
\RRR}

\section{Energy functional and first convergence result}\label{section:functional}

{\RRR From here on it will be always assumed that $d\ge 1$ and $0<s<\min\{1, \frac{d}{2} \}$.}

\subsection{Energy functional}
After noticing that a Borel probability measure $u$ is a tempered distribution with $\hat u$ in $L^1_{loc}(\R^d)$, we
define the energy functional
$\FF_s:\PP_2(\R^d)\to (-\infty,+\infty]$
by
\begin{equation*}\label{functional}
\FF_s(u):=\frac12\|u\|^2_{\dot H^{-s}(\mathbb{R}^d)}
\end{equation*}
We state a basic property of functional $\FF_s$.
\begin{proposition}\label{prop:FP}
The following assertions hold.
\begin{itemize}
\item $D(\FF_s) = \dot H^{-s}(\R^d)\cap\PP_2(\R^d)$.
\item $\FF_s(u)\geq 0$ for every $u\in\PP_2(\R^d)$.
\item $\FF_s$ is sequentially lower semicontinuous w.r.t. the narrow convergence.
\end{itemize}
\end{proposition}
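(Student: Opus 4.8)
The plan is to handle the three items in order; all of them follow from the definitions once one observes that for a Borel probability measure the Fourier transform is a bounded continuous function. First I would record that for $u\in\PP(\R^d)$ the formula $\hat u(\xi)=\int_{\R^d}e^{-ix\cdot\xi}\,\d u(x)$ defines, by dominated convergence, a continuous function with $|\hat u(\xi)|\le 1$ for all $\xi$; in particular $\hat u\in L^1_{loc}(\R^d)$, so that $\FF_s(u)=\frac1{2(2\pi)^d}\int_{\R^d}|\xi|^{-2s}|\hat u(\xi)|^2\,\d\xi$ is a well defined element of $[0,+\infty]$ for every $u\in\PP_2(\R^d)$.

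Granting this, the first item is immediate: by the definition of the homogeneous Sobolev norm, $\FF_s(u)<+\infty$ is equivalent to $\|u\|_{\dot H^{-s}(\R^d)}<+\infty$, i.e. (since $\hat u\in L^1_{loc}$ holds automatically) to $u\in\dot H^{-s}(\R^d)$, whence $D(\FF_s)=\{u\in\PP_2(\R^d):\FF_s(u)<+\infty\}=\dot H^{-s}(\R^d)\cap\PP_2(\R^d)$. The second item is also immediate, the integrand $|\xi|^{-2s}|\hat u(\xi)|^2$ being nonnegative, so that $\FF_s(u)\ge0$ regardless of whether $u\in D(\FF_s)$.

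For the third item I would argue as follows. Let $u_n\to u$ narrowly. Testing the definition of narrow convergence against the bounded continuous functions $x\mapsto\cos(x\cdot\xi)$ and $x\mapsto\sin(x\cdot\xi)$ shows that $\hat u_n(\xi)\to\hat u(\xi)$ for every fixed $\xi\in\R^d$, hence $|\hat u_n(\xi)|^2\to|\hat u(\xi)|^2$ pointwise in $\xi$. Fatou's lemma applied to the nonnegative functions $\xi\mapsto|\xi|^{-2s}|\hat u_n(\xi)|^2$ then yields
\[
\int_{\R^d}|\xi|^{-2s}|\hat u(\xi)|^2\,\d\xi\le\liminf_{n\to\infty}\int_{\R^d}|\xi|^{-2s}|\hat u_n(\xi)|^2\,\d\xi,
\]
that is $\FF_s(u)\le\liminf_{n\to\infty}\FF_s(u_n)$, which is the desired lower semicontinuity.

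I do not expect a genuine obstacle here; the only point that deserves a line of care is the passage from narrow convergence to pointwise convergence of the Fourier transforms, after which Fatou does all the work. As an alternative for the third item one could instead use the interaction-energy representation $\FF_s(u)=\frac12\iint_{\R^d\times\R^d}K_s(x-y)\,\d u(x)\,\d u(y)$ together with the lower semicontinuity of the integral of the nonnegative lower semicontinuous kernel $(x,y)\mapsto K_s(x-y)$ under the narrow convergence $u_n\otimes u_n\to u\otimes u$ on $\R^d\times\R^d$; the Fourier/Fatou route is shorter, since it bypasses the need to justify that representation as an identity in $[0,+\infty]$.
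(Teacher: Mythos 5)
Your handling of the first two items matches the paper's (the paper simply declares them ``obvious''); the short justification you give — boundedness and continuity of $\hat u$ for a probability measure, hence $\hat u\in L^1_{loc}$ and the integral is a well-defined element of $[0,+\infty]$ — is correct and worth recording.

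For the third item your route is correct but genuinely different from the paper's. The paper fixes a sequence with $\sup_n\FF_s(u_n)<+\infty$, sets $U_n(\xi)=|\xi|^{-s}\hat u_n(\xi)$, extracts a weakly $L^2$-convergent subsequence, identifies the weak limit with $|\xi|^{-s}\hat u(\xi)$ through the pointwise convergence $\hat u_n(\xi)\to\hat u(\xi)$ coming from narrow convergence, and then invokes weak lower semicontinuity of the $L^2$ norm. You instead go straight from the same pointwise convergence to Fatou's lemma on the nonnegative integrands $\xi\mapsto|\xi|^{-2s}|\hat u_n(\xi)|^2$, which yields the $\liminf$ inequality in one step and without any preliminary reduction to bounded energy sequences or any compactness argument. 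Your argument is shorter and slightly more elementary (it needs only Fatou, not weak $L^2$ compactness and uniqueness of weak limits), and it handles the case $\liminf\FF_s(u_n)=+\infty$ automatically; what the paper's route buys is a pattern (weak compactness + identification of the limit) that recurs later in the paper, e.g.\ in passing to limits in the discrete scheme, so its appearance here is partly for uniformity of method. Both proofs are valid.
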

\begin{proof}
The first two points are obvious. In order to prove the third one, 
let $\{u_n\}\subset \PP_2(\mathbb{R}^d)$ be a sequence, narrowly converging to $u\in\PP_2(\mathbb{R}^d)$, and such that $\sup_n\FF_s(u_n)<+\infty$.
Using the notation $U_n(\xi):=|\xi|^{-s}\hat u_n(\xi)$, the previous assumption reads as $\sup_n\|U_n\|_{L^2(\R^d)}<+\infty$.
By $L^2$ weak compactness there exists a subsequence of $\{U_n\}$ that weakly converges in $L^2(\R^d)$ to some $U\in L^2(\R^d)$.
By the narrow convergence of $u_n$ we have that $\hat u_n(\xi) \to \hat u(\xi)$ for every $\xi\in\R^d$, and then
 $U_n(\xi)\to  |\xi|^{-s} \hat u (\xi)$ for every $\xi\in\R^d$.
By uniqueness of the weak limits and the lower semicontinuity of the $L^2$ norm we obtain that
 $\FF_s(u)\le\liminf_{n\to\infty}\FF_s(u_n)$ and the  statement holds.
\end{proof}

\subsection{Wasserstein gradient flow, minimizing movements}\label{subsection:MM}

Let  $u_0\in\PP_2(\mathbb{R}^d)$, $\tau>0$.
We let 
\begin{equation}\label{gaussian}
\Gamma_t(x):=\frac{1}{(4\pi t)^{d/2}}e^{-|x|^2/4t},\quad x\in\mathbb{R}^d,\;t>0
\end{equation}
and we define a regularized initial datum as
 \begin{equation}\label{utau0}
 u_\tau^0:=\Gamma_{\omega(\tau)}\ast u_0,\qquad\text{where}\quad \omega(\tau):=\begin{cases} -1/\ln \tau &\mbox{if }\tau\in(0,1/2) \\
-1/\ln (1/2) & \mbox{if }\tau\in[1/2,+\infty).
\end{cases}
 \end{equation}
%
We consider, for $k=1,2,\ldots$, the  problem
\begin{equation}\label{minmov}
\min_{u\in\PP_2(\mathbb{R}^d)} \FF_s(u)+\frac1{2\tau}\,W^2(u,u^{k-1}_\tau).
\end{equation}
\begin{proposition}\label{prop:existenceMM}
For every $\tau>0$ and every $u_0\in \PP_2(\R^d)$ there exists a unique sequence
$\{u_\tau^k:k=0,1,2,\ldots\}\subset D(\FF_s)$
satisfying $u_\tau^0=\Gamma_{\omega(\tau)}\ast u_0$ and such that $u_\tau^k$ is a solution to problem \eqref{minmov} for $k=1,2,\ldots$.
\end{proposition}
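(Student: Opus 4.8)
The plan is to prove Proposition~\ref{prop:existenceMM} by establishing, for each fixed step $k\geq 1$, that the variational problem~\eqref{minmov} admits a unique minimizer, starting from the regularized datum $u_\tau^0=\Gamma_{\omega(\tau)}\ast u_0$, which indeed belongs to $D(\FF_s)$ since convolution with a Gaussian preserves probability measures, keeps the second moment finite, and only decreases $\dot H^{-s}$ norms (in Fourier variables the Gaussian multiplier $e^{-\omega(\tau)|\xi|^2}$ is bounded by $1$). The argument is the classical one for minimizing movements (cf.\ \cite{JKO,AGS}): existence by the direct method and uniqueness by strict convexity along generalized geodesics. I will run the induction on $k$, assuming $u_\tau^{k-1}\in D(\FF_s)\subset\PP_2(\R^d)$.

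First I would set $\Phi_k(u):=\FF_s(u)+\frac{1}{2\tau}W^2(u,u_\tau^{k-1})$ and check that $\inf_{\PP_2}\Phi_k$ is finite: it is nonnegative by Proposition~\ref{prop:FP}, and finite because $\Phi_k(u_\tau^{k-1})=\FF_s(u_\tau^{k-1})<+\infty$. Take a minimizing sequence $\{u_n\}$. Since $\Phi_k(u_n)$ is bounded, $W^2(u_n,u_\tau^{k-1})$ is bounded, hence $\{u_n\}$ is bounded in $(\PP_2(\R^d),W)$; by the compactness statement recalled in Section~\ref{Sub:Wasserstein}, a subsequence converges narrowly to some $\bar u\in\PP_2(\R^d)$. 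Now I invoke lower semicontinuity with respect to narrow convergence of both terms: $\FF_s$ is narrowly lower semicontinuous by Proposition~\ref{prop:FP}, and $W^2(\cdot,u_\tau^{k-1})$ is narrowly lower semicontinuous by the semicontinuity property of $W$ stated in Section~\ref{Sub:Wasserstein} (using that the constant sequence $u_\tau^{k-1}$ trivially converges to itself). Therefore $\Phi_k(\bar u)\leq\liminf_n\Phi_k(u_n)=\inf\Phi_k$, so $\bar u$ is a minimizer, and $\bar u\in D(\FF_s)$ because $\FF_s(\bar u)<+\infty$ and $\bar u\in\PP_2(\R^d)$.

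For uniqueness I would argue by strict convexity of $\Phi_k$ along generalized geodesics emanating from $u_\tau^{k-1}$, in the spirit of \cite[Chapters 9--10]{AGS}. The quadratic term $u\mapsto\frac{1}{2\tau}W^2(u,u_\tau^{k-1})$ is $\frac{1}{\tau}$-convex along such geodesics. The energy $\FF_s$ is convex along generalized geodesics: writing $\FF_s(u)=\frac12\langle K_s\ast u,u\rangle$ with $K_s$ a positive-definite (Riesz) kernel, and using the representation $\FF_s(u)=\frac12\int\int K_s(x-y)\,du(x)\,du(y)$, one checks that along a generalized geodesic $u_t=((1-t)T^0+tT^1)_\#u_\tau^{k-1}$ the map $t\mapsto\FF_s(u_t)$ is a quadratic polynomial with nonnegative leading coefficient, because the second derivative equals $\int\int K_s$-type pairings of the displacement field against itself, which is $\geq 0$ by positivity of $\hat K_s=|\xi|^{-2s}>0$ (this is precisely where one passes to Fourier variables). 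Hence $\Phi_k$ is strictly convex along generalized geodesics, and a strictly convex functional has at most one minimizer. This closes the induction and yields the unique sequence $\{u_\tau^k\}_{k\geq 0}$.

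The main obstacle is the convexity of $\FF_s$ along generalized geodesics: unlike internal energies, the interaction energy with kernel $K_s$ is not automatically displacement convex, so one must genuinely use that $K_s$ is a convolution kernel with positive Fourier symbol and compute the second variation carefully, checking it is a polynomial of degree at most two with nonnegative quadratic coefficient. An alternative, should one wish to avoid the generalized-geodesic machinery, is to note that since $u_\tau^{k-1}$ may a priori not be absolutely continuous one cannot directly interpolate via a single optimal map; the generalized geodesic construction (using optimal plans from $u_\tau^{k-1}$ to two competitors and gluing) is exactly what repairs this, and this is the technical point to present with care.
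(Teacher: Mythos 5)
Your existence argument is correct and matches the paper's: nonnegativity, narrow compactness of sublevels, and lower semicontinuity of both terms yield a minimizer by the direct method, and the limit measure is automatically in $D(\FF_s)$.

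The uniqueness step, however, contains a genuine error. You claim that along a generalized geodesic $u_t=((1-t)T^0+tT^1)_\#u_\tau^{k-1}$ the map $t\mapsto\FF_s(u_t)$ is a quadratic polynomial whose leading coefficient is nonnegative by positivity of $\hat K_s$. This is false: $u_t$ is a pushforward by the $t$-dependent map $S_t=(1-t)T^0+tT^1$, so $\hat u_t(\xi)=\int e^{-i\xi\cdot S_t(x)}\,du_\tau^{k-1}(x)$ is not affine in $t$, and $\FF_s(u_t)$ is not a polynomial in $t$. Worse, the Riesz kernel $K_s(x)=C_{d,s}|x|^{-d+2s}$ is not a convex function of $x$, so the interaction energy $\FF_s$ is \emph{not} convex along generalized geodesics in general (this is exactly the difficulty one encounters with Newtonian interactions in Keller--Segel type models). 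What positivity of $\hat K_s$ actually gives you is strict convexity of $\FF_s$ along \emph{linear} convex combinations $u_t=(1-t)u_0+tu_1$ in $\PP(\R^d)$: there $\hat u_t=(1-t)\hat u_0+t\hat u_1$ is affine, so $\FF_s(u_t)$ is a quadratic polynomial with leading coefficient $\frac12\|u_0-u_1\|^2_{\dot H^{-s}}>0$ whenever $u_0\neq u_1$. Since $u\mapsto W^2(u,u_\tau^{k-1})$ is also convex along linear interpolations (a standard fact about optimal transport cost), the full functional $\Phi_k$ is strictly convex along linear convex combinations, which already gives uniqueness without any geodesic machinery. This is precisely the route the paper takes, noting simply that $\FF_s$ is a square Hilbert norm. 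So the fix is to replace your generalized-geodesic argument with linear interpolation; your ``alternative'' remark about gluing optimal plans is then unnecessary, since linear interpolation requires no regularity of the base point $u_\tau^{k-1}$.

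One minor point: you justify $u_\tau^0\in D(\FF_s)$ by saying the Gaussian multiplier ``only decreases $\dot H^{-s}$ norms.'' That phrasing presumes $u_0\in\dot H^{-s}$, which is not assumed in the statement ($u_0$ is an arbitrary element of $\PP_2(\R^d)$). The correct reasoning is that $|\hat u_0|\le 1$ (probability measure) together with the Gaussian decay $e^{-\omega(\tau)|\xi|^2}$ and the integrability of $|\xi|^{-2s}$ near the origin (since $2s<d$) make $\int|\xi|^{-2s}e^{-2\omega(\tau)|\xi|^2}|\hat u_0(\xi)|^2\,d\xi$ finite regardless of whether $u_0$ itself lies in $\dot H^{-s}$.
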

\begin{proof}
Let $\tau>0$ and $k\in\N$.
By Proposition \ref{prop:FP} and the properties of the Wasserstein distance,
the functional $u\mapsto  \FF_s(u)+\frac1{2\tau}\,W^2(u,u^{k-1}_\tau)$ is nonnegative,
lower semicontinuous with respect to the narrow convergence and with narrowly compact sublevels.
The existence of minimizers  follows by standard direct methods in calculus of variations.
The uniqueness of minimizers follows from the strict convexity of the functional $ u\mapsto \FF_s(u)+\frac1{2\tau}\,W^2(u,u^{k-1}_\tau)$
with respect to linear convex combinations in $\PP_2(\R^d)$, since $\FF_s$ is a square Hilbert norm.
\end{proof}

By Proposition \ref{prop:existenceMM},  the piecewise constant curve
\begin{equation}\label{PC}
u_\tau(t):=u_\tau^{\lceil t/\tau\rceil},
\end{equation}
is uniquely defined, where $\lceil a\rceil:=\min\{m\in \mathbb{N}:m>a\}$ is the upper integer part.

We say that a curve $u:[0,+\infty)\to \PP_2(\R^d)$ is absolutely continuous with finite energy, and we use the notation
$u\in AC^2([0,+\infty);(\PP_2(\R^d),W))$, if there exists
$m\in L^2([0,+\infty))$ such that $W(u(t_1),u(t_2))\leq \int_{t_1}^{t_2} m(r)\,dr$ for every $t_1,t_2\in[0,+\infty)$, $t_1<t_2$.
If $u\in AC^2([0,+\infty);(\PP_2(\R^d),W))$, then 
there exists its metric derivative defined by
\begin{equation*}\label{defmdw}
		|u'|(t) := \lim_{h \to 0} \frac{W(u(t+h),u(t))}{|h|} \qquad \mbox{for a.e. $t \in [0,+\infty)$},
\end{equation*}
and $|u'|(t)\leq m(t)$ for a.e. $t \in [0,+\infty)$.

\begin{theorem}[First convergence result]\label{th:convergence1}
Let $u_0\in \dot H^{-s}(\R^d)\cap\PP_2(\R^d)$ and $u_\tau$ the piecewise constant curve defined in \eqref{PC}.
For every vanishing sequence $\tau_n$ there exists a subsequence (not relabeled) $\tau_{n}$ and a curve
$u\in AC^2([0,+\infty);(\PP_2(\mathbb{R}^d),W))$ such that
\begin{equation}\label{narrowconv}
u_{\tau_{n}}(t) \to u(t)\quad \mbox{ narrowly as $n\to\infty$, for any $t\in[0,+\infty)$}.
\end{equation}
\end{theorem}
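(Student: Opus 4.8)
\textbf{Proof plan for Theorem \ref{th:convergence1}.}
The plan is to apply the general theory of minimizing movements in metric spaces, as developed in \cite{AGS}, since this first convergence result does not exploit any specific feature of $\FF_s$ beyond what Proposition \ref{prop:FP} already provides: nonnegativity, narrow lower semicontinuity, and narrowly compact sublevels. The first step is to establish the classical discrete energy--dissipation estimate. Testing the minimality \eqref{minmov} of $u_\tau^k$ against the competitor $u_\tau^{k-1}$ yields
\[
\FF_s(u_\tau^k)+\frac1{2\tau}W^2(u_\tau^k,u_\tau^{k-1})\le \FF_s(u_\tau^{k-1}),
\]
and summing over $k$ from $1$ to $N$ gives, for every $N$,
\[
\FF_s(u_\tau^N)+\frac1{2\tau}\sum_{k=1}^{N}W^2(u_\tau^k,u_\tau^{k-1})\le \FF_s(u_\tau^0).
\]
Next I would control the right-hand side uniformly in $\tau$: since $u_0\in\dot H^{-s}(\R^d)$ and $u_\tau^0=\Gamma_{\omega(\tau)}\ast u_0$, in Fourier variables $\widehat{u_\tau^0}(\xi)=e^{-\omega(\tau)|\xi|^2}\hat u_0(\xi)$, so $|\xi|^{-s}|\widehat{u_\tau^0}(\xi)|\le |\xi|^{-s}|\hat u_0(\xi)|$ pointwise, whence $\FF_s(u_\tau^0)\le \FF_s(u_0)<+\infty$ for all $\tau>0$. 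Therefore the total squared displacement $\sum_k W^2(u_\tau^k,u_\tau^{k-1})$ is bounded by $2\tau\,\FF_s(u_0)$.

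From here the argument is standard. The displacement bound gives, via Cauchy--Schwarz, a uniform Hölder-type estimate $W(u_\tau(t),u_\tau(r))\le C(|t-r|+\tau)^{1/2}$ on the piecewise constant interpolants, together with a uniform second-moment bound (the second moment of $u_\tau^k$ grows at most linearly in $k\tau$ because each step moves mass by a controlled Wasserstein amount, and $u_\tau^0$ has finite second moment uniformly in $\tau$ since $\Gamma_{\omega(\tau)}$ has bounded variance as $\omega(\tau)\le -1/\ln(1/2)$). Consequently, for each fixed $t$ the family $\{u_{\tau_n}(t)\}_n$ is contained in a bounded subset of $(\PP_2(\R^d),W)$, hence narrowly sequentially relatively compact by the last property recalled in Section \ref{Sub:Wasserstein}. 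A refined version of the Arzelà--Ascoli theorem in metric spaces (see \cite[Proposition 3.3.1]{AGS}) then produces a vanishing subsequence, not relabeled, and a limit curve $u\in AC^2([0,+\infty);(\PP_2(\R^d),W))$ such that $u_{\tau_n}(t)\to u(t)$ narrowly for every $t\ge 0$; the $AC^2$ bound on the limit follows from the uniform $L^2$-in-time bound on the discrete metric speeds coming from the displacement estimate and lower semicontinuity of $W$ under narrow convergence.

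The only mild subtlety — and the point I would treat most carefully — is the interplay between the regularization $u_\tau^0$ of the initial datum and the compactness: one must check that the second moments of $u_\tau^0$ are bounded uniformly in $\tau$ (which holds because $\int|x|^2\,\Gamma_{\omega(\tau)}\ast u_0\,\d x=\int|x|^2\,\d u_0+2d\,\omega(\tau)$ and $\omega(\tau)$ is bounded), and that $\FF_s(u_\tau^0)$ stays bounded (shown above in Fourier variables). Both are genuinely easy here, so there is no real obstacle; essentially everything is a direct application of \cite[Chapter 3]{AGS} once Proposition \ref{prop:FP} and the uniform bounds on $u_\tau^0$ are in place. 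The heavier analysis specific to $\FF_s$ — improved Sobolev regularity of minimizers, the Euler--Lagrange equation, and passage to the limit in the PDE — is deferred to later sections and is not needed for this statement.
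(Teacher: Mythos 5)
Your proposal is correct and follows essentially the same route as the paper: the bound $\FF_s(u_\tau^0)\le\FF_s(u_0)$ via Fourier monotonicity of the Gaussian multiplier, the discrete energy-dissipation estimate from minimality, the uniform second-moment bound exploiting boundedness of $\omega(\tau)$, and the compactness argument of \cite[Proposition 3.3.1]{AGS}. The only cosmetic difference is that you phrase equicontinuity as a $(|t-r|+\tau)^{1/2}$ Hölder estimate, whereas the paper works directly with the piecewise-constant metric speeds $m_\tau$, shows they converge weakly in $L^2(0,+\infty)$ to some $m$, and uses that limit both to obtain equicontinuity and to read off the $AC^2$ bound on the limit curve — a slightly more economical bookkeeping, but the same underlying argument.
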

\begin{proof}
The proof is based on  the compactness argument of minimizing movements, stated in  \cite{AGS}. 

Since $0<\hat\Gamma_\tau(\xi)\leq 1$ we have $|\hat u_\tau^0(\xi)|= |\hat\Gamma_{\omega(\tau)}(\xi)\hat u_0(\xi)|\leq |\hat u_0(\xi)|$ and then
\begin{equation}\label{BFs}
 	\FF_s(u_\tau^0)\leq\FF_s(u_0).
\end{equation}
The first estimate given by the scheme \eqref{minmov}, is the following
\begin{equation}\label{basicestimate}
\FF_s(u_\tau^N) + \frac12\sum_{k=1}^{N} \tau\frac{W^2(u_\tau^k,u_\tau^{k-1})}{\tau^2} \leq \FF_s(u_\tau^0) \leq  \FF_s(u_0), 
\qquad \forall\,N\in\N.
\end{equation}

We show that for any $T>0$ the set $\AA_T:=\{u_\tau^k:\tau>0, N\in\N, N\tau\leq T\}$ is bounded in $(\PP_2(\R^d), W)$ and 
consequently sequentially narrowly compact.\\
Indeed, recalling that $\int_{\R^d}|x|^2\,\d u(x)= W^2(u,\delta_0)$ for any $u\in\PP_2(\R^d)$,
using the triangle inequality and Jensen's discrete inequality we have
\begin{equation}\label{boundW}
\begin{aligned}
   \int_{\R^d}|x|^2u_\tau^N(x)\,\d x= W^2(u_\tau^N,\delta_0) & \leq \Big(\sum_{k=1}^{N} W(u_\tau^k,u_\tau^{k-1}) + W(u_\tau^0,\delta_0)\Big)^2 \\
    &\leq 2\Big(\sum_{k=1}^{N} \tau\frac{W(u_\tau^k,u_\tau^{k-1})}{\tau}\Big)^2 +2 W^2(u_\tau^0,\delta_0) \\
    &\leq 2N\tau \sum_{k=1}^{N} \tau\frac{W^2(u_\tau^k,u_\tau^{k-1})}{\tau^2} +2 W^2(u_\tau^0,\delta_0).
\end{aligned}
\end{equation}
Since for suitable $c>0$ we have
\begin{equation*}
\begin{aligned}
   W^2(u_\tau^0,\delta_0)
    &\leq 2W^2(u_\tau^0,\Gamma_{\omega(\tau)}) +2 W^2(\Gamma_{\omega(\tau)},\delta_0) \\
    &= 2W^2(\Gamma_{\omega(\tau)}\ast u_0,\Gamma_{\omega(\tau)}\ast\delta_0) +2 W^2(\Gamma_{\omega(\tau)},\delta_0) \\
    &\leq  2W^2(u_0,\delta_0) +2 W^2(\Gamma_{\omega(\tau)},\delta_0)
    =  2W^2(u_0,\delta_0) + c\omega(\tau),
\end{aligned}
\end{equation*}
it follows from \eqref{basicestimate} and \eqref{boundW}, since $\FF_s\geq 0$,
that 
\begin{equation}\label{boundA}
	\int_{\R^d}|x|^2u_\tau^N(x)\,\d x \leq 4T\FF_s(u_0) + 4\int_{\R^d}|x|^2u_0(x)\,\d x + 2c,
\end{equation}
and the boundedness of $\AA_T$ follows.

We define the piecewise constant function $m_\tau:[0,+\infty)\to [0,+\infty)$ as
$$ m_\tau(t):=\frac{W(u_\tau(t),u_\tau(t-\tau))}{\tau}$$
with the convention that $u_\tau(t-\tau)=u_\tau(0)$ if $t-\tau<0$.
Since $\FF_s\geq 0$, from \eqref{basicestimate} it follows that
\begin{equation*}\label{basicestimate2}
 \frac12\int_{0}^{+\infty} m_\tau^2(t)\,\d t \leq  \FF_s(u_0).
\end{equation*}
It follows that there exists $m\in L^2(0,+\infty)$ such that $m_\tau$ weakly converges to $m$ in $L^2(0,+\infty)$.
Moreover for any $t_1,t_2 \in [0,+\infty)$,  $t_1<t_2$, setting $k_1(\tau)=[t_1/\tau]$ and $k_2(\tau)=[t_2/\tau]$, 
by triangle inequality it holds
\begin{equation*}\label{equi1}
\begin{aligned}
   W(u_\tau(t_1),u_\tau(t_2)) & \leq \sum_{k=k_1(\tau)}^{k_2(\tau)-1} W(u_\tau^k,u_\tau^{k-1}) \leq \int_{k_1(\tau)\tau}^{k_2(\tau)\tau}m_\tau(t)\,\d t .
   \end{aligned}
\end{equation*}
By the $L^2$ weak convergence of $m_\tau$ the following equicontinuity estimate holds
\begin{equation}\label{equicont}
	\limsup_{\tau\to 0} W(u_\tau(t_1),u_\tau(t_2))\leq \lim_{\tau\to 0} \int_{k_1(\tau)\tau}^{k_2(\tau)\tau}m_\tau(t)\,\d t = \int_{t_1}^{t_2}m(t)\,\d t.
\end{equation}
Applying Proposition 3.3.1 of \cite{AGS} we obtain the convergence  \eqref{narrowconv}.
Passing to the limit in \eqref{equicont} we obtain
\begin{equation*}\label{ac2}
	 W(u(t_1),u(t_2))\leq \int_{t_1}^{t_2}m(t)\,\d t, \qquad \forall\, t_1,t_2 \in [0,+\infty),\quad t_1<t_2,
\end{equation*}
and then $u\in AC^2([0,+\infty);(\PP_2(\mathbb{R}^d),W))$ and
\begin{equation}\label{md}
 	\int_{0}^{+\infty} |u'|^2(t)\,\d t \leq 2 \FF_s(u_0)
\end{equation}
holds.
\end{proof}

\section{Flow interchange and entropy decay estimates}
We briefly review the flow interchange technique introduced by Matthes, McCann  and Savar\'e \cite{MMS}.
Then, with this technique, we  obtain  suitable regularity estimates for solutions to \eqref{minmov}.

\begin{definition}[\textbf{Displacement convex entropy}]\label{entr} 
Let $V:[0,+\infty)\to\R$ be a convex function with super linear growth at infinity, such that $V(0)=0$,  $V\in C^1(0,+\infty)$, 
$V$  is continuous at $0$,
$ \lim_{x\downarrow 0}\frac{V(x)}{x^\alpha}>-\infty$ for some $\alpha>\frac{d}{d+2}$ and
the following McCann displacement convexity assumption (introduced in \cite{Mc}) holds:
\begin{equation*}\label{McCann}
	r \mapsto r^dV(r^{-d}) \qquad \text{is convex and decreasing in }(0,+\infty).
\end{equation*}
If $V$ satisfies the above assumptions, we say that the functional
 $\VV:\PP_2(\R^d)\to (-\infty,+\infty]$, defined by
 $$\VV(u)=\int_{\R^d} V(u(x))\,\d x$$ if $u$
 is absolutely continuous with respect to the Lebesgue measure and $\VV(u)=+\infty$ otherwise,
is a \emph{displacement convex entropy}.
We say that $V$ is the density function of $\VV$.
\end{definition}
As usual we denote by $D(\VV)$ the set of all $u\in\PP_2(\R^d)$ such that $\VV(u)<+\infty$.

\begin{remark}\label{domain}\rm
The condition on the behavior of $V$ at $0$ is needed as usual to have the integrability of the negative part of {\RRR $V\circ u$},
 as soon as $u$ is a probability density with finite second moment. Moreover, if $u_0\in\PP_2(\mathbb{R}^d)$ and $u_\tau^0$ is the regularization defined by \eqref{utau0}, it is clear that $u_\tau^0\in D(\VV)$ for any displacement convex entropy $\VV$, since $u_\tau^0$ is bounded.   
\end{remark}

It is well known that a displacement convex entropy $\VV$ generates a continuous semigroup $S_t:D(\VV)\to D(\VV)$ satisfying
the following family of \emph{Evolution Variational Inequalities} (see \cite[Theorem 11.2.5]{AGS})
\begin{equation}\label{EVI}
       \frac12 W^2(S_t(u),v)- \frac12 W^2(u,v) \le t(\VV(v)-\VV(S_t(u))) \quad \forall u,v \in D(\VV), \quad  \forall t>0,
\end{equation}
and $S_t(\bar u)$ is the unique distributional solution of the Cauchy problem $$\de_t u=\Delta(L_V(u)),\quad u(0)=\bar u,$$ where
$L_V(u):=uV'(u)-V(u)$, such that \eqref{EVI} holds. 
The semigroup is contractive w.r.t. $W$ and extends to $\overline{D(\VV)}=\PP_2(\mathbb{R}^d)$.
Thanks to the regularizing effect $S_t(u)\in D(\VV)$ for any $u\in\PP_2(\mathbb{R}^d)$ and any $t>0$, 
we obtain that \eqref{EVI} holds for every $u,v\in\PP_2(\mathbb{R}^d)$.

If $u\in D(\FF_s)$ we define the dissipation of $\FF_s$ along the flow $S_t$ of $\VV$ by
\begin{equation*}
 \mathfrak{D}_\VV \FF_s(u):= \limsup_{t \downarrow 0} \frac{\FF_s(u)-\FF_s(S_t(u))}{t}.
\end{equation*}

\begin{proposition}[\textbf{Flow interchange}]\label{prop:FI}
Let $\{u_\tau^k:k=0,1,2,\ldots\}$ be the sequence given by {\rm Proposition \ref{prop:existenceMM}} and $\VV$  a displacement convex entropy.
If 
\begin{equation}\label{>-inf}
\mathfrak{D}_\VV\FF_s(u_\tau^k)>-\infty\quad \text{for } k\geq 1,
\end{equation} 
then $u_\tau^k\in D(\VV)$ and
\begin{equation*}\label{mms09}
 \mathfrak{D}_\VV \FF_s(u_{\tau}^k)\le   \frac{\VV(u_{\tau}^{k-1}) -\VV(u_{\tau}^k)}{\tau}, \qquad \, k= 1,2,\ldots.
\end{equation*}
\end{proposition}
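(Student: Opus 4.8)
The plan is to exploit the Evolution Variational Inequality \eqref{EVI} satisfied by the semigroup $S_t$ generated by the displacement convex entropy $\VV$, together with the minimality property of $u_\tau^k$ in problem \eqref{minmov}. The basic idea is to use $S_t(u_\tau^k)$ as a competitor in the minimization problem solved by $u_\tau^k$, and then let $t\downarrow 0$.

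First I would test minimality: since $u_\tau^k$ is a minimizer of $u\mapsto \FF_s(u)+\tfrac1{2\tau}W^2(u,u_\tau^{k-1})$ and $S_t(u_\tau^k)\in\PP_2(\R^d)$ is admissible, we have
\[
\FF_s(u_\tau^k)+\frac1{2\tau}W^2(u_\tau^k,u_\tau^{k-1})\leq \FF_s(S_t(u_\tau^k))+\frac1{2\tau}W^2(S_t(u_\tau^k),u_\tau^{k-1})
\]
for every $t>0$. Rearranging,
\[
\frac{\FF_s(u_\tau^k)-\FF_s(S_t(u_\tau^k))}{t}\leq \frac1{2\tau t}\big(W^2(S_t(u_\tau^k),u_\tau^{k-1})-W^2(u_\tau^k,u_\tau^{k-1})\big).
\]
Now I apply \eqref{EVI} with $u=u_\tau^k$ and $v=u_\tau^{k-1}$, which bounds the right-hand side by $\tfrac1\tau\big(\VV(u_\tau^{k-1})-\VV(S_t(u_\tau^k))\big)$. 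Taking $\limsup_{t\downarrow 0}$ on the left gives $\mathfrak{D}_\VV\FF_s(u_\tau^k)$ by definition, while on the right we need $\liminf_{t\downarrow 0}\VV(S_t(u_\tau^k))\geq \VV(u_\tau^k)$, which follows from the lower semicontinuity of $\VV$ with respect to narrow convergence and the $W$-continuity (hence narrow continuity) of $t\mapsto S_t(u_\tau^k)$ at $t=0$. This yields
\[
\mathfrak{D}_\VV\FF_s(u_\tau^k)\leq \frac{\VV(u_\tau^{k-1})-\VV(u_\tau^k)}{\tau},
\]
which is the claimed inequality.

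It remains to address why $u_\tau^k\in D(\VV)$, i.e. why the right-hand side is not $+\infty$ in a vacuous way. Here the hypothesis \eqref{>-inf} enters: if $\mathfrak{D}_\VV\FF_s(u_\tau^k)>-\infty$, then the chain of inequalities above forces $\VV(u_\tau^k)<+\infty$ — indeed, were $\VV(u_\tau^k)=+\infty$, the lower semicontinuity argument would make the right-hand side $-\infty$ (note $\VV(u_\tau^{k-1})<+\infty$ since $u_\tau^0$ is bounded by Remark \ref{domain} and the argument is inductive, or directly since $S_t$ maps into $D(\VV)$), contradicting $\mathfrak{D}_\VV\FF_s(u_\tau^k)>-\infty$. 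So finiteness of the dissipation propagates to finiteness of $\VV(u_\tau^k)$, and then the displayed inequality is meaningful.

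The main obstacle I anticipate is the careful justification of passing to the limit $t\downarrow 0$ in the term $\VV(S_t(u_\tau^k))$: one needs the regularizing property $S_t(u_\tau^k)\in D(\VV)$ for $t>0$ (so each term is finite), the narrow continuity of the flow at $t=0^+$, and lower semicontinuity of $\VV$, to conclude $\liminf_{t\downarrow 0}\VV(S_t(u_\tau^k))\geq\VV(u_\tau^k)$. All of these are available from the properties of displacement convex entropies recalled before the statement (the EVI \eqref{EVI}, the contraction and extension of $S_t$ to all of $\PP_2(\R^d)$, and Definition \ref{entr}), so the proof is essentially a matter of assembling them in the right order; the genuinely subtle point is simply that the inequality must be read as: \emph{assuming the left side is finite}, everything on the right is finite and the estimate holds.
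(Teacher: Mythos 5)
Your proof is correct and follows exactly the paper's argument: test minimality with the competitor $S_t(u_\tau^k)$, invoke the EVI \eqref{EVI} (which extends to all of $\PP_2(\R^d)$ by the regularizing effect), pass to the limit $t\downarrow 0$ using lower semicontinuity of $\VV$ along the $W$-continuous flow, and propagate $u_\tau^k\in D(\VV)$ inductively from $u_\tau^0\in D(\VV)$ via the hypothesis \eqref{>-inf}. The only tiny blemish is the parenthetical ``or directly since $S_t$ maps into $D(\VV)$'' as a justification for $\VV(u_\tau^{k-1})<+\infty$: this does not apply, since $u_\tau^{k-1}$ is not of the form $S_t(\cdot)$ with $t>0$, so the inductive argument you give first is the one that actually works.
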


\begin{proof}
We have $u_\tau^0\in D(\VV)$, see Remark \ref{domain}. 
For  $t>0$ and $k>0$,
by definition of minimizer there holds
\[
    \FF_s(u_\tau^k) + \frac{1}{2\tau}W^2(u_\tau^k,u_\tau^{k-1})\leq \FF_s(S_t(u_\tau^k)) + \frac{1}{2\tau}W^2(S_t(u_\tau^k),u_\tau^{k-1}),
\]
that is,
\[
   \tau(\FF_s(u_\tau^k) -\FF_s(S_t(u_\tau^k)) ) \leq \frac{1}{2}W^2(S_t(u_\tau^k),u_\tau^{k-1}) - \frac{1}{2}W^2(u_\tau^k,u_\tau^{k-1}).
\]
By using \eqref{EVI} we obtain
\[
   \tau\frac{\FF_s(u_\tau^k) -\FF_s(S_t(u_\tau^k))}{t} \leq \VV(u_\tau^{k-1}) - \VV(S_t(u_\tau^k)).
\]
As $u_\tau^0\in D(\VV)$, we may now recursively apply the above inequality: thanks to \eqref{>-inf}, by passing to the limit as $t\downarrow 0$ and using the lower semicontinuity of $\VV$ with respect to the narrow convergence we conclude.
\end{proof}

\begin{remark}\rm With the next lemmas we will characterize the dissipation and show that \eqref{>-inf} holds true for any displacement convex entropy $\VV$. 
\end{remark}

\subsection{Improved regularity}
The following result makes use of flow interchange
with the choice $\VV=\HH$, the entropy functional.

%
%

\begin{lemma} \label{lemma:ed}
Let $u_0\in D(\FF_s)$ and $\{u_\tau^k:k=0,1,2,\ldots\}$ the sequence given by  {\rm Proposition \ref{prop:existenceMM}}.
Then
$u_\tau^k\in \dot H^{1-s}(\R^d)\cap D(\mathcal{H})$ for any $k\ge 0$ and
\begin{equation}\label{regestimate}
   \|u_\tau^k\|^2_{\dot H^{1-s}(\R^d)} \leq  \frac{\HH(u_\tau^{k-1})-\HH(u_\tau^k)}{\tau},  \qquad k=1,2,\ldots.
   \end{equation}
In particular,
\[
\mathcal{H}(u_\tau^k)\le \mathcal H(u_\tau^{k-1}), \qquad k=1,2,\ldots.
\]
\end{lemma}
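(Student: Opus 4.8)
The strategy is to apply the flow interchange Proposition \ref{prop:FI} with the displacement convex entropy $\VV = \HH$, whose density is $V(x) = x\log x$. First I would check that $\HH$ satisfies the hypotheses of Definition \ref{entr}: $V(x)=x\log x$ is convex with superlinear growth, $V(0)=0$, $V$ is continuous at $0$ and $C^1$ on $(0,+\infty)$, the limit condition at $0$ holds for any $\alpha>d/(d+2)$, and McCann's condition $r\mapsto r^d V(r^{-d}) = -d\, r^d \log r$ is convex and decreasing on $(0,+\infty)$. The associated semigroup $S_t$ is the heat semigroup, since $L_V(u) = uV'(u)-V(u) = u$, so that $\partial_t u = \Delta(L_V(u)) = \Delta u$; hence $S_t(\bar u) = \Gamma_t \ast \bar u$ with $\Gamma_t$ as in \eqref{gaussian}.

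The core computation is to identify the dissipation $\mathfrak{D}_\HH \FF_s(u)$ and show it equals (or bounds below) $\|u\|^2_{\dot H^{1-s}(\R^d)}$. Working in Fourier variables is natural here: if $w(t) := S_t(u) = \Gamma_t \ast u$, then $\widehat{w(t)}(\xi) = e^{-t|\xi|^2}\hat u(\xi)$, so
\[
\FF_s(S_t(u)) = \frac{1}{2(2\pi)^d}\int_{\R^d} |\xi|^{-2s} e^{-2t|\xi|^2}|\hat u(\xi)|^2\,\d\xi.
\]
Differentiating in $t$ and evaluating at $t=0$ gives, at least formally,
\[
-\frac{\d}{\d t}\Big|_{t=0^+}\FF_s(S_t(u)) = \frac{1}{(2\pi)^d}\int_{\R^d}|\xi|^{2-2s}|\hat u(\xi)|^2\,\d\xi = 2\|u\|^2_{\dot H^{1-s}(\R^d)},
\]
so that $\mathfrak{D}_\HH\FF_s(u) = \|u\|^2_{\dot H^{1-s}(\R^d)}$, interpreting both sides as $+\infty$ when $u\notin\dot H^{1-s}$. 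To make this rigorous with the $\limsup$ in the definition of $\mathfrak{D}_\HH$, I would use that $t\mapsto\FF_s(S_t(u))$ is nonincreasing (the integrand decreases pointwise in $t$) and apply the monotone convergence theorem to the difference quotient $\frac{1-e^{-2t|\xi|^2}}{t}\uparrow 2|\xi|^2$ as $t\downarrow 0$; this yields $\mathfrak{D}_\HH\FF_s(u)\geq \|u\|^2_{\dot H^{1-s}(\R^d)}$, which is all that is needed (equality is not required). In particular $\mathfrak{D}_\HH\FF_s(u_\tau^k) \geq 0 > -\infty$, so hypothesis \eqref{>-inf} of Proposition \ref{prop:FI} is satisfied.

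With these ingredients, Proposition \ref{prop:FI} immediately gives $u_\tau^k \in D(\HH)$ and
\[
\|u_\tau^k\|^2_{\dot H^{1-s}(\R^d)} \leq \mathfrak{D}_\HH\FF_s(u_\tau^k) \leq \frac{\HH(u_\tau^{k-1})-\HH(u_\tau^k)}{\tau},
\]
which is \eqref{regestimate}; since the right-hand side is finite, $\|u_\tau^k\|_{\dot H^{1-s}(\R^d)}<+\infty$, i.e. $u_\tau^k\in\dot H^{1-s}(\R^d)$. The monotonicity $\HH(u_\tau^k)\leq\HH(u_\tau^{k-1})$ follows because the left-hand side of \eqref{regestimate} is nonnegative. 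I expect the main obstacle to be the rigorous justification of the dissipation identity: one must be careful that the difference quotients are handled by a genuine monotone/dominated convergence argument in the $\xi$-integral rather than an interchange of limit and derivative, and one must track that the inequality goes the right way (lower bound on the dissipation) so that no regularity of $u_\tau^k$ beyond $\dot H^{-s}$ needs to be assumed a priori — the improved regularity is precisely the output.
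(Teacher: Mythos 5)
Your proposal is correct and follows the same overall strategy as the paper: apply flow interchange with $\VV=\HH$, identify $S_t$ as the heat semigroup, and compute the dissipation in Fourier variables. Where you genuinely diverge is in the rigorous justification of the lower bound $\mathfrak{D}_\HH\FF_s(u)\geq\|u\|^2_{\dot H^{1-s}(\R^d)}$. The paper first establishes that $t\mapsto\FF_s(S_t(u))$ is $C^1$ on $(0,+\infty)$ and continuous at $0$, applies Lagrange's mean value theorem to write the difference quotient as $\|S_{\theta(t)}(u_\tau^k)\|^2_{\dot H^{1-s}}$, and then invokes lower semicontinuity of the $\dot H^{1-s}$ norm under narrow convergence. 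You instead observe that
\[
\frac{\FF_s(u)-\FF_s(S_t(u))}{t}=\frac{1}{2(2\pi)^d}\int_{\R^d}|\xi|^{-2s}\,\frac{1-e^{-2t|\xi|^2}}{t}\,|\hat u(\xi)|^2\,\d\xi,
\]
with $t\mapsto\frac{1-e^{-2t|\xi|^2}}{t}$ decreasing, so that the integrand increases pointwise to $2|\xi|^{2-2s}|\hat u(\xi)|^2$ as $t\downarrow0$; monotone convergence then gives the dissipation directly, with no need for differentiability on $(0,+\infty)$, continuity at $0$, or a separate lower-semicontinuity step. This is a cleaner route to the same bound, and it correctly interprets both sides in $[0,+\infty]$ so that no a priori regularity of $u_\tau^k$ is required. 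Two minor points: (i) in your displayed chain there is an arithmetic slip, since $\frac{1}{(2\pi)^d}\int|\xi|^{2-2s}|\hat u|^2\,\d\xi$ equals $\|u\|^2_{\dot H^{1-s}(\R^d)}$ and not $2\|u\|^2_{\dot H^{1-s}(\R^d)}$ (your final conclusion $\mathfrak{D}_\HH\FF_s(u)=\|u\|^2_{\dot H^{1-s}}$ is the correct one, so this is just a typo in the middle term); (ii) the statement also asserts $u_\tau^0\in\dot H^{1-s}(\R^d)\cap D(\HH)$, which you should note follows immediately from $u_\tau^0=\Gamma_{\omega(\tau)}\ast u_0$ with $u_0\in\dot H^{-s}$ — on the Fourier side $|\xi|^{2(1-s)}e^{-2\omega(\tau)|\xi|^2}\lesssim|\xi|^{-2s}$ — and from the boundedness of $u_\tau^0$ (Remark \ref{domain}).
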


\begin{proof}
By its definition in \eqref{utau0}, it is clear that $u_\tau^0\in D(\mathcal{H})$.

We denote by $S_t$ the heat semigroup on $\R^d$, namely the flow generated by the entropy $\mathcal H$.
For $k\geq 0$ we
have $S_t(u_\tau^k)\in \dot H^{1-s}(\mathbb{R}^d)$ for any $t>0$. Indeed, by uniqueness of the solution of the heat equation the representation
$S_t(u_\tau^k)=\Gamma_t\ast u_\tau^k$ holds, where $\Gamma_t$ denotes the family of gaussian kernels \eqref{gaussian}. 
Then, using the notation $w_t:=S_t(u_\tau^k)$, since $\hat \Gamma_t$ is a Gaussian, by \eqref{basicestimate} we have
\[\begin{aligned}
\int_{\mathbb{R}^d}|\xi|^{2(1-s)}|\hat w_t(\xi)|^2\,\d\xi&=\int_{\mathbb{R}^d}|\xi|^{2(1-s)}|\hat\Gamma_t(\xi)|^2|
\hat {\RRR u}_\tau^k(\xi)|^2\,\d\xi\\&\le C_t\|u_\tau^k\|^2_{\dot H^{-s}(\mathbb{R}^d)}\le 2C_t\FF_s(u_\tau^k)\le 2C_t \FF_s(u_0)<+\infty,
\end{aligned}\]
where $C_t:=\max_{\xi\in\mathbb{R}^d}|\xi|^2|\hat\Gamma_t(\xi)|^2$. 
Since $u_{\tau}^0 := \Gamma_{\omega(\tau)}\ast u_0$ (see \eqref{utau0}), 
a similar argument shows that $u_\tau^0\in \dot H^{1-s}(\mathbb{R}^d)$.

Next we let $k>0$ and  we consider the real function 
$t \mapsto \FF_s(w_t)$ for $t\in [0,+\infty)$. We claim that this function is differentiable in $(0,+\infty)$ and
continuous at $t=0$, and that 
\begin{equation}\label{gprime}
\frac{d}{dt} \FF_s(w_t) = -\|S_{t}(u_\tau^k)\|^2_{\dot H^{1-s}(\R^d)} = -\| w_t\|^2_{\dot H^{1-s}(\R^d)}
 \qquad \forall\,t\in(0,+\infty).
\end{equation}
To show this we recall that in Fourier variables
the heat equation reads
$\partial_t \hat w_t(\xi) + |\xi|^2\hat w_t(\xi)=0$ in $\R^d\times(0,+\infty)$.
Taking into account the smoothness of $w_t$ we obtain 
\begin{equation*}
\begin{aligned}
    \frac{\d}{\d t}\FF_s(w_t) &=  \frac{1}{2(2\pi)^d} \frac{\d}{\d t}  \int_{\R^d}|\xi|^{-2s}\overline{\hat w_t(\xi)}{{\hat w_t(\xi)}} \,\d\xi=\frac{1}{(2\pi)^d} \int_{\R^d}|\xi|^{-2s}\overline{\hat w_t(\xi)}\de_t{{\hat w_t(\xi)}} \,\d\xi \\ &=
    -\frac{1}{(2\pi)^d} \int_{\R^d}|\xi|^{-2s}\overline{\hat w_t(\xi)}|\xi|^2{\hat w}_t(\xi) \,\d\xi=-\|w_t\|^2_{\dot H^{1-s}(\mathbb{R}^d)}
    \end{aligned}
\end{equation*}
and thus the desired differentiability and \eqref{gprime} follow. Now, we prove that 
the map $t\mapsto \FF_s(w_t)$ is continuous at $t = 0$.
Indeed, since $0< \hat\Gamma_t(\xi)\leq 1$ we have $|\hat w_t(\xi)|^2= |\hat\Gamma_t(\xi)\hat {\RRR u}_\tau^k(\xi)|^2\leq |\hat {\RRR u}_\tau^k(\xi)|^2$ and
it follows that $\FF_s(w_t)\leq \FF_s({\RRR u}_\tau^k)$.
Since $\FF_s$ is lower semi continuous with respect to the narrow convergence, the continuity at $0$ follows.

By Lagrange's mean value Theorem, for every $t>0$ there exists $\theta(t)\in (0,t)$ such that
\[
\frac{\FF_s(u_\tau^k) -\FF_s(S_t(u_\tau^k))}{t} =  \|S_{\theta(t)}(u_\tau^k)\|^2_{\dot H^{1-s}(\R^d)}.
\]
By the lower semicontinuity of the $\dot H^{1-s}$ norm with respect to the narrow convergence it follows that
\[
 \|u_\tau^k\|^2_{\dot H^{1-s}(\R^d)} \leq   \mathfrak{D}_\HH \FF_s(u_\tau^k).
\]
 Then, by Proposition \ref{prop:FI},   we obtain that $u_\tau^k\in D(\mathcal{H})\cap \dot H^{1-s}(\mathbb{R}^d)$ and \eqref{regestimate} holds.
\end{proof}


Integrating the estimate \eqref{regestimate}  with respect to time, we obtain the following space-time bound on the discrete solution $u_\tau$.
For the integer part of the real number $a$ we use the notation $[a]:=\max\{m\in\mathbb{Z}:m\le a\}$. 

\begin{corollary}\label{furthercorollary}
Let $u_0\in  D(\FF_s)$, $\{u_\tau^k:k=0,1,2,\ldots\}$ the sequence given by {\rm Proposition \ref{prop:existenceMM}} and
$u_{\tau}$ the corresponding discrete piecewise constant approximate solution defined in \eqref{PC}.
Then
$u_{\tau}(t)\in \dot H^{1-s}(\mathbb{R}^d)$ for every $t>0$ and
\begin{equation}\label{RI1}
\int_{T_0}^T\|u_{\tau}(t)\|_{ \dot H^{1-s}(\mathbb{R}^d)}^2\,\d t \le \HH(u_\tau^{N_0(\tau)}) + c\Big(1 +T\FF_s(u_0)+\int_{\R^d}|x|^2\,\d u_0(x) \Big)
\end{equation} 
holds for any $T_0\geq0$ and $T>T_0$,
where $N_0(\tau):=[ T_0/\tau]$ and $c$ is a constant depending only on the dimension $d$.
\end{corollary}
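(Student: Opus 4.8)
The plan is to integrate the discrete estimate \eqref{regestimate} from $k=N_0(\tau)+1$ up to $k=\lceil T/\tau\rceil$, exploiting the telescoping structure of the right-hand side. First I would observe that, by Lemma \ref{lemma:ed}, for each $k\geq 1$ we have $\tau\|u_\tau^k\|^2_{\dot H^{1-s}(\mathbb{R}^d)}\leq \HH(u_\tau^{k-1})-\HH(u_\tau^k)$, so summing over a range of consecutive indices leaves only the first entropy term minus the last one. Since the piecewise constant interpolation satisfies $u_\tau(t)=u_\tau^{\lceil t/\tau\rceil}$, the time integral $\int_{T_0}^T\|u_\tau(t)\|^2_{\dot H^{1-s}}\,\d t$ is (up to the treatment of the endpoints, where one uses that $\lceil T_0/\tau\rceil$ differs from $N_0(\tau)=[T_0/\tau]$ by at most one) bounded by $\sum_{k} \tau\|u_\tau^k\|^2_{\dot H^{1-s}}$ over the relevant index range, hence by $\HH(u_\tau^{N_0(\tau)})-\HH(u_\tau^{\lceil T/\tau\rceil})$.

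The next step is to bound $-\HH(u_\tau^{\lceil T/\tau\rceil})$ from above, i.e. to control the negative part of the entropy of a measure in $\AA_T$. This is the standard argument that for a probability density $u$ with finite second moment, $\HH(u)\geq -C(1+\int_{\mathbb{R}^d}|x|^2\,\d u(x))$ for a dimensional constant $C$: one compares $u$ with a Gaussian of the same (or comparable) variance via the nonnegativity of the relative entropy $\int u\log(u/\Gamma)\geq 0$, which gives $\HH(u)\geq -\int u\log\Gamma\,\d x = c_1 + c_2\int|x|^2\,\d u$. Then I invoke the moment bound \eqref{boundA} established in the proof of Theorem \ref{th:convergence1}, namely $\int_{\mathbb{R}^d}|x|^2 u_\tau^N(x)\,\d x\leq 4T\FF_s(u_0)+4\int_{\mathbb{R}^d}|x|^2\,\d u_0(x)+2c$ for $N\tau\leq T$, which applies to $N=\lceil T/\tau\rceil$ (possibly after enlarging $T$ to $T+1$ to absorb the ceiling, changing only the constant). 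Combining, $-\HH(u_\tau^{\lceil T/\tau\rceil})\leq c(1+T\FF_s(u_0)+\int_{\mathbb{R}^d}|x|^2\,\d u_0(x))$, and adding the surviving term $\HH(u_\tau^{N_0(\tau)})$ yields \eqref{RI1}. The fact that $u_\tau(t)\in\dot H^{1-s}(\mathbb{R}^d)$ for every $t>0$ is immediate from Lemma \ref{lemma:ed}, since $u_\tau(t)=u_\tau^{\lceil t/\tau\rceil}$ with $\lceil t/\tau\rceil\geq 1$.

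The only mildly delicate point — and the one I would treat most carefully — is the bookkeeping at the endpoints $T_0$ and $T$: one must check that restricting the telescoping sum to the indices $k$ with $k\tau\in(T_0,T]$ (equivalently $\lceil t/\tau\rceil$ for $t\in(T_0,T]$) produces exactly $\HH(u_\tau^{N_0(\tau)})-\HH(u_\tau^{\lceil T/\tau\rceil})$ up to replacing $N_0(\tau)$ by $[T_0/\tau]$ rather than $\lceil T_0/\tau\rceil$, and that the contribution of any partial interval near $T_0$ or $T$ is harmlessly absorbed (monotonicity of the entropy along the scheme, also from Lemma \ref{lemma:ed}, makes dropping $-\HH$ of an intermediate step legitimate since we only need an upper bound). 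Everything else is the routine entropy--moment comparison and the already-available a priori bounds, so no genuine obstacle remains beyond this indexing care.
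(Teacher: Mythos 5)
Your argument is correct and is essentially the paper's own proof: telescope \eqref{regestimate} over $k=N_0(\tau)+1,\dots,\lceil T/\tau\rceil$, bound $-\HH$ of the terminal iterate by the Carleman-type entropy--moment comparison, and invoke the moment bound \eqref{boundA}. One small sign slip: nonnegativity of relative entropy gives $\HH(u)\ge \int u\log\Gamma\,\d x$, not $\HH(u)\ge -\int u\log\Gamma\,\d x$, which then yields the intended bound $-\HH(u)\le c_1+c_2\int|x|^2\,\d u$; otherwise your endpoint bookkeeping is exactly what the paper handles (tersely) with $N=\lceil T/\tau\rceil$, $N_0=[T_0/\tau]$.
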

\begin{proof}
Let $T>0$, $N= \lceil T/\tau\rceil$ and $N_0=N_0(\tau)$. By \eqref{regestimate} we obtain
$$\int_{T_0}^T\|u_{\tau}(t)\|_{\dot H^{1-s}(\mathbb{R}^d)}^2\,\d t \leq \sum_{k={N_0+1}}^{N}\tau\|u^k_{\tau}\|_{\dot H^{1-s}(\mathbb{R}^d)}^2
\leq  \HH(u_\tau^{N_0}) -\HH(u_\tau^N).$$
By a Carleman type inequality there holds
$$ -\HH(u_\tau^N)\leq \tilde c\Big(1+\int_{\R^d}|x|^2u_\tau^N(x)\,\d x\Big)$$
for a suitable constant depending only on $d$.
From \eqref{boundA}  we obtain
$$ -\HH(u_\tau^N)\leq c\Big(1+ T\FF_s(u_0)+\int_{\R^d}|x|^2 \,\d u_0(x) \Big)$$
for $c$ depending only on the dimension $d$ and we conclude.
\end{proof}


\subsection{Decay of the entropies}

In the next Lemma we apply the flow interchange to a general displacement convex entropy $\GG$ and we 
compute a lower bound for the dissipation of the functional $\FF_s$ along the flow of $\GG$.
This result is useful for the regularizing effect and the $L^p$ estimates.
\begin{lemma}\label{lemma:decay1}
Let $u_0\in D(\FF_s)$ and $\{u_\tau^k:k=0,1,2,\ldots\}$ the sequence given by  {\rm Proposition \ref{prop:existenceMM}}.
Let $\GG$ be  a displacement convex entropy with density function $G$, according to  {\rm Definition \ref{entr}}. Then $u_\tau^k\in D(\GG)$ for any $k\ge 0$ and  there holds
\begin{equation}\label{decayLp}
0\le \langle u_\tau^k,L_G(u_\tau^k)\rangle_{1-s}\le\frac{\GG(u_\tau^{k-1})-\GG(u_\tau^k)}{\tau},\qquad k=1,2,\ldots.
\end{equation}
In particular,
\begin{equation*}
    \GG(u_\tau^k) \leq \GG(u_\tau^{k-1}),\qquad k=1,2,\ldots.
\end{equation*}
\end{lemma}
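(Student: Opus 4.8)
The plan is to run the flow interchange (Proposition \ref{prop:FI}) with the choice $\VV=\GG$, following exactly the same template already used for $\VV=\HH$ in Lemma \ref{lemma:ed}. The two things to supply are: (i) a formula identifying the dissipation $\mathfrak{D}_\GG\FF_s(u_\tau^k)$ (or at least a lower bound for it) in terms of the bilinear form $\langle\cdot,\cdot\rangle_{1-s}$, and (ii) the verification of the finiteness condition \eqref{>-inf}, so that Proposition \ref{prop:FI} applies and yields $u_\tau^k\in D(\GG)$ together with the inequality chain \eqref{decayLp}. The positivity $\langle u_\tau^k,L_G(u_\tau^k)\rangle_{1-s}\ge 0$ will come from Proposition \ref{prop:BB}, since $L_G(r)=rG'(r)-G(r)$ is nondecreasing on $(0,+\infty)$ (its derivative is $rG''(r)\ge 0$ by convexity of $G$) and vanishes at $0$.

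First I would recall that the semigroup $S_t$ generated by $\GG$ solves $\partial_t u=\Delta(L_G(u))$ in the sense of \eqref{EVI}, with $S_t(u_\tau^k)\in D(\GG)$ for every $t>0$ by the regularizing effect. The key computation is the derivative of $t\mapsto\FF_s(S_t(u_\tau^k))$: formally, writing $w_t:=S_t(u_\tau^k)$ and using that $\FF_s(w)=\tfrac12\langle\langle w,w\rangle\rangle$ where $\langle\langle\cdot,\cdot\rangle\rangle$ denotes the $\dot H^{-s}$ scalar product, one gets
\[
\frac{\d}{\d t}\FF_s(w_t)=\langle\langle w_t,\partial_t w_t\rangle\rangle=\langle\langle w_t,\Delta L_G(w_t)\rangle\rangle=-\langle w_t,L_G(w_t)\rangle_{1-s},
\]
the last equality because the operator $(-\Delta)$ intertwines $\dot H^{-s}$ with $\dot H^{1-s}$ in the pairing (this is the same identity, in Fourier variables $|\xi|^{-2s}\cdot|\xi|^2=|\xi|^{2(1-s)}$, that was used in \eqref{gprime}). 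Hence $\mathfrak{D}_\GG\FF_s(u_\tau^k)=\langle u_\tau^k,L_G(u_\tau^k)\rangle_{1-s}$ whenever the map is absolutely continuous and continuous at $0$; by a Lagrange mean value argument together with lower semicontinuity of $v\mapsto\langle v,L_G(v)\rangle_{1-s}$ along the flow (exactly as for the $\dot H^{1-s}$ norm in Lemma \ref{lemma:ed}) one obtains at least $\langle u_\tau^k,L_G(u_\tau^k)\rangle_{1-s}\le\mathfrak{D}_\GG\FF_s(u_\tau^k)$, which is what we need.

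The main obstacle, and the point requiring care, is rigor in the above differentiation when $G$ is a general displacement convex entropy rather than the explicit $G(r)=r\log r$: the density $w_t=S_t(u_\tau^k)$ is no longer an explicit Gaussian convolution, so smoothness and decay in Fourier space must be argued from parabolic regularity for $\partial_t u=\Delta L_G(u)$, and one must justify that $\langle w_t,L_G(w_t)\rangle_{1-s}$ is finite for $t>0$ and controls the incremental ratio. A clean way around this is to argue the continuity at $t=0$ of $t\mapsto\FF_s(w_t)$ from narrow continuity of the semigroup plus lower semicontinuity of $\FF_s$ (Proposition \ref{prop:FP}), to obtain $\FF_s(w_t)\le\FF_s(u_\tau^k)$ directly from contractivity/EVI rather than from a Fourier bound, and then to use the EVI-based inequality in Proposition \ref{prop:FI} in integrated form. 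Once $\mathfrak{D}_\GG\FF_s(u_\tau^k)\ge\langle u_\tau^k,L_G(u_\tau^k)\rangle_{1-s}\ge 0>-\infty$ is established, condition \eqref{>-inf} holds, Proposition \ref{prop:FI} gives $u_\tau^k\in D(\GG)$ and $\mathfrak{D}_\GG\FF_s(u_\tau^k)\le(\GG(u_\tau^{k-1})-\GG(u_\tau^k))/\tau$, and combining with the positivity yields \eqref{decayLp}; the monotonicity $\GG(u_\tau^k)\le\GG(u_\tau^{k-1})$ is then immediate since the left side of \eqref{decayLp} is nonnegative. Note also $u_\tau^0\in D(\GG)$ by Remark \ref{domain}, so the recursion starts.
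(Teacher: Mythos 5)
You correctly identify the architecture (flow interchange with $\VV=\GG$, Lagrange mean value theorem, lower semicontinuity) and you correctly flag the main difficulty: for a general displacement convex entropy $\GG$, the semigroup $S_t$ solves $\partial_t w=\Delta L_G(w)$, which is not a Gaussian convolution, so the smoothness needed to differentiate $t\mapsto\FF_s(w_t)$ in Fourier variables is not for free. But your proposed workaround does not close that gap, and the paper's mechanism for closing it is missing from your sketch.

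The equation $\partial_t w=\Delta L_G(w)$ is in general \emph{degenerate} parabolic: for $G_p(u)=u^p/(p-1)$ one has $L_G'(u)=pu^{p-1}\to 0$ as $u\to 0$, so this is a porous medium equation and $w_t$ need not be smooth or positive. Consequently the formal Fourier computation of $\frac{\d}{\d t}\FF_s(w_t)$, and the differentiability needed to run Lagrange's mean value theorem, cannot be justified by "parabolic regularity" as you suggest. Your fallback --- deducing $\FF_s(w_t)\le\FF_s(u_\tau^k)$ "directly from contractivity/EVI" --- is also not available: Wasserstein contractivity of $S_t$ says nothing about monotonicity of $\FF_s$ along the flow (indeed that monotonicity is essentially what the dissipation computation is supposed to produce). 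The paper's device is to replace $\GG$ by the $\eps$-regularized entropy $\VV=\GG+\eps\HH$, whose flow solves $\partial_t w=\Delta\Psi(w)$ with $\Psi(v)=L_G(v)+\eps v$ and $\Psi'\ge\eps>0$, hence a \emph{uniformly} parabolic quasilinear equation: $w_t$ is then bounded, smooth and strictly positive for $t>0$, the Fourier differentiation is legitimate, $\|w_t\|_{L^2}\le\|u_\tau^k\|_{L^2}$ gives strong $L^2$ continuity of the semigroup (which, together with a uniform $\dot H^{-s-\delta}$ bound from $|\hat w_t|\le 1$, yields continuity of $\FF_s(w_t)$ at $t=0$ by interpolation, not by contractivity), and Proposition \ref{prop:FI} is applied to $\VV=\GG+\eps\HH$. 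Only at the very end does one let $\eps\to 0$ to recover the statement for $\GG$ alone. Without this regularization step your proof is incomplete.
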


\begin{proof}
The proof is based on the same argument of Lemma \ref{lemma:ed}. First of all, we have $u_\tau^0\in D(\GG)$ by Remark \ref{domain}.

For $\eps>0$ we consider the displacement convex entropy
$$\VV(u) := \GG(u) + \eps\HH(u).$$
We denote by $S_t$ the flow associated to $\VV$ with respect to the Wasserstein distance.
Let us fix $k>0$ and define $w_t:=S_t(u_\tau^k)$, thus 
$w_t$ satisfies the equation
\begin{equation}\label{nonlinear}
 \de_t w_t=\Delta L_G(w_t)+\eps \Delta w_t  = \Delta \Psi(w_t), 
 \end{equation}
with initial datum $u_\tau^k$, where $L_G(v)=vG'(v)-G(v)$ and $\Psi(v) = L_G (v) + \eps v$.
Equation \eqref{nonlinear} is a quasilinear non degenerate parabolic equation since
$\Psi$ satisfies $\Psi'>0$. As a result, the solution $w_t$ is bounded,  smooth  and strictly positive for $t>0$
(see for example \cite[Chapter 3]{V}). 
Moreover since Lemma \ref{lemma:ed} gives $u_\tau^k\in \dot H^{1-s}(\mathbb{R}^d)$ for any $k> 0$ 
and $u_\tau^k\in L^1(\R^d)$ by construction, we have that $u_\tau^k\in L^2(\R^d)$ thanks to 
the Sobolev embedding \eqref{fractionalembedding}. 
Now, if we test equation \eqref{nonlinear} with $w_t$, we immediately get (recall that $L_G$ is monotone increasing) 
\begin{equation}\label{L2af}
\|w_t \|_{L^2(\mathbb{R}^d)}\le \| u_\tau^k\|_{L^2(\mathbb{R}^d)}, \,\,\,\,\forall t>0.
\end{equation}
Thus, the estimate above combined with the lower semi continuity of the norm, gives the
strong continuity in $L^2(\R^d)$ of the semigroup. 

By making use of the transformed version of \eqref{nonlinear}, there holds, for any $t>0$,
$$
\begin{aligned}
\frac{d}{dt}\FF_s (w_t) &=\frac{1}{(2\pi)^d} \int_{\R^d}|\xi|^{-2s}\overline{\hat w_t(\xi)}\de_t{{\hat w_t(\xi)}} \,\d\xi\\&=- \frac{1}{(2\pi)^d}\int_{\mathbb{R}^d}|\xi|^{-2s}\overline{\hat w_t(\xi)}|\xi|^2\left(\widehat{L_G(w_t)}(\xi)+\eps\hat w_t(\xi)\right)\\
	&= -  \langle w_t,L_G(w_t)\rangle_{1-s} -\eps  \langle w_t,w_t\rangle_{1-s}.
\end{aligned}	
	$$
Notice that $L_G$ is non decreasing and locally Lipschitz, and since $w_t$ is bounded and  $w_t \in H^{1-s}(\R^d)$ for
 $t\in (0,+\infty)$,
from Proposition \ref{prop:BB} we obtain $L_G\circ w_t\in \dot H^{1-s}(\mathbb{R}^d)$ and 
$\langle w_t,L_G(w_t)\rangle_{1-s}\ge 0 $ for $t$ in $(0,+\infty)$. 
In particular, $t\mapsto \FF_s(w_t)$ is differentiable in $(0,+\infty)$.

Next we shall prove that $t\mapsto \FF_s(w_t)$ is continuous at $t = 0$.
Since $w_t$ is a probability density, we have that $\vert \hat{w}_t(\xi)\vert\le 1$ for any $\xi\in \R^d$.
Thus,  
for every $t\in[0,+\infty)$ and for some $\delta>0$ we have
\begin{equation*}\begin{aligned} 
\|S_t(u_\tau^k)\|^2_{\dot H^{-s-\delta}(\R^d)} &= \int_{\R^d}|\xi|^{-2s-2\delta}|\hat {\RRR w}_t(\xi)|^2\,\d\xi \\
&\leq \int_{\{|\xi|\geq1\}}|\hat {\RRR w}_t(\xi)|^2\,\d\xi + \int_{\{|\xi|<1\}}|\xi|^{-2s-2\delta}\,\d\xi.
\end{aligned}\end{equation*}
By \eqref{L2af} and Plancherel's Theorem,
for $0<\delta<d/2-s$ the previous estimate shows that 
$\|S_t(u_\tau^k)\|_{\dot H^{-s-\delta}(\R^d)}\leq c$ for every $t\in[0,1]$, where $c$ is a constant not depending on $t$.
Then, for a suitable $\theta\in(0,1)$ and $0<\delta<d/2-s$, by interpolation we have
$$
\begin{aligned}
 \|S_t(u_\tau^k) -u_\tau^k\|_{\dot H^{-s}(\R^d)} &\leq  
 \|S_t(u_\tau^k) -u_\tau^k\|_{L^{2}(\R^d)}^{1-\theta}  \|S_t(u_\tau^k) -u_\tau^k\|_{\dot H^{-s-\delta}(\R^d)}^\theta \\&\leq (2c)^\theta  \|S_t(u_\tau^k) -u_\tau^k\|_{L^{2}(\R^d)}^{1-\theta}, 
\end{aligned}
 $$
and the obtained $L^2(\R^d)$ strong continuity of $S_t$ implies that $t\mapsto \FF_s(w_t)$ is continuous at $t= 0$.

By the same argument of Lemma \ref{lemma:ed}, based on Lagrange mean value theorem, we  obtain for suitable $\theta(t)\in (0,t)$
\[
\frac{\FF_s(u_\tau^k) -\FF_s(S_t(u_\tau^k))}{t} =  	\eps\|S_{\theta(t)}(u_\tau^k)\|^2_{\dot H^{1-s}(\R^d)}+\langle S_{\theta(t)}(u_\tau^k),L_G(S_{\theta(t)}(u_\tau^k))\rangle_{1-s}.
\]
Notice that the map $u\mapsto \langle u,L_G(u)\rangle_{1-s}$ is lower semicontinuous with respect to the strong $L^2(\mathbb{R}^d)$ convergence. This follows by applying Fatou's lemma to the expression \eqref{bilinearform}, where the integrand is nonnegative in this case, since $L_G$ is nondecreasing (see Proposition \ref{prop:BB}). Therefore, by passing to the limit as $t\downarrow 0$ we obtain
\[
0\le  	
	\langle u_\tau^k,L_G(u_\tau^k)\rangle_{1-s}\le \liminf_{t\downarrow 0} \frac{\FF_s(u_\tau^k) -\FF_s(S_t(u_\tau^k))}{t} \le\mathfrak{D}_{\mathcal{V}}\FF_s(u_\tau^k).
\]
The latter estimate, together with Proposition \ref{prop:FI}, entails $u_\tau^k\in D(\VV)$ and
$$ 
 \tau\langle u_\tau^k,L_G(u_\tau^k)\rangle_{1-s}+\GG(u_\tau^k) +\eps\HH(u_\tau^k) \leq \GG(u_\tau^{k-1}) +\eps\HH(u_\tau^{k-1}), \qquad k=1,2,\ldots. $$
 In particular,  for $k=1,2,\ldots$ there is $u_\tau^k\in D(\GG) $ and $\langle u_\tau^k,L_G(u_\tau^k)\rangle_{1-s}<+\infty$.
 By letting $\eps\to 0$ we find that  \eqref{decayLp} holds.
\end{proof}

\subsection{Regularizing effect}

In order to obtain a quantitative decay of a positive logarithmic entropy and of the $L^p$ norms of the discrete solution
we need the two following propositions.

\begin{proposition}\label{prop:DI}
Let $\phi:\R\to\R$ be a convex $C^1$ function and $\tau>0$.
If $a_k$ and $b_k$ satisfies
$$ a_k-a_{k-1} \leq -\tau\phi'(a_k), \qquad b_k-b_{k-1} = -\tau\phi'(b_k), \quad \forall k\in \N$$
and $a_0\leq b_0$, then $a_k\leq b_k$ for every $k\in\N$.
\end{proposition}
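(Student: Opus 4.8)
The plan is to prove the statement by induction on $k$, reducing at each step to a one-dimensional comparison for the map $x\mapsto x+\tau\phi'(x)$. First I would observe that, since $\phi$ is convex and $C^1$, its derivative $\phi'$ is nondecreasing, so the function $\Phi_\tau(x):=x+\tau\phi'(x)$ is strictly increasing (indeed nondecreasing suffices, but monotonicity in the non-strict sense combined with the identity below is enough); in particular $\Phi_\tau$ is injective and order-preserving. The hypotheses can be rewritten as
\[
\Phi_\tau(a_k)=a_k+\tau\phi'(a_k)\le a_{k-1},\qquad \Phi_\tau(b_k)=b_k+\tau\phi'(b_k)=b_{k-1}.
\]

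For the inductive step, assume $a_{k-1}\le b_{k-1}$; I want $a_k\le b_k$. Suppose by contradiction that $a_k>b_k$. Since $\phi'$ is nondecreasing, $\phi'(a_k)\ge\phi'(b_k)$, hence $\Phi_\tau(a_k)=a_k+\tau\phi'(a_k)>b_k+\tau\phi'(b_k)=\Phi_\tau(b_k)=b_{k-1}$. Combining this with the inequality $\Phi_\tau(a_k)\le a_{k-1}$ gives $a_{k-1}>b_{k-1}$, contradicting the inductive hypothesis. The base case $k=0$ is exactly the assumption $a_0\le b_0$, so the induction closes and $a_k\le b_k$ for all $k\in\mathbb{N}$.

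I do not expect any real obstacle here: the only point requiring a word of care is that $\Phi_\tau$ need only be shown nondecreasing (not strictly increasing), and that the strict inequality in the contradiction argument comes from the assumed strict inequality $a_k>b_k$ together with $\tau>0$, not from strict monotonicity of $\phi'$. One should also note that the argument is purely algebraic and uses neither the probabilistic/Wasserstein structure nor any regularity of the sequences beyond the stated recursive (in)equalities, so it applies verbatim in the situations where it will later be invoked (with $a_k$ a discrete $L^p$-type quantity satisfying a differential inequality and $b_k$ the solution of the associated ODE discretization).
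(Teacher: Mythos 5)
Your proof is correct and follows essentially the same route as the paper: both proceed by induction and exploit the (strict) monotonicity of the map $r\mapsto r+\tau\phi'(r)$, which follows from the convexity of $\phi$. The only cosmetic difference is that the paper concludes directly from the chain $a_k+\tau\phi'(a_k)\le a_{k-1}\le b_{k-1}=b_k+\tau\phi'(b_k)$ by invoking strict monotonicity, whereas you unfold this into an explicit contradiction; the content is identical.
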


\begin{proof}
By induction, assuming that $a_{k-1}\leq b_{k-1}$ we have that
$$ a_k+\tau\phi'(a_k) \leq a_{k-1}\leq  b_{k-1} = b_k+\tau\phi'(b_k).$$
Since the function $r\mapsto r+\tau\phi'(r)$ is strictly increasing we conclude.
\end{proof}

\begin{proposition}\label{prop:CL}
Let $\phi:\R\to\R$ be a convex $C^1$ function and $\tau>0$.
Let $b_0\in \R$ and $b_k$ be satisfying
$$ b_k-b_{k-1} = -\tau\phi'(b_k), \quad \forall k\in \N$$
and $b:[0,+\infty)\to\R$ the solution of the Cauchy problem
\begin{equation}\label{CauchyPb}
 b'(t)=-\phi'(b(t)), \qquad b(0)=b_0.
\end{equation}
Then $|b_k-b(k\tau)|\leq \frac{1}{\sqrt{2}}|\phi'(b_0)|\tau$.
\end{proposition}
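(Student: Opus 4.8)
The plan is to use the monotone (gradient-flow) structure so that the error stays bounded uniformly in $k$, and then to squeeze out the constant by a time reparametrization of the continuous solution.

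Replacing $\phi$ by $x\mapsto\phi(-x)$ turns $b_k$ into $-b_k$ and $b(\cdot)$ into $-b(\cdot)$, changing neither $|b_k-b(k\tau)|$ nor $|\phi'(b_0)|$, so I may assume $\phi'(b_0)\ge 0$. First I would record the relevant monotonicity. Since $\phi'$ is nondecreasing, $z\mapsto z+\tau\phi'(z)$ is strictly increasing, and comparing $b_{k-1}+\tau\phi'(b_{k-1})\ge b_{k-1}=b_k+\tau\phi'(b_k)$ (valid inductively, $\phi'(b_{k-1})\ge0$) gives $b_k\le b_{k-1}$ and $\phi'(b_k)=\tau^{-1}(b_{k-1}-b_k)\ge0$. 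On the continuous side $z\mapsto-\phi'(z)$ is nonincreasing, so \eqref{CauchyPb} has a unique solution enjoying a comparison principle, with $\phi'(b(t))\ge0$ and $b$ nonincreasing (if $\phi'(b(t_0))<0$, the first zero $t_1$ of $\phi'(b(\cdot))$ would force $b\equiv b(t_1)$ on $[t_1,\infty)$ by uniqueness, a contradiction). Hence $g:=\phi'\circ b$ is nonincreasing with $0\le g\le g(0)=\phi'(b_0)$.

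Next, since $g$ is nonincreasing, $b((k-1)\tau)-b(k\tau)=\int_{(k-1)\tau}^{k\tau}g\ge\tau\,\phi'(b(k\tau))$, so $c_k:=b(k\tau)$ satisfies $c_k-c_{k-1}\le-\tau\phi'(c_k)$ with $c_0=b_0$, and Proposition~\ref{prop:DI} yields $b(k\tau)\le b_k$; thus $e_k:=b_k-b(k\tau)\ge0$. Because $b(k\tau)\le b_k\le b(0)$ and $b$ is continuous, set $s_k:=\sup\{t\in[0,k\tau]:b(t)\ge b_k\}$, so that $b(s_k)=b_k$ and $(s_k)$ is nondecreasing (since $b_k\le b_{k-1}$). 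Writing $a_k:=\phi'(b_k)=g(s_k)$ (nonincreasing, $a_k\in[0,a_0]$ with $a_0=\phi'(b_0)$), the bound $g(t)\le g(s_k)$ for $t\ge s_k$ gives $e_k=\int_{s_k}^{k\tau}g(t)\,\d t\le(k\tau-s_k)\,a_k$, while $\tau a_k=b_{k-1}-b_k=\int_{s_{k-1}}^{s_k}g\le(s_k-s_{k-1})\,a_{k-1}$ gives $\tau-(s_k-s_{k-1})\le\tau(1-a_k/a_{k-1})$ (the degenerate case $a_m=0$ being trivial, since then $e_k=0$ for $k\ge m$).

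Summing and using $1-x\le-\log x$ on $(0,1]$,
\[
k\tau-s_k=\sum_{j=1}^k\bigl(\tau-(s_j-s_{j-1})\bigr)\le\tau\sum_{j=1}^k\Bigl(1-\tfrac{a_j}{a_{j-1}}\Bigr)\le\tau\log\tfrac{a_0}{a_k},
\]
whence $e_k\le\tau\,a_k\log(a_0/a_k)\le\tau\max_{0<a\le a_0}a\log(a_0/a)=\tau a_0/e\le\tfrac1{\sqrt2}|\phi'(b_0)|\tau$, which is the claim. The step I expect to be the main obstacle is getting uniformity in $k$: naively propagating the one-step discretisation error by a Gronwall argument gives only something like $k\tau\cdot O(\tau)$; what rescues the estimate is the one-sided-Lipschitz (monotone) structure of both the scheme and the flow — packaged above through Proposition~\ref{prop:DI} and the reparametrization $s_k$ — which already delivers a $k$-uniform $O(\tau)$ bound, after which the sharp constant is a modest bonus (the argument in fact yields $1/e$, and the value $1/\sqrt2$ in the statement is presumably the outcome of a coarser final estimate).
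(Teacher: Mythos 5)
Your argument is correct, and it is genuinely different from what the paper does: the paper gives no proof at all, it simply cites the general a priori error estimate for the implicit Euler scheme for gradient flows in metric spaces (Nochetto--Savar\'e--Verdi, and Theorem~4.0.7 of Ambrosio--Gigli--Savar\'e), from which the constant $\tfrac1{\sqrt2}$ is read off. Your proof, in contrast, is a self-contained elementary argument tailored to the one-dimensional scalar setting, built on the comparison principle (Proposition~\ref{prop:DI}) and the clever time-reparametrization $s_k$ that exhibits $b_k$ as a retarded sample of the continuous flow, so that $e_k=\int_{s_k}^{k\tau}g$ with $g=\phi'\circ b$ nonincreasing. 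The telescoping sum of $1-a_j/a_{j-1}\le\log(a_{j-1}/a_j)$ then gives $k\tau-s_k\le\tau\log(a_0/a_k)$, and optimizing $a\mapsto a\log(a_0/a)$ yields the bound $|b_k-b(k\tau)|\le \tau\,|\phi'(b_0)|/e$, which is \emph{sharper} than the $\tau\,|\phi'(b_0)|/\sqrt2$ claimed. The trade-off is scope: the cited black-box result holds for gradient flows of geodesically convex functionals in general metric spaces, while your argument exploits the scalar total order (via the sup defining $s_k$) and so does not generalize; but for the purpose it is put to here (a scalar ODE comparison in Lemma~\ref{lemma:decay2}) your proof is entirely adequate and arguably more illuminating. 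Two small points of rigor worth tightening in a final write-up: the "first zero $t_1$ of $\phi'(b(\cdot))$" should be any zero in $[0,t_0)$ (which exists by the intermediate value theorem since $\phi'(b(0))\ge0$), and uniqueness of the ODE should be justified by the one-sided Lipschitz (monotone decreasing) structure of $-\phi'$, since $\phi'$ is merely continuous; both are standard and easily inserted.
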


\begin{proof}
The result is the error estimate for the Euler implicit discretization scheme. 
See for instance the general expression derived by Nochetto-Savar\'e-Verdi \cite{NSV} and 
\cite[Theorem 4.0.7]{AGS}.
\end{proof}

In the following of the paper we denote by $\KK:\PP_2(\R^d)\to [0,+\infty]$
the positive entropy defined by $\KK(u):=\int_{\R^d}u(x)\log (u(x)+1) \,\d x$ if $u$ is absolutely continuous 
with respect to the Lebesgue measure and
$\KK(u)=+\infty$ otherwise, which is a displacement convex entropy according to Definition \ref{entr}.

\begin{lemma}\label{lemma:decay2} Let $\{u_\tau^k:k=0,1,2,\ldots\}$ be the sequence given by  {\rm Proposition \ref{prop:existenceMM}}.
There holds 
\begin{equation}\label{discreteREK}
    \KK(u_\tau^k) \leq \min \{ \KK(u_\tau^0),C_0(k\tau)^{-\gamma_0}\} + \tfrac{\tilde C_0}{\sqrt2}\,\tau (\KK(u_\tau^0))^{\beta_0}     ,\qquad k=1,2,\ldots,
\end{equation}
where $\gamma_0:=\frac{1}{2}\frac{d}{d+2(1-s)}$,  $\beta_0:=\frac{3d+4(1-s)}{d}$, $\tilde C_0:=2^{-\frac{3d+4(1-s)}{d}}{\RRR A_{d,s}}$,
$C_0=(\tilde C_0(\beta_0-1))^{-\gamma_0}$, {\RRR  $A_{d,s}:=S_{d,1-s}^{-2}$ if $d\ge 2$, $A_{1,s}:=S_{1,\frac{1-s}{4-2s}}^{2s-2}$ and $S_{d,r}$ is defined by \eqref{best}.}

\noindent Moreover, for every $p\in(1,+\infty)$ there holds 
\begin{equation}\label{discreteRELp}
    \|u_\tau^k\|^p_{L^p(\R^d)} \leq \min \{ \|u_\tau^0\|^p_{L^p(\R^d)},C_p^p\,(k\tau)^{-p\gamma_p}\}  
    + \tfrac{\tilde C_p}{\sqrt{2}}\,\tau\,\|u_\tau^0\|^{p\beta_p}_{L^p(\R^d)}   ,\quad k=1,2,\ldots,
\end{equation}
where $\gamma_p:=\frac{p-1}{p}\frac{d}{d+2(1-s)}$,   $\beta_p:=\frac{pd+2(1-s)}{(p-1)d}$, $\tilde C_p:=\frac{4p(p-1)}{(p+1)^2}{\RRR B_{d,s}}$,
 $C_p:=(\tilde C_p(\beta_p-1))^{-\gamma_p}$, {\RRR  $B_{d,s}:=S_{d,1-s}^{-2}$ if $d\ge 2$ and $B_{1,s}:=S_{1,\frac{1-s}{4-2s}}^{4s-8}$.}

\end{lemma}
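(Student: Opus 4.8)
The plan is to combine the flow–interchange estimate \eqref{decayLp} from Lemma \ref{lemma:decay1} with a suitable Gagliardo–Nirenberg–Sobolev inequality, so as to produce a closed discrete differential inequality for $\KK(u_\tau^k)$ (respectively $\|u_\tau^k\|_{L^p}^p$), and then to integrate this inequality by comparison with the corresponding ODE. For the $L^p$ statement, I would choose $\GG$ to be the displacement convex entropy with density $G(r)=\frac{1}{p-1}r^p$, so that $L_G(r)=r G'(r)-G(r)=r^p$ and \eqref{decayLp} reads $\langle u_\tau^k,(u_\tau^k)^p\rangle_{1-s}\le (p-1)^{-1}(\GG(u_\tau^{k-1})-\GG(u_\tau^k))=\tfrac{1}{p}\cdot\tfrac{1}{p-1}(\|u_\tau^{k-1}\|_{L^p}^p-\|u_\tau^k\|_{L^p}^p)$. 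Applying the Stroock–Varopoulos inequality \eqref{sv} bounds the left-hand side below by $\tfrac{4p}{(p+1)^2}\|(u_\tau^k)^{(p+1)/2}\|_{\dot H^{1-s}}^2$; then the Sobolev-type inequality \eqref{GNS} (for $d\ge2$) or \eqref{extrasobolev1} (for $d=1$), together with mass conservation $\|u_\tau^k\|_{L^1}=1$, converts $\|(u_\tau^k)^{(p+1)/2}\|_{\dot H^{1-s}}^2$ into a power of $\|u_\tau^k\|_{L^p}^p$. This yields a discrete inequality of the form $a_k-a_{k-1}\le -\tau\,\tilde C_p a_k^{\beta_p}$ with $a_k:=\|u_\tau^k\|_{L^p(\R^d)}^p$, where the exponent $\beta_p$ and the constant $\tilde C_p$ are exactly as stated once one bookkeeps the powers (using $q_2=2+2(1-s)/d$ in \eqref{GNS} and collecting the $S_{d,1-s}$ factors into $B_{d,s}$; in $d=1$ the analogous computation with \eqref{extrasobolev1} produces $B_{1,s}=S_{1,(1-s)/(4-2s)}^{4s-8}$). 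For the positive entropy $\KK$ I would argue in parallel: $\KK$ has density $G(r)=r\log(r+1)$, so $L_G(r)=rG'(r)-G(r)=r^2/(r+1)$; I would use the algebraic bound $\langle v,L_G(v)\rangle_{1-s}\ge c\,\langle \sqrt{v},\sqrt{v}\rangle_{1-s}=c\|\sqrt{v}\|_{\dot H^{1-s}}^2$ type estimate (a variant of Stroock–Varopoulos / the pointwise inequality in Proposition \ref{prop:BB}) and then \eqref{eq:Bs}/\eqref{GNS} with $p$ near $1$, obtaining a discrete inequality $a_k-a_{k-1}\le -\tau\,\tilde C_0 a_k^{\beta_0}$ with $a_k:=\KK(u_\tau^k)$, $\beta_0=\frac{3d+4(1-s)}{d}$.

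Next I would extract the two asserted bounds from the discrete inequality $a_k-a_{k-1}\le-\tau\,\tilde C a_k^{\beta}$ with $\beta>1$. Setting $\phi(r):=\tfrac{\tilde C}{\beta}|r|^{\beta}$ (a convex $C^1$ function since $\beta>1$), we have $\phi'(a_k)=\tilde C a_k^{\beta-1}\,\mathrm{sgn}(a_k)$, hmm — more precisely I would write the inequality as $a_k-a_{k-1}\le-\tau\phi'(a_k)$ with $\phi'(r)=\tilde C r^{\beta-1}$ on $r\ge0$, extended convexly. Let $b(t)$ solve $b'=-\phi'(b)$, $b(0)=a_0$; an explicit integration of $b'=-\tilde C b^{\beta-1}$ gives $b(t)=\big(a_0^{2-\beta}+\tilde C(\beta-2)t\big)^{-1/(\beta-2)}$ when $\beta\ne2$, and in all cases $b(t)\le(\tilde C(\beta-1)t)^{-1/(\beta-1)}$ for $t>0$ (this is where the exponents $\gamma_p=1/(\beta_p-1)\cdot(\text{normalization})$ and the constant $C_p=(\tilde C_p(\beta_p-1))^{-\gamma_p}$ come from, after translating back $a_k=\|u_\tau^k\|_{L^p}^p$ and noting $p\gamma_p=\frac{1}{\beta_p-1}$). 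Let $(b_k)$ be the implicit Euler discretization $b_k-b_{k-1}=-\tau\phi'(b_k)$ with $b_0=a_0$. By Proposition \ref{prop:DI} (comparison for sub/super solutions of the implicit scheme) we get $a_k\le b_k$; by Proposition \ref{prop:CL} (error estimate for implicit Euler) we get $b_k\le b(k\tau)+\tfrac{1}{\sqrt2}|\phi'(b_0)|\tau=b(k\tau)+\tfrac{1}{\sqrt2}\tilde C a_0^{\beta-1}\tau$. Combining, $a_k\le b(k\tau)+\tfrac{\tilde C}{\sqrt2}a_0^{\beta-1}\tau\le (\tilde C(\beta-1)k\tau)^{-1/(\beta-1)}+\tfrac{\tilde C}{\sqrt2}a_0^{\beta-1}\tau$, and since also trivially $a_k\le a_0$ (monotonicity, from the $\min$ in \eqref{decayLp}), we may replace the ODE bound by its minimum with $a_0$, giving precisely \eqref{discreteRELp}; the $\KK$ case \eqref{discreteREK} is identical with $\beta_0$, $\tilde C_0$ in place of $\beta_p$, $\tilde C_p$. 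One checks that $\beta_p-1=\frac{pd+2(1-s)}{(p-1)d}-1=\frac{d+2(1-s)}{(p-1)d}=\frac{1}{p\gamma_p}$, confirming the exponent on $k\tau$, and that $a_0^{\beta_p-1}$ matches the claimed $\|u_\tau^0\|_{L^p}^{p\beta_p}$ up to the identity $\beta_p\cdot p\cdot\frac{1}{p}\cdots$ — i.e.\ one must be careful that "$a_0^{\beta_p}$" in my normalization equals "$\|u_\tau^0\|_{L^p}^{p\beta_p}$" in the statement, which holds since $a_0=\|u_\tau^0\|_{L^p}^p$ and the exponent in \eqref{discreteRELp} is $p\beta_p$; the mismatch $\beta_p$ vs $\beta_p-1$ is absorbed because the paper's $\phi'$-type term is $a_0^{\beta_p}$, not $a_0^{\beta_p-1}$, reflecting that the underlying discrete inequality is written with $\phi'(r)\sim r^{\beta_p}$ after a reindexing of which power of the $L^p$ norm is taken as the unknown. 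I would double-check this bookkeeping carefully in the write-up.

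The main obstacle I expect is precisely this constant- and exponent-tracking: making the Stroock–Varopoulos step, the Gagliardo–Nirenberg step, and the ODE-comparison step fit together so that the three constants $S_{d,1-s}$ (or $S_{1,(1-s)/(4-2s)}$ in $d=1$), $\tfrac{4p}{(p+1)^2}$, and $(\beta_p-1)^{-\gamma_p}$ combine into exactly $\tilde C_p=\tfrac{4p(p-1)}{(p+1)^2}B_{d,s}$ and $C_p=(\tilde C_p(\beta_p-1))^{-\gamma_p}$ as claimed, and similarly for the $d=1$ case where the nonstandard Sobolev inequalities \eqref{eq:Bs}, \eqref{extrasobolev1} must be invoked in place of \eqref{GNS}. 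A secondary technical point is justifying that \eqref{decayLp} may legitimately be applied with $\GG$ the $r^p/(p-1)$-entropy and with the $\KK$-entropy — i.e.\ that these are displacement convex entropies in the sense of Definition \ref{entr} (stated in the excerpt for $\KK$, and routine for $r^p$ when $p>1$, since $r\mapsto r^d(r^{-d})^p=r^{d(1-p)}$ is convex decreasing and the behaviour at $0$ is harmless) and that $u_\tau^k$ lies in their domains, which Lemma \ref{lemma:decay1} already guarantees. Everything else is the comparison machinery of Propositions \ref{prop:DI} and \ref{prop:CL}, which is quoted ready-made.
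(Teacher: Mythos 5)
Your strategy for the $L^p$ case matches the paper's exactly: apply Lemma \ref{lemma:decay1} with $\GG=\GG_p$, use the Stroock--Varopoulos inequality \eqref{sv}, then \eqref{GNS} (resp. \eqref{extrasobolev1}) with $\|u_\tau^k\|_{L^1}=1$, and close the discrete inequality with Propositions \ref{prop:DI} and \ref{prop:CL}. Your bookkeeping worry is resolved by noting that the discrete inequality $a_k-a_{k-1}\le -\tau\tilde C a_k^{\beta}$ corresponds to $\phi'(a)=\tilde C a^{\beta}$, i.e. $\phi(a)=\tfrac{\tilde C}{\beta+1}a^{\beta+1}$ (not $\tfrac{\tilde C}{\beta}a^{\beta}$ as you first wrote), whence $\phi'(b_0)=\tilde C a_0^{\beta}$ and the error term is exactly $\tfrac{\tilde C}{\sqrt2}\tau a_0^{\beta}$. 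With that fix the $L^p$ argument is correct.

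The genuine gap is in your treatment of the $\KK$ case. You propose a bound of the type $\langle v,L_G(v)\rangle_{1-s}\ge c\|\sqrt v\|^2_{\dot H^{1-s}}$ followed by Sobolev. This does not work. First, the pointwise inequality $(a-b)\bigl(L_G(a)-L_G(b)\bigr)\ge c(\sqrt a-\sqrt b)^2$ with $L_G(r)=r^2/(r+1)$ fails near $0$: for small $a,b$, $L_G(r)\approx r^2$ so the left side behaves like $(a-b)^2(a+b)$ while the right side behaves like $(a-b)^2/(\sqrt a+\sqrt b)^2$, and their ratio vanishes. Second, even granting such a bound, Sobolev applied to $\sqrt u$ gives control of an $L^q$ norm of $u$, not of $\int u\log(u+1)$, so it does not close the loop in terms of $\KK$. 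The paper's actual argument is different: it uses the $1$-Lipschitz property $L_G'(r)=r/(r+1)<1$ and Proposition \ref{prop:BB} to get $\langle u,L_G(u)\rangle_{1-s}\ge\langle L_G(u),L_G(u)\rangle_{1-s}=\|L_G(u)\|^2_{\dot H^{1-s}}$; it then applies the Sobolev inequality \eqref{FractionalGNS} (or \eqref{eq:Bs} for $d=1$) to $L_G(u)$, using $\|L_G(u)\|_{L^1}\le\|u\|_{L^1}=1$, to get $\|L_G(u)\|^2_{\dot H^{1-s}}\ge A_{d,s}\int(L_G(u))^q\,\d x$ with $q=2+2(1-s)/d$; finally, writing $\int(L_G(u))^q\,\d x=\int\bigl(u/(u+1)^{q/(2q-1)}\bigr)^{2q-1}u\,\d x$ and using Jensen's inequality with respect to $u\,\d x$ together with the elementary pointwise estimate $2u^2/(u+1)^{q/(2q-1)}\ge u\log(u+1)$ yields $\int(L_G(u))^q\,\d x\ge 2^{1-2q}(\KK(u))^{2q-1}$, which produces the correct $\beta_0=2q-1$. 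None of these three ingredients (Lipschitz step, Sobolev applied to $L_G(u)$ itself, Jensen plus the pointwise bound) appears in your sketch, and without them you cannot convert the dissipation into a power of $\KK(u)$.
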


\begin{proof}

We shall apply Lemma \ref{lemma:decay1} to the particular cases $\GG=\KK$ and $\GG=\GG_p$, where $\GG_p$ is the displacement convex entropy with power density function $G_p(u) = \frac{1}{p-1}u^p$, for $p\in(1,+\infty)$. 

Let us start with $\GG=\KK$, so that the density function is $G(u)=u(\log u+1)$.
%
In this case $L_G(u):=uG'(u)-G(u)= \frac{u^2}{u+1}$.
Since $L_G$ is increasing on $[0,+\infty)$ and $L_G'(u)=\frac{u}{u+1}<1$ by Proposition \ref{prop:BB}
we have, for any $k\in\mathbb{N}$,
\begin{equation}\label{scalarestimate} 
	+\infty>\langle u_\tau^k,L_G(u_\tau^k)\rangle_{1-s} \geq  \langle L_G(u_\tau^k),L_G(u_\tau^k)\rangle_{1-s} 
	= \|L_G(u_\tau^k)\|^2_{\dot H^{1-s}(\R^d)} .
\end{equation}
Since $0\leq L_G(u) < u$ we have $\|L_G(u_\tau^k)\|_{L^1(\R^d)}\leq \|u_\tau^k\|_{L^1(\R^d)} =1$. 
Therefore $L_G\circ u_\tau^k\in \dot H^{1-s}(\mathbb{R}^d)\cap L^1(\mathbb{R}^d)$.
{\RRR Using \eqref{FractionalGNS} in the case $d\ge 2$ and \eqref{eq:Bs} in the case $d=1$ we obtain
\begin{equation}\label{scalar2} \|L_G(u_\tau^k)\|^2_{\dot H^{1-s}(\R^d)} \geq A_{d,s} \int_{\R^d} (L_G(u_\tau^k))^q\,\d x\end{equation}	
for $q:=2+2(1-s)/d$, where $A_{d,s}:=S_{d,1-s}^{-2}$ if $d\ge 2$, $A_{1,s}:=S_{1,\frac{1-s}{4-2s}}^{2s-2}$.}
By Jensen inequality we have
$$  \int_{\R^d} (L_G(u_\tau^k))^q\,\d x = \int_{\R^d} \frac{(u_\tau^k)^{2q-1}}{(u_\tau^k+1)^q}\,u_\tau^k\,\d x\geq\Big(\int_{\R^d} \frac{u_\tau^k}{(u_\tau^k+1)^{q/(2q-1)}}\,u_\tau^k\,\d x \Big)^{2q-1}
$$
and an elementary computation shows that, for any $u\in [0,+\infty)$, there holds
$$\frac{2u^2}{(u+1)^{q/(2q-1)}} \geq  u\ln(u+1),$$
then we have
\begin{equation}\label{scalar3}
\int_{\mathbb{R}^d}(L_G(u_\tau^k))^q\,\d x \geq 2^{1-2q}(\KK (u_\tau^k))^{2q-1}.
\end{equation}
Thanks to \eqref{scalarestimate}, \eqref{scalar2} and \eqref{scalar3} we find
\[
\langle u_\tau^k,L_G(u_\tau^k)\rangle_{1-s} \geq \tilde C_0 (\KK(u_\tau^k))^{2q-1},
\]
where {\RRR $\tilde C_0:=2^{1-2q}A_{d,s}=2^{-\frac{3d+4(1-s)}{d}}A_{d,s}$.}
By applying Lemma \ref{lemma:decay1} we obtain
\begin{equation}\label{newE}
\KK(u_\tau^{k})+\tilde C_0\tau(\KK(u_\tau^k))^{\beta_0}\le \KK(u_\tau^{k-1}),\qquad k=1,2,\ldots,
\end{equation}
%
%
%
where $\beta_0:=2q-1=\frac{3d+4(1-s)}{d}$.

Let us now consider, for $p\in (1,+\infty)$ the case $\GG=\GG_p$, with density function $G=G_p$.
Taking into account that $L_{G_p}(u)=u^p$, by  Lemma \ref{lemma:decay1} and the Stroock-Varopoulos inequality (Proposition \ref{prop:BB}), 
we have $(u_\tau^k)^{(p+1)/2}\in \dot H^{1-s}(\mathbb{R}^d)$ and
$$ \tau \frac{4p(p-1)}{(p+1)^2} \|(u_\tau^k)^{(p+1)/2}\|^2_{\dot H^{1-s}(\R^d)}
+\|u_\tau^k\|^p_{L^p(\R^d)}  \leq \|u_\tau^{k-1}\|^p_{L^p(\R^d)}\qquad k=1,2,\ldots. $$
{\RRR By \eqref{GNS} with $r=1-s$ in the case $d\ge 2$ and \eqref{extrasobolev1} in the case $d=1$,
both with the choice $u=u_\tau^k$,} we obtain
\begin{equation}\label{E2}
\tau \frac{4p(p-1)}{(p+1)^2}B_{d,s}( \|u_\tau^k\|^p_{L^p(\R^d)})^{\beta_p}
+\|u_\tau^k\|^p_{L^p(\R^d)}  \leq \|u_\tau^{k-1}\|^p_{L^p(\R^d)}, \qquad k=1,2,\ldots,
\end{equation}
where $\beta_p:=\frac{pd+2(1-s)}{(p-1)d}$,  {\RRR $B_{d,s}:=S_{d,1-s}^{-2}$ if $d\ge 2$ and $B_{1,s}:=S_{1,\frac{1-s}{4-2s}}^{4s-8}$.}

Now we are ready to conclude for both the cases $\GG=\KK$ and $\GG=\GG_p$.
Setting $a_k:=\KK(u_\tau^k)$ in the first case and
$a_k:=\|u_\tau^k\|^p_{L^p(\R^d)}$ in the second case, the relations \eqref{newE}, \eqref{E2} read
$$a_k - a_{k-1}\leq -\tau C a_k^\beta,$$
where $C=\tilde C_0$, $\beta=\beta_0$  in the first case and
{\RRR $C=\tilde C_p:=\frac{4p(p-1)}{(p+1)^2}B_{d,s}$,} $\beta=\beta_p$ in the second case. In both cases,
we apply Proposition \ref{prop:DI} and Proposition  \ref{prop:CL} with the choice $\phi(a)=\frac{C}{\beta+1}a^{\beta+1}$.
The solution of the Cauchy problem \eqref{CauchyPb} is then $b(t)=(b_0+C(\beta-1)t)^{1/(1-\beta)}$.
Since $\beta>1$, the function $y\mapsto y^{1/(1-\beta)}$ is decreasing in $(0,+\infty)$.
Consequently we have $b(t)\leq \min\{b_0,(C(\beta-1)t)^{1/(1-\beta)}\}$.
Finally $$a_k\le b_k\leq b(k\tau)+|b_k-b(k\tau)|\leq b(k\tau) + \frac{1}{\sqrt{2}}\phi'(b_0)\tau.$$
With the choice  $b_0=\KK(u_\tau^0)$  in the first case, we obtain \eqref{discreteREK}. With the choice
 $b_0=\|u_\tau^0\|^p_{L^p(\R^d)}$  in the second case,    
 we  obtain \eqref{discreteRELp}.
 
%

{\RRR 
}
\end{proof}

We may now pass to the limit as $\tau\to 0$  and prove the decay estimates for the solution.
\begin{theorem}\label{theop}  Let $\{u_\tau^k:k=0,1,2,\ldots\}$ be the sequence given by  {\rm Proposition \ref{prop:existenceMM}}.
If $u\in AC^2([0,+\infty);(\PP_2({\R}^d),W))$ is a corresponding limit curve given by {\rm Theorem \ref{th:convergence1}}, then
\begin{equation*}\label{REK}
    \KK(u(t)) \leq C_0 t^{-\gamma_0},\qquad t>0,
\end{equation*}
where $C_0,\gamma_0$ are positive constants, whose explicit value is found in {\rm Lemma \ref{lemma:decay2}}, 
and
\begin{equation*}\label{REK1}
    \KK(u(t)) \leq \lim_{\tau\to 0}\KK(u_\tau^0)\qquad t>0.
\end{equation*}
Moreover,
for every $p\in(1,+\infty)$ there holds 
\begin{equation*}\label{RELp}
    \|u(t)\|_{L^p(\R^d)} \leq C_pt^{-\gamma_p},\qquad t>0,
\end{equation*}
where the positive constants $C_p,\gamma_p$ are found in {\rm Lemma \ref{lemma:decay2}} as well, 
and
\begin{equation*}\label{RELp1}
    \|u(t)\|_{L^p(\R^d)} \leq \lim_{\tau\to 0}\|u_\tau^0\|_{L^p(\R^d)},\qquad t>0.
\end{equation*}
\end{theorem}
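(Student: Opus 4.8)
The plan is to pass to the limit $\tau\to 0$ in the discrete estimates \eqref{discreteREK} and \eqref{discreteRELp} of Lemma \ref{lemma:decay2}, using the narrow convergence $u_{\tau_n}(t)\to u(t)$ provided by Theorem \ref{th:convergence1}. First I would fix $t>0$ and a vanishing sequence $\tau_n$ realizing the convergence. Recalling $u_{\tau_n}(t)=u_{\tau_n}^{\lceil t/\tau_n\rceil}$, set $k_n:=\lceil t/\tau_n\rceil$, so that $k_n\tau_n\to t$ (more precisely $t\le k_n\tau_n< t+\tau_n$). The two key facts I need are: (i) the remainder terms in \eqref{discreteREK}–\eqref{discreteRELp}, which are $\tfrac{\tilde C_0}{\sqrt2}\tau_n(\KK(u_{\tau_n}^0))^{\beta_0}$ and $\tfrac{\tilde C_p}{\sqrt2}\tau_n\|u_{\tau_n}^0\|_{L^p}^{p\beta_p}$, vanish as $n\to\infty$; (ii) the functionals $u\mapsto\KK(u)$ and $u\mapsto\|u\|_{L^p(\R^d)}$ are sequentially lower semicontinuous with respect to narrow convergence, so that $\KK(u(t))\le\liminf_n\KK(u_{\tau_n}(t))$ and $\|u(t)\|_{L^p}\le\liminf_n\|u_{\tau_n}(t)\|_{L^p}$.

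For step (i), the regularized initial datum is $u_{\tau_n}^0=\Gamma_{\omega(\tau_n)}\ast u_0$ with $\omega(\tau)=-1/\ln\tau\to 0$. I would estimate $\KK(u_{\tau_n}^0)$ and $\|u_{\tau_n}^0\|_{L^p}$ in terms of the heat kernel at time $\omega(\tau_n)$: by Young's convolution inequality $\|\Gamma_t\ast u_0\|_{L^p}\le\|\Gamma_t\|_{L^p}\|u_0\|_{L^1}=\|\Gamma_t\|_{L^p}$, and $\|\Gamma_t\|_{L^p}^p$ behaves like $c\,t^{-\frac{d}{2}(p-1)}$; similarly, using $\log(a+1)\le\log a^+ + \text{const}$ type bounds together with the explicit Gaussian, $\KK(u_{\tau_n}^0)$ grows at most polylogarithmically, like $C(1+|\ln\omega(\tau_n)|)=C(1+\ln\ln(1/\tau_n))$, plus a moment contribution controlled uniformly by \eqref{boundA}. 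In either case, multiplying by $\tau_n$ and raising to the fixed power $\beta_0$ (resp.\ $p\beta_p$), the resulting quantity is $\tau_n$ times something that grows only like a power of $\ln(1/\tau_n)$ or like $\tau_n^{-\alpha}$ with $\alpha<1$; one checks the exponents: $\tau_n\cdot(\tau_n^{-\frac{d}{2}(p-1)})^{\beta_p}$ — here one must verify that $1-\tfrac{d}{2}(p-1)\beta_p>0$, i.e.\ plug in $\beta_p=\tfrac{pd+2(1-s)}{(p-1)d}$ to get the exponent $1-\tfrac{p d+2(1-s)}{2}<0$... so in fact this naive bound is \emph{not} enough and one should instead use the $\min$ in \eqref{discreteRELp}: the point is that the first term is already bounded by $C_p^p(k_n\tau_n)^{-p\gamma_p}\to C_p^p t^{-p\gamma_p}$, while for the remainder we only need that along the subsequence it stays bounded — but that is false in general. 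So the honest route is: observe $\beta_p>1$ and that the discrete recursion \eqref{E2} is \emph{autonomous}, hence iterating it from $k=1$ already absorbs the initial datum; concretely, from $a_k-a_{k-1}\le -\tau C a_k^{\beta}$ and Proposition \ref{prop:DI}/\ref{prop:CL} with $b_0=a_0$ we got $a_k\le(C(\beta-1)k\tau)^{1/(1-\beta)}+\tfrac1{\sqrt2}\phi'(a_0)\tau$, but one may equally well start the comparison at step $1$: since $a_1\le(\tilde C\tau)^{-1/(\beta-1)}$ regardless of $a_0$ (the map $r\mapsto r+\tau\tilde C r^\beta$ is a bijection of $[0,\infty)$ sending $a_1\mapsto a_0$, forcing $\tau\tilde C a_1^\beta\le a_0$ is the wrong direction — rather $a_1\le a_0$ and also, from $a_1+\tau\tilde C a_1^\beta\le a_0$... ).

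Given the above subtlety, the cleanest plan is: use \eqref{discreteREK}–\eqref{discreteRELp} exactly as stated, and handle the remainder by the \emph{one-step smoothing} estimate. Apply \eqref{discreteRELp} shifted by one index: $\|u_{\tau_n}^{k_n}\|_{L^p}^p\le\min\{\|u_{\tau_n}^1\|_{L^p}^p, C_p^p((k_n-1)\tau_n)^{-p\gamma_p}\}+\tfrac{\tilde C_p}{\sqrt2}\tau_n\|u_{\tau_n}^1\|_{L^p}^{p\beta_p}$ — no, same issue. The genuinely correct device, which I would use, is the following: from \eqref{E2} at $k=1$, $\tfrac{\tilde C_p}{\sqrt2}\tau\|u_\tau^1\|_{L^p}^{p\beta_p}\le\tfrac1{\sqrt2\,(\beta_p-1)}\cdot\tfrac{\text{(telescoped bound)}}{}$; more directly, \eqref{E2} gives $\tau\tilde C_p(\|u_\tau^k\|_{L^p}^p)^{\beta_p}\le\|u_\tau^{k-1}\|_{L^p}^p$, hence $\|u_\tau^k\|_{L^p}^p\le(\tau\tilde C_p)^{-1/\beta_p}(\|u_\tau^{k-1}\|_{L^p}^p)^{1/\beta_p}$, so after the first step $\|u_\tau^1\|_{L^p}^p\le(\tau\tilde C_p)^{-1/\beta_p}\|u_\tau^0\|_{L^p}^{p/\beta_p}$, and therefore the remainder $\tau\|u_\tau^1\|_{L^p}^{p\beta_p}\le\tau\cdot(\tau\tilde C_p)^{-1}\|u_\tau^0\|_{L^p}^{p}=\tilde C_p^{-1}\|u_\tau^0\|_{L^p}^p$ — still multiplied by $\|u_{\tau}^0\|_{L^p}^p$ which blows up. Iterating twice: $\|u_\tau^2\|_{L^p}^{p}\le(\tau\tilde C_p)^{-(1+1/\beta_p)/\beta_p... }$; continuing, after $\sim\log(\|u_\tau^0\|)$ steps the dependence on the initial datum is killed entirely and one reaches the universal bound $\|u_\tau^{j}\|_{L^p}^p\lesssim(\tau\tilde C_p(\beta_p-1)j)^{-1/(\beta_p-1)}$, valid for \emph{all} $j\ge1$ — this is exactly the $C_p^p(k\tau)^{-p\gamma_p}$ term, and is in fact how $C_p$ is defined. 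The remainder then, re-derived starting the comparison at index $j_0$ with $j_0$ large but fixed, is $\tfrac{\tilde C_p}{\sqrt2}\tau\|u_\tau^{j_0}\|_{L^p}^{p\beta_p}\lesssim\tau\cdot(\tau j_0)^{-\beta_p/(\beta_p-1)}\to 0$ as $\tau\to0$ with $j_0\sim 1/\sqrt\tau$, say.

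To summarize the proof I would write: fix $t>0$; choose the subsequence from Theorem \ref{th:convergence1}; by Lemma \ref{lemma:decay2} applied with the comparison started at a suitably chosen index $j_0(\tau_n)\to\infty$ with $j_0(\tau_n)\tau_n\to 0$ (or, simpler, just cite \eqref{discreteREK}–\eqref{discreteRELp} together with the uniform-in-$n$ bound $\tau_n(\KK(u_{\tau_n}^0))^{\beta_0}\to 0$, $\tau_n\|u_{\tau_n}^0\|_{L^p}^{p\beta_p}\to 0$ which follows from $u_{\tau_n}^0=\Gamma_{\omega(\tau_n)}\ast u_0$, $\|\Gamma_t\|_{L^p}\sim t^{-\frac d2(1-1/p)}$, $\omega(\tau_n)=-1/\ln\tau_n$, and checking $\tau_n\cdot\omega(\tau_n)^{-\frac d2(p-1)\beta_p}\to 0$ since $\omega(\tau_n)^{-1}=\ln(1/\tau_n)$ grows only logarithmically while $\tau_n\to0$ polynomially — this is the clean argument and it works because the divergent factor is merely a power of $\ln(1/\tau_n)$), we get $\KK(u_{\tau_n}(t))=\KK(u_{\tau_n}^{k_n})\le C_0(k_n\tau_n)^{-\gamma_0}+o(1)$ and $\|u_{\tau_n}(t)\|_{L^p}^p\le C_p^p(k_n\tau_n)^{-p\gamma_p}+o(1)$ with $k_n\tau_n\to t$; likewise $\KK(u_{\tau_n}(t))\le\KK(u_{\tau_n}^0)+o(1)$ and $\|u_{\tau_n}(t)\|_{L^p}^p\le\|u_{\tau_n}^0\|_{L^p}^p$. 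Then take $\liminf_{n\to\infty}$ and invoke narrow lower semicontinuity of $\KK$ (it is a displacement convex entropy, hence narrowly l.s.c., as recalled before Definition \ref{entr}) and of $\|\cdot\|_{L^p(\R^d)}$ (standard: it is the supremum over $\phi\in C_c$, $\|\phi\|_{L^{p'}}\le1$, of the linear narrowly-continuous functionals $u\mapsto\int\phi\,u$, for $p<\infty$) to conclude $\KK(u(t))\le C_0t^{-\gamma_0}$, $\KK(u(t))\le\lim_{\tau\to0}\KK(u_\tau^0)$, $\|u(t)\|_{L^p}\le C_pt^{-\gamma_p}$, and $\|u(t)\|_{L^p}\le\lim_{\tau\to0}\|u_\tau^0\|_{L^p}$. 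The main obstacle is precisely the control of the remainder terms as $\tau\to0$: since $\|u_\tau^0\|_{L^p}$ and $\KK(u_\tau^0)$ are not bounded uniformly in $\tau$ (they may blow up as $u_0$ is merely in $\dot H^{-s}\cap\PP_2$), one must carefully balance the explicit rate $\omega(\tau)=-1/\ln\tau$ of the Gaussian regularization against the polynomial decay of $\tau$, or alternatively re-run the recursion starting from an intermediate index where the solution has already been regularized to a universal bound; the logarithmic choice of $\omega$ in \eqref{utau0} is manifestly tailored to make this work, which is the point to emphasize.
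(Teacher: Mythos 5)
Your final summary paragraph is essentially the paper's proof, almost verbatim: apply the discrete estimates \eqref{discreteREK}--\eqref{discreteRELp} of Lemma \ref{lemma:decay2}, observe that the remainder terms $\tau(\KK(u_\tau^0))^{\beta_0}$ and $\tau\|u_\tau^0\|_{L^p}^{p\beta_p}$ vanish as $\tau\to 0$ because $u_\tau^0=\Gamma_{\omega(\tau)}*u_0$ with the logarithmic scale $\omega(\tau)=-1/\ln\tau$ makes $\KK(u_\tau^0)$ and $\|u_\tau^0\|_{L^p}$ blow up only like powers of $\ln(1/\tau)$, and then pass to the limit using the narrow convergence $u_{\tau_n}(t)\to u(t)$, $k_n\tau_n\to t$, and the narrow lower semicontinuity of $\KK$ and of $\|\cdot\|_{L^p}$. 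The paper does precisely this in about four lines.

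The long middle portion, where you conclude that ``this naive bound is \emph{not} enough'' and then explore re-starting the comparison at a later index $j_0(\tau)$, is a false alarm caused by a computational slip: when checking exponents you wrote $\tau_n\cdot\bigl(\tau_n^{-\frac d2(p-1)}\bigr)^{\beta_p}$, i.e.\ you substituted the time step $\tau_n$ into $\|\Gamma_t\|_{L^p}\sim t^{-\frac d2(1-1/p)}$ instead of the actual regularization time $\omega(\tau_n)=-1/\ln\tau_n$. With $\tau$ in place of $\omega(\tau)$ the exponent is indeed negative and the estimate would fail; with the correct $\omega(\tau)$ the remainder is $\tau_n\cdot(\ln(1/\tau_n))^{\text{power}}\to 0$, so the direct route works and no re-indexing is needed. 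You do catch this at the end (``the logarithmic choice of $\omega$ in \eqref{utau0} is manifestly tailored to make this work''), which is exactly the point; the re-indexed recursion you sketch would additionally require a universal one-step smoothing bound on $\|u_\tau^{j_0}\|_{L^p}$ independent of $u_\tau^0$, which the paper does not prove and which is unnecessary once the $\omega(\tau)$ scaling is tracked correctly.

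Two small additional remarks. First, for the monotone bounds $\KK(u(t))\le\lim_{\tau\to0}\KK(u_\tau^0)$ and $\|u(t)\|_{L^p}\le\lim_{\tau\to0}\|u_\tau^0\|_{L^p}$ you can bypass the remainder altogether by citing Lemma \ref{lemma:decay1} directly, which gives the exact monotonicity $\GG(u_\tau^k)\le\GG(u_\tau^{k-1})$ with no error term; this is cleaner than using the $\min$ in \eqref{discreteRELp}. Second, for the lower semicontinuity of $\|\cdot\|_{L^p}$ under narrow convergence your duality argument with $\phi\in C_c$, $\|\phi\|_{L^{p'}}\le1$ is correct, but one must allow $\|u(t)\|_{L^p}=+\infty$ a priori; the estimate then shows it is in fact finite for $t>0$.
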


\begin{proof}
With the choice of $u_\tau^0$ from Section \ref{subsection:MM} we immediately have that
$$ \lim_{\tau\to 0} \tau (\KK(u_\tau^0))^{\beta_0} = 0, \qquad  \lim_{\tau\to 0} \tau^{1/p} \|u_\tau^0\|^{\beta_p}_{L^p(\R^d)} =0,$$
since the $L^p$ norms of $u_\tau^0$ diverge at most logarithmically as $\tau\to 0$.
The proof is now a consequence of Lemma \ref{lemma:decay2}, of the narrow convergence \eqref{narrowconv}
and of the lower semi continuity of $\KK$ and of the $L^p$ norms with respect to the narrow convergence.
\end{proof}

\section{Euler-Lagrange equation for the minimizers}

Thanks to Lemma \ref{lemma:ed}, we have enough regularity to obtain an Euler-Lagrange equation for discrete minimizers.
This necessary condition \eqref{euler} on the minimizers of the scheme is the first step towards
a discrete version of a weak formulation of the equation \eqref{equation}, (see \eqref{euler2}).

\begin{lemma}\label{lemma:euler}
Let $u_0\in D(\FF_s)$. Let $\{u_{\tau}^k:k=0,1,2,\ldots\}$ be the solution sequence to \eqref{minmov} given by {\rm Proposition \ref{prop:existenceMM}}
and $v_{\tau}^k:=K_s * u_{\tau}^k$.
Then, for any integer $k\geq 1$ there holds
\begin{equation}\label{euler}
	\int_{\R^d}\nabla v_{\tau}^k\cdot\eta \, u_{\tau}^k\,\d x=\frac1\tau \int_{\R^d}(T_{u_\tau^k}^{u_\tau^{k-1}} -\id)\cdot\eta \, u_{\tau}^k\,\d x,
	\qquad \forall \, \eta \in C^{\infty}_c(\mathbb{R}^d;\mathbb{R}^d),
\end{equation}
where  $T_{u_\tau^k}^{u_\tau^{k-1}}$ is the optimal transport map from $u_\tau^k$ to $u_\tau^{k-1}$ and
$\id$ is the identity map on $\mathbb{R}^d$.
Moreover, there holds
\begin{equation}\label{ab}
	\int_{\R^d}|\nabla v_\tau^k|^2u_\tau^k\,dx = \frac{1}{\tau^2}W^2(u_\tau^k,u_\tau^{k-1}), \quad k=1,2,3,...
\end{equation}

\end{lemma}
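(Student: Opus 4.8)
The plan is to derive the Euler--Lagrange equation \eqref{euler} by performing first variations of the functional $u\mapsto \FF_s(u)+\tfrac1{2\tau}W^2(u,u_\tau^{k-1})$ along perturbations of the minimizer $u_\tau^k$ induced by smooth vector fields, and then to obtain \eqref{ab} as a special limiting consequence. Fix $k\ge1$, write $u:=u_\tau^k$, $w:=u_\tau^{k-1}$, $v:=v_\tau^k=K_s*u$, and let $\eta\in C_c^\infty(\R^d;\R^d)$. For $\eps\in\R$ small, set $\Phi_\eps:=\id+\eps\eta$ (a diffeomorphism for $|\eps|$ small) and $u_\eps:=(\Phi_\eps)_\# u$. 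Since $u\in\PP_2(\R^d)$ has finite second moment and finite energy, and by Lemma \ref{lemma:ed} also $u\in\dot H^{1-s}(\R^d)\cap D(\HH)$, both terms of the functional are finite along this curve, so minimality gives
\[
\frac{d}{d\eps}\Big|_{\eps=0}\Big(\FF_s(u_\eps)+\tfrac1{2\tau}W^2(u_\eps,w)\Big)=0,
\]
provided both derivatives exist. The first step is to compute each derivative.

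For the energy term I would use the interaction representation $\FF_s(u)=\tfrac12\iint K_s(x-y)\,\d u(x)\,\d u(y)$, so that $\FF_s(u_\eps)=\tfrac12\iint K_s(\Phi_\eps(x)-\Phi_\eps(y))\,\d u(x)\,\d u(y)$; differentiating under the integral sign at $\eps=0$ yields $\iint \nabla K_s(x-y)\cdot(\eta(x)-\eta(y))\,\d u(x)\,\d u(y)=\int_{\R^d}\nabla v\cdot\eta\,u\,\d x$, using $\nabla v=\nabla K_s*u$ and the antisymmetry of $\nabla K_s$. The differentiation under the integral sign must be justified: since $\nabla K_s(z)=c\,z|z|^{-d+2s-2}$ is locally integrable but singular at the origin, I would either invoke the regularity $u\in L^q(\R^d)$ for $q=2d/(d-2(1-s))>2$ (from Lemma \ref{lemma:ed} and the Sobolev embedding \eqref{fractionalembedding}) to get that $\nabla v\in L^2$ and that the singular kernel is dominated, or alternatively work in Fourier variables, where $\FF_s(u_\eps)$ and its $\eps$-derivative become integrals of $|\xi|^{-2s}$ against (derivatives of) $\widehat{u_\eps}$, with the Sobolev regularity of $u$ guaranteeing convergence. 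For the Wasserstein term, I would use that $u$ is absolutely continuous, so there is a unique optimal plan $(\id,T_u^w)_\# u$ and $W^2(u,w)=\int|x-T_u^w(x)|^2\,u\,\d x$; by a standard result on differentiating the squared Wasserstein distance along $(\Phi_\eps)_\# u$ (see \cite{AGS}, e.g. the formula for the first variation), one gets $\frac{d}{d\eps}\big|_{\eps=0}W^2(u_\eps,w)=2\int_{\R^d}(\id-T_u^w)\cdot\eta\,u\,\d x$. Combining the two derivatives and dividing by $\tau$ gives exactly \eqref{euler}.

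The step I expect to be the main obstacle is the rigorous justification of the first variation of $\FF_s$ — both the interchange of derivative and integral against the singular Riesz kernel, and the fact that $\int\nabla v\cdot\eta\,u\,\d x$ is finite and equals the limit. This is where the improved regularity $u_\tau^k\in\dot H^{1-s}(\R^d)$ from Lemma \ref{lemma:ed} is essential: it upgrades $u$ from a mere measure in $\dot H^{-s}$ to an $L^q$ function, which makes $\nabla v=\nabla K_s*u$ well-defined in $L^2(\R^d)$ (by the mapping properties of the Riesz potential / in Fourier variables $|\xi|\,|\xi|^{-2s}\hat u\in L^2$ since $u\in\dot H^{1-s}$), and hence the linear functional $\eta\mapsto\int\nabla v\cdot\eta\,u$ is controlled, e.g., by $\|\nabla v\|_{L^2}\|u\|_{L^q}\|\eta\|_{\infty}$ type bounds together with the fact that $\eta$ has compact support. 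A careful treatment would split the double integral near and away from the diagonal, using $u\in L^q$ and the local integrability of $\nabla K_s$ on the near-diagonal piece and boundedness of $\nabla K_s$ away from it, together with a dominated convergence argument uniform in small $\eps$ (the diffeomorphisms $\Phi_\eps$ distort distances by a bounded factor).

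Finally, to obtain \eqref{ab}, I would take in \eqref{euler} a sequence of vector fields $\eta$ approximating the optimal displacement $\tfrac1\tau(T_u^w-\id)$, which belongs to $L^2(u;\R^d)$ since $\int|T_u^w-\id|^2u=W^2(u,w)<\infty$. More precisely, by density of $C_c^\infty(\R^d;\R^d)$ in $L^2(u;\R^d)$ one can pass to the limit in \eqref{euler} — the right-hand side converges to $\tfrac1{\tau^2}W^2(u,w)$, and the left-hand side converges to $\tfrac1\tau\int\nabla v\cdot(T_u^w-\id)\,u$ provided $\nabla v\in L^2(u;\R^d)$, i.e. $\int|\nabla v|^2 u<\infty$; this last fact again follows from $\nabla v\in L^2(\R^d)$ and $u\in L^\infty_{loc}$ / $u\in L^q$ combined with, say, Hölder, or more directly is exactly the content one wants to establish. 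Thus \eqref{euler} extended to $\eta=\tfrac1\tau(T_u^w-\id)$ reads $\tfrac1\tau\int\nabla v\cdot(T_u^w-\id)\,u=\tfrac1{\tau^2}W^2(u,w)$; but one also needs the reverse pairing. The cleanest route is: the extended Euler--Lagrange identity with test ``field'' $\tfrac1\tau(\id-T_u^w)$ gives $\int\nabla v\cdot\tfrac1\tau(\id-T_u^w)\,u = -\tfrac1{\tau^2}W^2(u,w)$, i.e. $\int\nabla v\cdot(T_u^w-\id)u=\tfrac1\tau W^2(u,w)$; meanwhile, recalling from \eqref{euler} that $\tfrac1\tau(T_u^w-\id)$ and $\nabla v$ represent the same element when tested against all $\eta$, i.e. $\nabla v\,u=\tfrac1\tau(T_u^w-\id)u$ as vector measures, we get $\int|\nabla v|^2u=\int\nabla v\cdot\tfrac1\tau(T_u^w-\id)u=\tfrac1{\tau^2}W^2(u,w)$, which is \eqref{ab}. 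The only delicacy is to make sure $\nabla v\in L^2(u)$ so that all pairings are legitimate; this follows from $\nabla v\in L^2(\R^d)$ together with the regularity of $u$, and is also a posteriori consistent with \eqref{ab} itself.
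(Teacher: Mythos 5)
Your overall strategy --- first variation of $\FF_s(u)+\tfrac1{2\tau}W^2(u,u_\tau^{k-1})$ along $(\id+\eps\eta)_\#u_\tau^k$, with the standard formula for the Wasserstein term, and then reading off \eqref{ab} from the a.e.\ identity $\tau\,u_\tau^k\nabla v_\tau^k=(T_{u_\tau^k}^{u_\tau^{k-1}}-\id)\,u_\tau^k$ --- is exactly the paper's. The derivation of \eqref{ab} in your last paragraph is correct and matches the paper (you don't even need the density-in-$L^2(u)$ step: the a.e.\ identity of the vector densities already does the job).

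The place where your primary route has a real gap is the first variation of $\FF_s$ via the kernel representation. You propose to justify $\tfrac{d}{d\eps}\big|_{\eps=0}\FF_s(u_\eps)=\int\nabla v\cdot\eta\,u$ by ``splitting the double integral near and away from the diagonal, using $u\in L^q$ and the local integrability of $\nabla K_s$.'' But $\nabla K_s(z)\sim |z|^{-(d-2s+1)}$ is \emph{not} locally integrable when $s\le 1/2$ (in particular, never in $d=1$), so this splitting argument fails in part of the admissible parameter range. Dominated convergence does give you that the derivative equals $\tfrac12\iint\nabla K_s(x-y)\cdot(\eta(x)-\eta(y))\,\d u\,\d u$ (the difference $\eta(x)-\eta(y)$ gains a power of $|x-y|$, giving a $K_s$-type dominant), but the subsequent Fubini-style rearrangement into $\int\nabla v\cdot\eta\,u$ --- i.e.\ splitting into $\iint\nabla K_s(x-y)\cdot\eta(x)\,\d u\,\d u$ and its mirror --- produces individually non-absolutely-convergent integrals for $s\le 1/2$, and identifying the symmetrized double integral with the (Fourier-defined) $\int\nabla v\cdot\eta\,u$ requires an extra limiting/regularization argument that your sketch does not supply. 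You mention the Fourier route ``as an alternative,'' and that is in fact what the paper carries out: it writes $(2\pi)^d(\FF_s(u_\delta)-\FF_s(u))$ using $|a|^2-|b|^2=(\bar a+\bar b)(a-b)+\bar a b-\bar b a$, factors the integrand as $|\xi|(\hat v_\delta(-\xi)+\hat v(-\xi))\cdot|\xi|^{-1}\delta^{-1}(\hat u_\delta(\xi)-\hat u(\xi))$, shows strong $L^2$ convergence of the first factor (via the uniform $H^{1-s}$ bound on $u_\delta$, compact support of $u_\delta-u$, and interpolation through some $\dot H^r$, $r\in(1-2s,1-s)$) and weak $L^2$ convergence of the second factor (via a pointwise Lagrange estimate and an $L^2$ bound coming from $(\eta u)_\delta$), and concludes by strong-vs-weak pairing. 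This is the piece that is doing the real work and that your kernel-based proposal does not rigorously replace. Everything else in your proposal is sound.
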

\begin{proof}
Let $\eta\in C^{\infty}_c(\mathbb{R}^d;\mathbb{R}^d)$. For $\delta \geq 0$ we define $\Phi_\delta:\R^d\to\R^d$
by $\Phi_\delta(x)=x+\delta\eta(x)$.
Clearly there exists $\delta_0>0$ such that
\begin{equation*}
	\frac12 \leq \det(\nabla \Phi_\delta(x)) \leq \frac32 \qquad \forall x \in \R^d , \quad \forall \delta \in [0,\delta_0]
\end{equation*}
and $\Phi_\delta$ is a global diffeomorphism.
In this proof, for simplicity, we use the following notation:
$u:=u_\tau^k$ and $u_\delta:=(\Phi_\delta)_\#u$.

By the minimum problem \eqref{minmov} we have, for $\delta>0$,
\begin{equation}\label{asnote}
0 \le \frac1\delta \left( \FF_s(u_\delta)-\FF_s(u)\right)
+ \frac1\delta \left(\frac{1}{2\tau}W^2(u_\delta ,u_\tau^{k-1})-\frac{1}{2\tau}W^2(u,u_\tau^{k-1})\right)
\end{equation}
A standard  computation entails
\begin{equation}\label{standard}
 \lim_{\delta \to 0}\frac1\delta \left(\frac{1}{2\tau}W^2(u_\delta ,u_\tau^{k-1})-\frac{1}{2\tau}W^2(u,u_\tau^{k-1})\right)
 = - \frac{1}{\tau}\int_{\R^d} (T_\tau^k-\id)\cdot \eta u\,\d x.
\end{equation}
We have to compute
\begin{equation}\label{nonstandard}
 \lim_{\delta \to 0}\frac1\delta \left( \FF_s(u_\delta)-\FF_s(u)\right).
\end{equation}
Since for $a,b\in\C$ it holds
$|a|^2-|b|^2=(\bar a+\bar b)(a-b)+\bar ab-\bar ba$
and $\bar{\hat u}(\xi)=\hat u(-\xi)$,
we obtain
$$ (2\pi)^d\left( \FF_s(u_\delta)-\FF_s(u)\right) =
\frac12\int_{\R^d}|\xi|^{-2s}\big(\hat u_\delta(-\xi)+\hat u(-\xi)\big)\big(\hat u_\delta(\xi)-\hat u(\xi)\big)\,\d\xi $$
because
$$ \int_{\R^d}|\xi|^{-2s}\hat u_\delta(-\xi)\hat u(\xi)\,\d\xi
= \int_{\R^d}|\xi|^{-2s}\hat u_\delta(\xi)\hat u(-\xi)\,\d\xi .  $$
After defining $\hat v_\delta(\xi)=|\xi|^{-2s}\hat u_\delta(\xi)$  and $\hat v(\xi)=|\xi|^{-2s}\hat u(\xi)$ we write
\begin{equation}\label{elv}
 \begin{aligned}  \frac{(2\pi)^d}{\delta}\left( \FF_s(u_\delta)-\FF_s(u)\right) &=
\frac12\int_{\R^d}\big(\hat v_\delta(-\xi)+\hat v(-\xi)\big)\frac1\delta\big(\hat u_\delta(\xi)-\hat u(\xi)\big)\,\d\xi \\
&=\frac12\int_{\R^d}|\xi|\big(\hat v_\delta(-\xi)+\hat v(-\xi)\big)|\xi|^{-1}\frac1\delta\big(\hat u_\delta(\xi)-\hat u(\xi)\big)\,\d\xi.
\end{aligned}
\end{equation}
We show that $|\xi|\hat v_\delta(-\xi)$ converges to $|\xi|\hat v(-\xi)$ strongly in $L^2(\R^d)$ as $\delta \to 0$.
First of all we observe that there exists a constant $c$ such that
$$ \|u_\delta\|_{H^{1-s}(\R^d)}\leq c, \qquad \forall \delta\in [0,\delta_0].$$
In order to obtain this bound we write
$u_\delta= \phi_\delta u\circ\Phi_\delta^{-1} + u\circ\Phi_\delta^{-1}$, where $\phi_\delta=\det\nabla\Phi_\delta^{-1}-1$. Since $\Phi_\delta^{-1}$ is a global diffeomorphism, close to the identity, and clearly there exists a constant $\tilde c>0$ such that $|\Phi_\delta(x)-\Phi_\delta(y)|\ge \tilde c|x-y|$ for any $x,y\in\mathbb{R}^d$ and any $\delta \in [0,\delta_0]$, we get $\|u\circ\Phi_\delta^{-1}\|_{H^{1-s}(\mathbb{R}^d)}\le \tilde c\|u\|_{H^{1-s}(\mathbb{R}^d)}$, see \cite[Corollary 1.60]{BCD}.
A similar estimate holds true as well if we multiply by the smooth compactly supported function $\phi_\delta$, see also of \cite[Theorem 1.62]{BCD}.
Then $$\|u_\delta-u\|_{H^{1-s}(\R^d)}\leq c+\|u\|_{H^{1-s}(\mathbb{R}^d)}, \qquad \forall \delta\in [0,\delta_0].$$
Since $\supp(u_\delta-u)=\supp\eta$ is compact we have that
$\{u_\delta-u\}_{\delta\in[0,\delta_0]}$ is strongly compact in $H^r(\R^d)$ for any $r<1-s$.
Since $u_\delta \to u$ narrowly as $\delta\to 0$ we obtain that $\|u_\delta-u\|_{H^{r}(\R^d)}\to 0$ as $\delta\to 0$.

Since $-s<1-2s<1-s$, choosing $r\in (1-2s,1-s)\cap(0,1-s)$,
by interpolation we have
$$ \|\nabla v_\delta-\nabla v\|_{L^2(\R^d)} = \|u_\delta-u\|_{\dot H^{1-2s}(\R^d)}\leq \|u_\delta-u\|^{1-\theta}_{\dot H^{-s}(\R^d)}\|u_\delta-u\|^\theta_{\dot H^{r}(\R^d)},
$$
where $1-2s=(1-\theta)(-s)+\theta r $.
Since $ \|u_\delta-u\|_{\dot H^{-s}(\R^d)}$ is uniformly bounded for $\delta \in (0,\delta_0)$ we obtain the strong convergence
in $L^2(\R^d)$ of $|\xi| \hat v_\delta(-\xi)$ to $|\xi|\hat v(-\xi)$.

For every $\xi\in\R^d$ the function $g_\xi:[0,+\infty)\to\R$ defined by $g_\xi(\delta)= \hat u_\delta(\xi)$ is of class $C^1$
and $$ g'_\xi(\delta)= -i\xi\cdot\int e^{-i\xi\cdot(x+\delta\eta(x))}\eta(x)u(x)\,\d x. $$
The continuity of the derivative follows from its expression and dominated convergence Theorem.
Indeed, by definition of image measure, i.e.,
$$ \hat u_\delta(\xi)=\int_{\R^d}e^{-i\xi\cdot(x+\delta\eta(x))}u(x)\,\d x,$$
we have
$$
\begin{aligned}
\frac1h\big(  \hat u_{\delta+h}(\xi)- \hat u_\delta(\xi)\big) &=\int_{\R^d}\frac1h(e^{-i\xi\cdot(h\eta(x))}-1)  e^{-i\xi\cdot(x+\delta\eta(x))}u(x)\,\d x
 \\
&\to -i\xi\cdot\int e^{-i\xi\cdot(x+\delta\eta(x))}\eta(x)u(x)\,\d x
\end{aligned}
$$
as $h\to 0$,
by dominated convergence Theorem.

By Lagrange Theorem for every $\xi$ and $\delta>0$ there exist $\delta_\xi\in [0,\delta)$ such that
$\frac1\delta\big(\hat u_\delta(\xi)-\hat u(\xi)\big)= g'_\xi(\delta_\xi)$.
Since $|g'_\xi(\delta_\xi)|\leq |\xi|\|\eta\|_{L^\infty}$
we obtain that
$|\xi|^{-1}\frac1\delta\big(\hat u_\delta(\xi)-\hat u(\xi)\big)$ converges to $-i |\xi|^{-1}\xi \cdot (\widehat{\eta u})(\xi)$ in the sense of distributions.
But
$$ g'_\xi(\delta)= -i\xi\cdot\widehat{(\eta u)_\delta}(\xi)$$
where $(\eta u)_\delta=(\Phi_\delta)_\#(\eta u)$,
and $\|(\eta u)_\delta \|_{L^2(\R^d)} \leq 2\|\eta u\|_{L^2(\R^d)}$,
so that $|\xi|^{-1}\frac1\delta\big(\hat u_\delta(\xi)-\hat u(\xi)\big)$ is bounded in $L^2(\R^d)$.
Consequently
$|\xi|^{-1}\frac1\delta\big(\hat u_\delta(\xi)-\hat u(\xi)\big)$ converges to $-i |\xi|^{-1}\xi \cdot (\widehat{\eta u})(\xi)$ weakly in $L^2(\R^d)$ as well.

Eventually, we may pass to the limit in \eqref{elv} by strong vs weak convergence, and using Plancherel Theorem we obtain
\begin{equation}\label{ending}
\begin{aligned}
	(2\pi)^d \lim_{\delta \to 0}\frac1\delta \left( \FF_s(u_\delta)-\FF_s(u)\right) &= -i\int_{\R^d} \hat v(-\xi)\xi\cdot (\widehat{\eta u})(\xi)\, \d\xi \\
	& = -i\sum_{j=1}^d \int_{\R^d} \hat v(-\xi)\xi_j\cdot (\widehat{\eta_j u})(\xi)\, \d\xi	\\
	&= (2\pi)^d \sum_{j=1}^d \int_{\R^d} \de_{x_j} v(x)\eta_j(x) u(x)\, \d x	\\
	&=(2\pi)^d \int_{\R^d} \nabla v(x)\cdot \eta(x) u(x)\, \d x.	
\end{aligned}
\end{equation}
In conclusion, by combining \eqref{asnote}, \eqref{standard}, \eqref{nonstandard}, \eqref{elv} and \eqref{ending}, we get
\begin{equation*}
0 \le \int_{\R^d} \nabla v\cdot \eta u\d x - \frac{1}{\tau}\int_{\R^d} (T_\tau^k-\id)\cdot \eta u\d x.
\end{equation*}
The above inequality is valid also for $-\eta$ instead of $\eta$, so that it is indeed an equality and \eqref{euler} holds. From \eqref{euler}, it follows that
$\tau u_\tau^k\nabla v_\tau^k=(T_{u_\tau^k}^{u_\tau^{k-1}}-I)u_\tau^k$ holds a.e. in $\mathbb{R}^d$. 
Since $W^2(u_\tau^k,u_\tau^{k-1})=\int_{\mathbb{R}^d}|T_{u_\tau^k}^{u_\tau^{k-1}}-I|^2\,u_\tau^k\,\d x$,
\eqref{ab} follows as well.
\end{proof}


\section{Convergence and energy dissipation}

In this Section we prove that the limit curve obtained by means of Theorem \ref{th:convergence1} is indeed a 
gradient flow solution to problem \eqref{equation}:
it satisfies \eqref{equation} in the sense of distributions and a corresponding energy dissipation inequality holds.

\subsection{Convergence}

\begin{lemma}\label{lemma:convergence2}
Let $u_0\in \dot H^{-s}({\R}^d)\cap\PP_2({\R}^d)$, $u_\tau$ the piecewise constant curve defined in \eqref{PC} and
$v_\tau(t):=K_{s}\ast u_\tau(t)$ defined for $t\geq 0$.
Given a vanishing sequence $\tau_n$, let
$u_{\tau_{n}}$ be a narrowly convergent subsequence (not relabeled) given by {\rm Theorem \ref{th:convergence1}}, 
$u$ its limit curve and  $v(t):=K_{s}\ast u(t)$ for $t\geq 0$.

Then, for any $T_0>0$ and $T>T_0$ we have
$u\in L^2((T_0,T); H^{1-s}(\R^d))$ and $\nabla v \in L^2((T_0,T);L^2({\R}^d))$.
Moreover the following convergences hold:
\[
 \phi u_{\tau_{n}}\to \phi u \quad \mbox{strongly in } L^2((T_0,T);H^r({\R}^d)) \quad \mbox{ as $n\to\infty$, } \forall \phi \in \SS(\R^d), \forall r< 1-s,
\]
\[
 u_{\tau_{n}}\to u \quad \mbox{strongly in } L^2((T_0,T);L^2_{loc}({\R}^d)) \quad \mbox{ as $n\to\infty$,}
\]
\begin{equation}\label{convergenceVF}
 \nabla v_{\tau_{n}}\to \nabla v \quad \mbox{weakly in } L^2((T_0,T);L^2({\R}^d)) \quad \mbox{ as $n\to\infty$.}
\end{equation}
If, in addition, $u_0\in D(\HH)$, then the above results also hold for $T_0=0$.
\end{lemma}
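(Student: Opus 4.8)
The plan is to first prove that, for $\tau$ small, $\{u_\tau\}$ is bounded in $L^2((T_0,T);H^{1-s}(\R^d))$ and $\{\nabla v_\tau\}$ in $L^2((T_0,T);L^2(\R^d))$. By Corollary \ref{furthercorollary}, $\int_{T_0}^T\|u_\tau(t)\|_{\dot H^{1-s}(\R^d)}^2\,\d t$ is controlled by $\HH(u_\tau^{N_0(\tau)})$ plus a $\tau$-independent constant. Since $\HH\le\KK$ pointwise on probability densities and $N_0(\tau)\tau\to T_0>0$, the discrete decay estimate \eqref{discreteREK} of Lemma \ref{lemma:decay2} bounds $\HH(u_\tau^{N_0(\tau)})$ uniformly for $\tau$ small (its remainder term is infinitesimal because $\KK(u_\tau^0)$ grows at most doubly logarithmically in $1/\tau$). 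Next, $\|u_\tau(t)\|_{L^1(\R^d)}=1$, so the fractional Sobolev/Gagliardo--Nirenberg inequalities (\eqref{fractionalembedding} if $d\ge2$, \eqref{eq:Bs} if $d=1$) interpolated with the $L^1$ bound give $\|u_\tau(t)\|_{L^2(\R^d)}\le c\,\|u_\tau(t)\|_{\dot H^{1-s}(\R^d)}^\alpha$ with $\alpha<1$; a Hölder inequality in time then yields the bound on $\int_{T_0}^T\|u_\tau(t)\|_{L^2(\R^d)}^2\,\d t$, hence on $\int_{T_0}^T\|u_\tau(t)\|_{H^{1-s}(\R^d)}^2\,\d t$. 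Finally, by Plancherel $\|\nabla v_\tau(t)\|_{L^2(\R^d)}=\|u_\tau(t)\|_{\dot H^{1-2s}(\R^d)}$, and interpolating $\dot H^{1-2s}$ between $\dot H^{-s}$ and $\dot H^{1-s}$ (using $\FF_s(u_\tau(t))\le\FF_s(u_0)$ from \eqref{basicestimate}) together with Hölder in time gives the bound on $\int_{T_0}^T\|\nabla v_\tau(t)\|_{L^2(\R^d)}^2\,\d t$. If $u_0\in D(\HH)$, then $\HH(u_\tau^0)\le\HH(u_0)<+\infty$ (convolution with a probability kernel decreases $\HH$) and by Lemma \ref{lemma:ed} $\HH(u_\tau^{N_0})\le\HH(u_0)$ for every $N_0\ge0$, so the whole argument works with $T_0=0$.

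\textbf{Step 2: identification of the weak limits.} Being bounded in the Hilbert spaces $L^2((T_0,T);H^{1-s}(\R^d))$ and $L^2((T_0,T);L^2(\R^d))$, along a subsequence $u_{\tau_n}\rightharpoonup\tilde u$ and $\nabla v_{\tau_n}\rightharpoonup\chi$ weakly. Testing against $\psi(t)\phi(x)$ with $\psi\in C^\infty_c((T_0,T))$, $\phi\in\SS(\R^d)$, and using the pointwise narrow convergence $u_{\tau_n}(t)\to u(t)$ of Theorem \ref{th:convergence1} with dominated convergence (the pairings are bounded by $\|\phi\|_\infty$), one obtains $\tilde u=u$, whence $u\in L^2((T_0,T);H^{1-s}(\R^d))$. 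Similarly, writing $\int_{T_0}^T\!\!\int_{\R^d}\nabla v_{\tau_n}\cdot\Psi=-\int_{T_0}^T\!\!\int_{\R^d}u_{\tau_n}\,(K_s\ast\mathrm{div}\,\Psi)$ for $\Psi\in C^\infty_c$ and passing to the limit via $\hat u_{\tau_n}(t,\cdot)\to\hat u(t,\cdot)$ pointwise with $|\hat u_{\tau_n}|\le1$ (dominated convergence first in $\xi$, then in $t$), one identifies $\chi=\nabla v$ with $v=K_s\ast u$. Since these limits do not depend on the subsequence, $\nabla v\in L^2((T_0,T);L^2(\R^d))$ and \eqref{convergenceVF} holds.

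\textbf{Step 3: strong convergence via Aubin--Lions.} Fix $\phi\in\SS(\R^d)$ and set $w_\tau:=\phi u_\tau$. By the multiplication property in Proposition \ref{prop:Sobolev}, $\{w_\tau\}$ is bounded in $L^2((T_0,T);H^{1-s}(\R^d))$. For time-regularity I would use that $W$ controls a negative Sobolev norm: pairing with an optimal plan, $|\langle\mu-\nu,\eta\rangle|\le\|\nabla\eta\|_\infty\,W(\mu,\nu)$, and choosing $\sigma>d/2+1$ so that $H^\sigma(\R^d)\hookrightarrow C^1_b(\R^d)$, one gets $\|\mu-\nu\|_{H^{-\sigma}(\R^d)}\le c\,W(\mu,\nu)$ for probability measures. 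Combined with the bound $W(u_\tau(t_1),u_\tau(t_2))\le\int_{k_1(\tau)\tau}^{k_2(\tau)\tau}m_\tau(r)\,\d r$ used in the proof of Theorem \ref{th:convergence1}, with $\|m_\tau\|_{L^2(0,+\infty)}^2\le2\FF_s(u_0)$, and the multiplication property (valid for every real index), this yields
\[
\int_{T_0}^{T-h}\|w_\tau(t+h)-w_\tau(t)\|_{H^{-\sigma}(\R^d)}\,\d t\le c\,(T-T_0)\,(h+\tau)^{1/2},
\]
so that $\limsup_{\tau\to0}$ of the left-hand side vanishes as $h\downarrow0$. Since $\{\eta\,g:\|g\|_{H^{1-s}(\R^d)}\le M\}$ is relatively compact in $H^r(\R^d)$ for every $r<1-s$ (Proposition \ref{prop:Sobolev}), the Aubin--Lions--Simon compactness lemma in integral form then applies — using in addition the uniform second moment bound \eqref{boundA} on $[0,T]$ to control the $L^2$ mass of $u_\tau(t)$ at infinity, which lets one localize with compactly supported cutoffs, invoke genuine Rellich compactness, and finally let the cutoff invade $\R^d$. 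One concludes that $\{w_{\tau_n}\}$ is relatively compact in $L^2((T_0,T);H^r(\R^d))$; every limit point equals $\phi u$ by Step 2 and the narrow convergence, hence $\phi u_{\tau_n}\to\phi u$ strongly in $L^2((T_0,T);H^r(\R^d))$ for all $\phi\in\SS(\R^d)$ and $r<1-s$. Taking $\phi\in C^\infty_c(\R^d)$ equal to $1$ on a ball and $r=0$ gives $u_{\tau_n}\to u$ in $L^2((T_0,T);L^2_{loc}(\R^d))$.

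\textbf{Main obstacle.} The delicate point is Step 3: only an $L^2$-in-time control of $\|u_\tau(t)\|_{\dot H^{1-s}}$ is available (not a pointwise one), so compactness cannot be obtained time-slice by time-slice and a genuine Aubin--Lions argument is needed; moreover $\dot H^{1-s}\hookrightarrow H^r$ is not compact on all of $\R^d$, so the local compactness of Proposition \ref{prop:Sobolev} must be combined with the tightness furnished by the uniform second-moment bound. The two enabling ingredients are the estimate relating $W$ to a negative Sobolev norm and the uniform bound on $\HH(u_\tau^{N_0(\tau)})$, the latter resting on the discrete decay estimate of Lemma \ref{lemma:decay2}.
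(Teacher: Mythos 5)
Your Steps~1 and~2 match the paper's in substance: the uniform $L^2((T_0,T);H^{1-s})$ bound from Corollary~\ref{furthercorollary} combined with the decay estimate \eqref{discreteREK}, the $L^2$-in-time bound on $\nabla v_\tau$ via interpolation of $\dot H^{1-2s}$ between $\dot H^{-s}$ and $\dot H^{1-s}$, the lower-semicontinuity conclusion, and the Plancherel/dominated-convergence identification of the weak limit of $\nabla v_{\tau_n}$ as $\nabla v$. Your minor variant in Step~1 (interpolating $L^2$ between $L^1$ and $L^q$ via the fractional Sobolev inequality rather than between $\dot H^{-s}$ and $\dot H^{1-s}$) is correct and interchangeable.

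Step~3 is where you genuinely diverge. You propose Aubin--Lions--Simon: a time-equicontinuity estimate in $H^{-\sigma}(\R^d)$ with $\sigma>d/2+1$, built from the inequality $\|\mu-\nu\|_{H^{-\sigma}}\le c\,W(\mu,\nu)$ and the $L^2$ control of the discrete metric derivative $m_\tau$, combined with the compact embedding given by Proposition~\ref{prop:Sobolev}. This can be made to work, but it is more machinery than the paper needs, and the extra work (translation estimates, interpolating them from $H^{-\sigma}$ up to $H^r$, Simon's integral criterion) is avoidable. The paper's route is shorter: from \eqref{basicestimate} one has the \emph{pointwise-in-$t$} uniform bound $\|u_\tau(t)\|_{H^{-s}}\le\|u_\tau(t)\|_{\dot H^{-s}}\le(2\FF_s(u_0))^{1/2}$, so Proposition~\ref{prop:Sobolev} gives relative compactness of $\{\phi u_\tau(t)\}_\tau$ in $H^{-s-\eps}(\R^d)$ for each fixed $t$; the narrow convergence $u_{\tau_n}(t)\to u(t)$ identifies the unique limit, hence $\phi u_{\tau_n}(t)\to\phi u(t)$ strongly in $H^{-s-\eps}$ for every $t>0$. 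A dominated-convergence argument in $t$ (the $H^{-s-\eps}$ distances are uniformly bounded by $8C\FF_s(u_0)$) then upgrades this to $L^2((T_0,T);H^{-s-\eps})$ convergence, and a final interpolation against the uniformly bounded $L^2((T_0,T);H^{1-s})$ norms gives convergence in $L^2((T_0,T);H^r)$ for $r<1-s$.

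This also shows that your \textquotedblleft main obstacle\textquotedblright{} is misdiagnosed: you assert that \textquotedblleft compactness cannot be obtained time-slice by time-slice\textquotedblright{} because only an $L^2$-in-time control of $\|u_\tau(t)\|_{\dot H^{1-s}}$ is available. In fact one never needs time-slice compactness at the $H^{1-s}$ level; the uniform $\dot H^{-s}$ bound \emph{does} give time-slice compactness in $H^{-s-\eps}$, and the $L^2$-in-time $\dot H^{1-s}$ bound only enters at the end through interpolation. Recognizing that narrow convergence for each $t$, already established in Theorem~\ref{th:convergence1}, is the crucial enabling input would have led you to the simpler proof and away from Aubin--Lions altogether.
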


\begin{proof}
Let $T_0>0$.  By the definition of $u_\tau^0$ we have that the error in \eqref{discreteREK} vanishes as $\tau\to 0$, 
i.e., $\lim_{\tau\to 0}\tau (\KK(u_\tau^0))^{\beta_0}=0$.
As in Corollary \ref{furthercorollary} we let $N_0(\tau)=[T_0/\tau]$. By \eqref{discreteREK} 
 and the inequality $\HH(u) \leq \KK(u)$ we obtain that
\begin{equation}\label{boundentropy}
	\limsup_{\tau\to 0} \HH(u_\tau^{N_0(\tau)})\leq C_0T_0^{-\gamma_0},
\end{equation}
where the value of the constants $C_0$ and $\gamma_0$ is stated in Lemma \ref{lemma:decay2}.
Since by interpolation, for $\theta=s$, it holds
$$ \|u_\tau(t) \|_{L^{2}(\R^d)}\leq \|u_\tau(t) \|^{1-s}_{\dot H^{-s}(\R^d)} \|u_\tau(t) \|^s_{\dot H^{1-s}(\R^d)}, $$
then by H\"older's inequality, \eqref{basicestimate} and  \eqref{RI1} we obtain
\begin{equation}\label{L2estimate}
\begin{aligned}
& \int_{T_0}^T \|u_\tau(t) \|^2_{L^{2}(\R^d)}\,\d t 
\leq \Big( \int_{T_0}^T \|u_\tau(t) \|^2_{\dot H^{-s}(\R^d)}\,\d t  \Big)^{1-s} \Big( \int_{T_0}^T \|u_\tau(t) \|^2_{\dot H^{1-s}(\R^d)}\,\d t  \Big)^{s} \\
&\qquad \leq \Big( 2\FF_s(u_0) (T-T_0)  \Big)^{1-s} \Big( \int_{T_0}^T \|u_\tau(t) \|^2_{\dot H^{1-s}(\R^d)}\,\d t  \Big)^{s} \\
&\qquad\leq \Big( 2\FF_s(u_0) (T-T_0)  \Big)^{1-s} \Big(    \HH(u_\tau^{N_0(\tau)}) + c\Big(1 +T\FF_s(u_0)+\int_{\R^d}|x|^2\,\d u_0(x) \Big)\Big)^{s}.
\end{aligned}
\end{equation}
From  \eqref{RI1}  and the last estimate, by lower semicontinuity we obtain that $u\in L^2((T_0,T); H^{1-s}(\R^d))$.

Taking into account that $-s<1-2s<1-s$, by interpolation we obtain, for  $\theta=1-s$,
$$ \|u_\tau(t) \|_{\dot H^{1-2s}(\R^d)}\leq \|u_\tau(t) \|^{s}_{\dot H^{-s}(\R^d)} \|u_\tau(t) \|^{1-s}_{\dot H^{1-s}(\R^d)}, $$
then by Holder's inequality, \eqref{basicestimate} and  \eqref{RI1} we obtain as above
\begin{equation}\label{vestimate}
\begin{aligned}
& \int_{T_0}^T \|u_\tau(t) \|^2_{\dot H^{1-2s}(\R^d)}\,\d t
\leq \Big( \int_{T_0}^T \|u_\tau(t) \|^2_{\dot H^{-s}(\R^d)}\,\d t  \Big)^{s} \Big( \int_{T_0}^T \|u_\tau(t) \|^2_{\dot H^{1-s}(\R^d)}\,\d t  \Big)^{1-s} \\
&\qquad \leq \Big( 2\FF_s(u_0) (T-T_0)  \Big)^{s} \Big( \HH(u_\tau^{N_0(\tau)}) + c\Big(1 +T\FF_s(u_0)+\int_{\R^d}|x|^2\,\d u_0(x) \Big)\Big)^{1-s}.
\end{aligned}
\end{equation}
Since  $\widehat{v_\tau(t)}(\xi)=|\xi|^{-2s}\widehat{u_\tau(t)}(\xi)$, by Plancherel Theorem we have
$ \|\nabla v_\tau(t)\|_{L^{2}(\R^d)} =  \|u_\tau(t) \|_{\dot H^{1-2s}(\R^d)}$.
From the previous estimate it follows that
$\{\nabla v_\tau\}_{\tau>0}$ is weakly compact in $L^2((T_0,T);L^2({\R}^d))$.
Moreover $\nabla v_{\tau_k}$ converges to  $\nabla v$ in the sense of distributions in $\R^d\times(T_0,T)$.
Indeed for $\varphi\in C^\infty_c(\R^d\times(T_0,T);\R^d)$, denoting by $\varphi_t$ the function $x\mapsto \varphi(x,t)$, by Plancherel's Theorem 
we have 
$$
(2\pi)^d\int_{T_0}^T\int_{\R^d} \nabla v_{\tau_k} \cdot \varphi \,\d x\, \d t 
= -i\int_{T_0}^T\int_{\R^d} |\xi|^{-2s} \widehat{u_{\tau_k}(t)}(-\xi) \xi \cdot \widehat\varphi_t(\xi) \,\d\xi\, \d t .
$$
Since  $| |\xi|^{-2s} \widehat{u_{\tau_k}(t)}(-\xi) \xi \cdot \widehat\varphi_t(\xi)| 
\leq  |\xi|^{1-2s} | \widehat\varphi_t(\xi)| $ and $\widehat\varphi_t \in \SS(\R^d)$ for every $t\in(T_0,T)$, by \eqref{narrowconv} and
Lebesgue dominated convergence the right hand side of the above formula converges to
$$
 -i\int_{T_0}^T\int_{\R^d} |\xi|^{-2s} \widehat{u(t)}(-\xi) \xi \cdot \widehat\varphi_t(\xi) \,\d\xi\, \d t
= (2\pi)^d\int_{T_0}^T\int_{\R^d} \nabla v \cdot \varphi \,\d x\, \d t .
$$
For the stated compactness in  $L^2((T_0,T);L^2({\R}^d))$ we obtain \eqref{convergenceVF}.

Let $\phi\in\SS(\R^d)$, $r\in [0,1-s)$ and $\eps>0$.
Since $ \|u_\tau(t)\|^2_{H^{-s}(\R^d)} \leq  \|u_\tau(t)\|^2_{\dot H^{-s}(\R^d)}\leq 2\FF_s(u_0)$,
then $\{\phi u_\tau(t)\}_{\tau>0}$ is compact in $H^{-s-\eps}(\R^d)$ for any $t$.
Thus, for any $t$ we can select a subsequence $\tau_{{n}_{k(t)}}$ of $\tau_n$ such that 
$\phi u_{\tau_{{n}_{k(t)}}}(t) \to { w_t}$ strongly for some $w_t\in H^{-s-\eps}(\R^d)$.  
Actually, the subsequence is shown not to depend on $t$ thanks to \eqref{narrowconv} and the uniqueness
of the limit. As a result, we have that $\phi u_{\tau_{n}}(t) \to \phi u(t)$ in $H^{-s-\eps}(\R^d)$ for any $t>0$ and 
for any $\phi\in \SS(\R^d)$.
By Proposition \ref{prop:Sobolev} there exists a constant $C$ such that
$$
\begin{aligned}
 \|\phi u_\tau(t) -\phi u(t)\|^2_{H^{-s-\eps}(\R^d)} &\leq C\|u_\tau(t)-u(t)\|^2_{H^{-s-\eps}(\R^d)} \\
 & \leq C\|u_\tau(t)-u(t)\|^2_{H^{-s}(\R^d)} \\
 &\leq 2C \|u_\tau(t)\|^2_{H^{-s}(\R^d)} +2C\|u(t)\|^2_{H^{-s}(\R^d)} \leq 8C \FF_s(u_0).
\end{aligned}
 $$
Then by dominated convergence we have that
$\int_{T_0}^T \|\phi u_{\tau_n}(t) -\phi u(t)\|^2_{H^{-s-\eps}(\R^d)}\,\d t \to 0 $ as $n\to+\infty$.
For $\theta= (r+s+\eps)/(1+\eps)$, by interpolation we have
$$
\begin{aligned}
 & \int_{T_0}^T \|\phi u_\tau(t) -\phi u(t)\|^2_{H^{r}(\R^d)}\,\d t  \\
& \qquad\le\Big(\int_{T_0}^T \|\phi u_\tau(t) -\phi u(t)\|^2_{H^{-s-\eps}(\R^d)}\,\d t\Big)^{1-\theta} \Big(\int_{T_0}^T \|\phi u_\tau(t) -\phi u(t)\|^2_{H^{1-s}(\R^d)}\,\d t\Big)^\theta.
\end{aligned}
$$
Since by Proposition \ref{prop:Sobolev} there holds
\[\begin{aligned}\int_{T_0}^T \|\phi u_\tau(t) -\phi u(t)\|^2_{H^{1-s}(\R^d)}\,\d t &\leq C \int_{T_0}^T \|u_\tau(t) -u(t)\|^2_{H^{1-s}(\R^d)}\,\d t \\&\leq 
4C \Big( \HH(u_\tau^{N_0(\tau)}) + c\Big(1 +T\FF_s(u_0)+\int_{\R^d}|x|^2\,\d u_0(x) \Big)  \end{aligned}\]
the first convergence result follows.

In order to show the second convergence result
let $K\subset \R^d$ be a compact and we choose $\phi:\R^d\to\R$ such that $\phi\in C_c^\infty(\R^d)$,
$0\leq \phi \leq 1$, $\phi=1$ on $K$ and $r=0$.
Since
$ \|u_\tau(t) -u(t)\|^2_{L^{2}(K)}\leq \|\phi u_\tau(t) -\phi u(t)\|^2_{L^{2}(\R^d)}$ we conclude.

If $T_0=0$ then $N_0(\tau)=0$, and the last assertion follows from the previous estimates taking into account that $\HH(u_\tau^0)\leq\HH(u_0)$.
\end{proof}

\begin{theorem}
\label{th:weak_form}
If $u\in AC^2([0,+\infty);(\PP_2({\R}^d),W))$ is a limit curve given by {\rm Theorem \ref{th:convergence1}}, 
and $v(t):=K_{s}\ast u(t)$ for $t\geq 0$, then $u$ satisfies the equation in \eqref{equation} in the following weak form
\[
\int_0^{+\infty}\int_{\R^d} ( \partial_t\varphi - \nabla\varphi\cdot \nabla v )u\, \dd x \, \dd t=0, \quad
\text{for all }\varphi\in C^\infty_c((0,+\infty)\times\R^d).
\]
\end{theorem}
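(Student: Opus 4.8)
The plan is to pass to the limit, along the subsequence $\tau_n$ provided by Theorem~\ref{th:convergence1} and Lemma~\ref{lemma:convergence2}, in the discrete Euler--Lagrange identity \eqref{euler} of Lemma~\ref{lemma:euler}. Fix $\varphi\in C^\infty_c((0,+\infty)\times\R^d)$, write $\varphi_t:=\varphi(\cdot,t)$, and choose $0<T_0<T$ with $\supp\varphi\subset(T_0,T)\times\R^d$, so that only finitely many steps occur. For every $k\ge1$ I would insert $\eta=\nabla\varphi_{k\tau}\in C^\infty_c(\R^d;\R^d)$ into \eqref{euler}, multiply by $\tau$ and sum over $k$. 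Setting $t^\tau:=\tau\lceil t/\tau\rceil$ and recalling $u_\tau(t)=u_\tau^k$, $v_\tau(t)=v_\tau^k$ for $t\in((k-1)\tau,k\tau]$, the summed identity reads
\[
\int_0^{+\infty}\!\!\int_{\R^d}\nabla v_\tau(t)\cdot\nabla\varphi_{t^\tau}\,u_\tau(t)\,\d x\,\d t=\sum_{k\ge1}\int_{\R^d}\big(T_{u_\tau^k}^{u_\tau^{k-1}}-\id\big)\cdot\nabla\varphi_{k\tau}\,u_\tau^k\,\d x .
\]

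On the right-hand side I would argue as in the Jordan--Kinderlehrer--Otto scheme. Writing $\gamma_k$ for the optimal plan between $u_\tau^k$ and $u_\tau^{k-1}$ and Taylor expanding $\varphi_{k\tau}$ to second order along the optimal transport rays,
\[
\int_{\R^d}\varphi_{k\tau}\,\d u_\tau^{k-1}-\int_{\R^d}\varphi_{k\tau}\,\d u_\tau^k=\int_{\R^d}\big(T_{u_\tau^k}^{u_\tau^{k-1}}-\id\big)\cdot\nabla\varphi_{k\tau}\,u_\tau^k\,\d x+E_k,
\]
with $|E_k|\le\tfrac12\sup|\nabla^2\varphi|\,W^2(u_\tau^k,u_\tau^{k-1})$; hence $\sum_k|E_k|\le\tau\sup|\nabla^2\varphi|\,\FF_s(u_0)\to0$ by the basic estimate \eqref{basicestimate}. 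A discrete summation by parts, using that $\varphi_\tau\equiv0$ for $\tau$ small, turns the leading sum into
\[
\sum_{k\ge1}\tau\int_{\R^d}\frac{\varphi_{(k+1)\tau}-\varphi_{k\tau}}{\tau}\,\d u_\tau^k=\int_0^{+\infty}\!\!\int_{\R^d}\frac{\varphi(x,t^\tau+\tau)-\varphi(x,t^\tau)}{\tau}\,u_\tau(t)(x)\,\d x\,\d t .
\]
The difference quotient converges to $\partial_t\varphi$ uniformly and $t^\tau\to t$, so the pointwise narrow convergence $u_{\tau_n}(t)\to u(t)$ and dominated convergence in $t$ (every integrand is supported in a fixed compact subset of $(0,+\infty)\times\R^d$ and uniformly bounded) give that the right-hand side of the summed identity tends to $\int_0^{+\infty}\int_{\R^d}\partial_t\varphi\,u\,\d x\,\d t$.

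On the left-hand side I would use a weak--strong pairing. By Lemma~\ref{lemma:convergence2}, $\nabla v_{\tau_n}\rightharpoonup\nabla v$ weakly in $L^2((T_0,T);L^2(\R^d))$, while $u_{\tau_n}\to u$ strongly in $L^2((T_0,T);L^2_{loc}(\R^d))$; since $\varphi$ is spatially compactly supported, $\nabla\varphi_{t^\tau}$ is bounded and converges to $\nabla\varphi_t$ uniformly in $(x,t)$, and therefore $\nabla\varphi_{t^{\tau_n}}\,u_{\tau_n}\to\nabla\varphi_t\,u$ strongly in $L^2((T_0,T);L^2(\R^d))$. Pairing this with the weak limit $\nabla v$ shows that the left-hand side of the summed identity tends to $\int_0^{+\infty}\int_{\R^d}\nabla v\cdot\nabla\varphi\,u\,\d x\,\d t$; equating the two limits yields the asserted weak formulation. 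I expect the main obstacle to be precisely this weak--strong pairing: it rests entirely on the strong $L^2_{loc}$ compactness of $\{u_\tau\}$ from Lemma~\ref{lemma:convergence2}, itself a consequence of the regularity and entropy estimates built up via flow interchange; by contrast the control of the time-discretization remainders $E_k$ and of the shift $t\mapsto t^\tau$, together with the summation-by-parts bookkeeping, is comparatively routine.
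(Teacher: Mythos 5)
Your argument is essentially the paper's: insert the test field $\eta=\nabla\varphi$ into the Euler--Lagrange identity \eqref{euler}, pass to the limit on the left via the strong $L^2_{loc}$ compactness of $u_{\tau_n}$ and the weak $L^2$ convergence of $\nabla v_{\tau_n}$ from Lemma~\ref{lemma:convergence2}, and on the right via the standard JKO/minimizing-movement argument. Where the paper dispatches the right-hand side by citing \cite[Theorem 11.1.6]{AGS} as ``a standard argument,'' you have unpacked that argument in full (second-order Taylor expansion along the transport rays with $|E_k|\le\tfrac12\sup|\nabla^2\varphi|\,W^2(u_\tau^k,u_\tau^{k-1})$, Abel summation killing the boundary term since $\varphi_\tau\equiv0$ for small $\tau$, and uniform convergence of the time-difference quotients combined with pointwise narrow convergence and dominated convergence in $t$); this is correct and complete, and is a useful expansion of the paper's citation.
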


\begin{proof}
We fix $\varphi\in C^\infty_c((0,+\infty)\times\R^d)$.
By \eqref{euler} with the choice of $\eta=\nabla_x \varphi$ (depending on time) and integrating we obtain  
\begin{equation}\label{euler2}
	\int_0^{+\infty}\int_{\R^d}\nabla v_{\tau}\cdot \nabla\varphi \,u_{\tau}\,\d x\,\d t
	=\frac1\tau \int_0^{+\infty} \int_{\R^d}({T}_{\tau}-\id)\cdot \nabla\varphi \,u_{\tau}\,\d x\,\d t,
\end{equation}
where $T_\tau$ is defined as $T_\tau(t)=T_{u_\tau^k}^{u_\tau^{k-1}}$ if $t\in((k-1)\tau,k\tau]$.
By Lemma \ref{lemma:convergence2} along a suitable sequence $\tau_n$ the left hand side of \eqref{euler2} converges to
\[
\int_0^{+\infty}\int_{\R^d} \nabla\varphi\cdot \nabla v \,u\, \dd x \, \dd t
\]
By a standard argument,  the right hand side of \eqref{euler2} converges to
\[
\int_0^{+\infty}\int_{\R^d} \partial_t\varphi \, u\, \dd x \, \dd t,
\]
see for instance \cite[Theorem 11.1.6]{AGS}.
\end{proof}

\subsection{De Giorgi interpolant and Discrete energy dissipation}

In order to obtain an energy dissipation estimate we introduce the so called De Giorgi variational interpolant
(see for instance \cite[Section 3.2]{AGS}) as follows: $\tilde u_\tau(0):=u_\tau^0$ and
\begin{equation*}\begin{aligned}
    &\tilde u_\tau(t) \in  {\rm Argmin}_{u\in \PP_2(\R^d)}\left\{\frac{1}{2(t-(k-1)\tau)}W^2(u,u_\tau^{k-1}) +\FF_s(u)\right\}\\
     &\text{for } t\in ((k-1)\tau,k\tau],\quad   k=1,2,\ldots
\end{aligned}\end{equation*}
 We observe that by the argument in the proof of Proposition \ref{prop:existenceMM} 
this interpolant is uniquely defined and $\tilde u_\tau(k\tau)=u_\tau^k$ for any $k\in\NN$.
\begin{proposition}
For every $t>0$, $\tilde u_\tau(t)\in H^{1-s}(\R^d)$ and, denoting by
$\tilde v_{\tau}(t):=K_s\ast \tilde u_{\tau}(t)$,
the following discrete energy identity holds for all $N\in\NN$ and $\tau>0$
\begin{equation}\label{discreteEI}
\begin{aligned}
 \frac{1}{2}\int_0^{N\tau}\int_{\R^d}\left|\nabla v_\tau \right|^2u_\tau\,\dd x\,\dd t
+ \frac{1}{2}\int_0^{N\tau}\int_{\R^d}\left|\nabla\tilde v_\tau \right|^2\tilde u_\tau\,\dd x\,\dd t
 +\FF_s(u_\tau({N\tau})) = \FF_s(u_\tau^0).
\end{aligned}
\end{equation}
Moreover 
\begin{equation}\label{DGversusPC}
    W^2(\tilde u_\tau(t),u_\tau(t))\leq 8 \tau \FF_s(u_0), \qquad \forall t\in[0,+\infty).
\end{equation}
\end{proposition}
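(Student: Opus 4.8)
The argument splits into three parts, which I would take up in order.

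\emph{Regularity of the interpolant.} Fix $t\in((k-1)\tau,k\tau]$ and set $\sigma:=t-(k-1)\tau\in(0,\tau]$. By definition $\tilde u_\tau(t)$ minimises $u\mapsto\FF_s(u)+\tfrac1{2\sigma}W^2(u,u_\tau^{k-1})$, which is a one-step problem of exactly the type \eqref{minmov} with time step $\sigma$ and reference measure $u_\tau^{k-1}$; note that $u_\tau^{k-1}\in D(\HH)$ by Lemma \ref{lemma:ed} (for $k=1$, by Remark \ref{domain}) and that $\FF_s(\tilde u_\tau(t))\le\FF_s(u_\tau^{k-1})<+\infty$, so $\tilde u_\tau(t)\in D(\FF_s)$. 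Hence the heat-flow interchange argument of Lemma \ref{lemma:ed} applies verbatim with $\tau$ replaced by $\sigma$, giving $\tilde u_\tau(t)\in\dot H^{1-s}(\R^d)\cap D(\HH)$ with $\|\tilde u_\tau(t)\|^2_{\dot H^{1-s}(\R^d)}\le\sigma^{-1}\big(\HH(u_\tau^{k-1})-\HH(\tilde u_\tau(t))\big)$. Interpolating with $\theta=s$ between $\dot H^{-s}$ and $\dot H^{1-s}$ (using that $\tilde u_\tau(t)\in D(\FF_s)$) yields $\tilde u_\tau(t)\in L^2(\R^d)$, hence $\tilde u_\tau(t)\in H^{1-s}(\R^d)$. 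With this regularity the proof of Lemma \ref{lemma:euler} applies to $\tilde u_\tau(t)$, as a minimiser of the one-step problem with step $\sigma$ and reference $u_\tau^{k-1}$, producing the analogue of \eqref{ab}:
\begin{equation}\label{abDG}
\int_{\R^d}|\nabla\tilde v_\tau(t)|^2\,\tilde u_\tau(t)\,\d x=\frac{W^2(\tilde u_\tau(t),u_\tau^{k-1})}{(t-(k-1)\tau)^2}.
\end{equation}

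\emph{The energy identity.} Fix $k\ge1$ and for $\sigma\in(0,\tau]$ set $m_k(\sigma):=\min_{u\in\PP_2(\R^d)}\big\{\FF_s(u)+\tfrac1{2\sigma}W^2(u,u_\tau^{k-1})\big\}$, attained at $\tilde u_\tau((k-1)\tau+\sigma)$. A routine comparison of minimisers at two values of $\sigma$ shows that $\sigma\mapsto W^2(\tilde u_\tau((k-1)\tau+\sigma),u_\tau^{k-1})$ is nondecreasing, that $m_k$ is locally Lipschitz on $(0,\tau]$ with $m_k'(\sigma)=-\tfrac1{2\sigma^2}W^2(\tilde u_\tau((k-1)\tau+\sigma),u_\tau^{k-1})$ a.e., and that $m_k(0^+)=\FF_s(u_\tau^{k-1})$ (using the lower semicontinuity of $\FF_s$ and $W(\tilde u_\tau((k-1)\tau+\sigma),u_\tau^{k-1})\to0$); this is the De Giorgi variational interpolation identity, cf. \cite[Section 3.2]{AGS}. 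Integrating $m_k'$ over $[\eps,\tau]$ and letting $\eps\downarrow0$ (the left side converges since $m_k(\eps)\to m_k(0^+)$, which forces the integral to be finite) gives
\begin{equation}\label{perstep}
\FF_s(u_\tau^{k-1})=\FF_s(u_\tau^k)+\frac1{2\tau}W^2(u_\tau^k,u_\tau^{k-1})+\frac12\int_{(k-1)\tau}^{k\tau}\frac{W^2(\tilde u_\tau(t),u_\tau^{k-1})}{(t-(k-1)\tau)^2}\,\d t.
\end{equation}
I would then rewrite the two $W^2$-terms: since $u_\tau\equiv u_\tau^k$ and $v_\tau\equiv v_\tau^k$ on $((k-1)\tau,k\tau]$, \eqref{ab} gives $\tfrac1\tau W^2(u_\tau^k,u_\tau^{k-1})=\int_{(k-1)\tau}^{k\tau}\!\int_{\R^d}|\nabla v_\tau|^2u_\tau\,\d x\,\d t$, while \eqref{abDG} turns the last integrand in \eqref{perstep} into $\int_{\R^d}|\nabla\tilde v_\tau(t)|^2\tilde u_\tau(t)\,\d x$. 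Substituting these and summing \eqref{perstep} over $k=1,\dots,N$, the energies telescope, and recalling $u_\tau(N\tau)=u_\tau^N$ this is exactly \eqref{discreteEI}.

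\emph{The comparison estimate.} For $t=0$ inequality \eqref{DGversusPC} is trivial since $\tilde u_\tau(0)=u_\tau^0=u_\tau(0)$. For $t\in((k-1)\tau,k\tau]$ with $\sigma=t-(k-1)\tau$, comparing $\tilde u_\tau(t)$ and $u_\tau^k$ with the competitor $u_\tau^{k-1}$ in their respective minimum problems and using $\FF_s\ge0$ gives $W^2(\tilde u_\tau(t),u_\tau^{k-1})\le2\sigma\FF_s(u_\tau^{k-1})\le2\tau\FF_s(u_\tau^{k-1})$ and $W^2(u_\tau^k,u_\tau^{k-1})\le2\tau\FF_s(u_\tau^{k-1})$; since $\FF_s(u_\tau^{k-1})\le\FF_s(u_\tau^0)\le\FF_s(u_0)$ by \eqref{basicestimate} and \eqref{BFs}, the triangle inequality yields $W(\tilde u_\tau(t),u_\tau(t))\le W(\tilde u_\tau(t),u_\tau^{k-1})+W(u_\tau^{k-1},u_\tau^k)\le2\sqrt{2\tau\FF_s(u_0)}$, which is \eqref{DGversusPC}. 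The one genuinely delicate point is making \eqref{ab} applicable to the De Giorgi interpolant inside \eqref{perstep}: this is exactly what the regularity $\tilde u_\tau(t)\in H^{1-s}(\R^d)$ from the first step is needed for, and it is where the flow-interchange machinery (rather than the abstract minimising-movement theory alone) enters; the rest is bookkeeping and the classical properties of the variational interpolant.
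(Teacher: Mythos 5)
Your proof is correct and follows essentially the same route as the paper's: you obtain the regularity and the identity \eqref{abc} by rerunning Lemmas~\ref{lemma:ed} and~\ref{lemma:euler} with the shifted step $\sigma=t-(k-1)\tau$ and reference $u_\tau^{k-1}$, sum the per-step energy identity over $k$, and get \eqref{DGversusPC} from the competitor bound $W^2\le 2\sigma\FF_s(u_\tau^{k-1})$ together with \eqref{basicestimate}, \eqref{BFs} and the triangle inequality. The only difference is that you re-derive the one-step energy identity from the local Lipschitz property and a.e.\ derivative of the Moreau--Yosida-type envelope $m_k(\sigma)$, whereas the paper simply cites \cite[Lemma~3.2.2]{AGS}; both are fine.
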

\begin{proof}
Fixing $t>0$, by the definition of $\tilde u_\tau(t)$, the same proof of Lemma \ref{lemma:ed} shows that
$\tilde u_\tau(t)\in H^{1-s}(\R^d)$.
For $k$ such that $t\in ((k-1)\tau,k\tau]$, the same argument of Lemma \ref{lemma:euler} shows that
\begin{equation}\label{abc}
\int_{\R^d}|\nabla \tilde v_\tau(t)|^2\tilde u_\tau(t)\,\d x = \frac{1}{(t-(k-1)\tau)^2}\,W^2(\tilde u_\tau(t),u_\tau^{k-1}).
\end{equation}
From \cite[Lemma 3.2.2]{AGS} we have the one step energy identity
\begin{equation*}
\begin{aligned}
 &\frac{1}{2}\frac{W^2(u_\tau^k,u_\tau^{k-1})}{\tau}
+ \frac{1}{2}\int_{(k-1)\tau}^{k\tau}\frac{W^2(\tilde u_\tau(t),u_\tau^{k-1})}{(t-(k-1)\tau)^2}\, \d t
 +\FF_s(u_\tau^k)= \FF_s(u_\tau^{k-1}).
\end{aligned}
\end{equation*}
Defining the function $G_\tau:(0,+\infty)\to\mathbb{R}$ as
\begin{equation*}
    G_\tau(t) = \frac{W(\tilde u(t),u_\tau^{k-1})}{t-(k-1)\tau}, \qquad t\in ((k-1)\tau,k\tau],\quad k=1,2,\ldots
\end{equation*}
and summing from $k=1$ to $N$, we obtain
\begin{equation*}
 \frac{1}{2}\sum_{k=1}^N \tau\frac{W^2(u_\tau^k,u_\tau^{k-1})}{\tau^2}
+ \frac{1}{2}\int_{0}^{N\tau} G^2_\tau(t)\, \d t
 +\FF_s(u_\tau^N)= \FF_s(u_\tau^0).
\end{equation*}
Finally \ref{discreteEI} follows by \eqref{ab} and \eqref{abc}.

The estimate \eqref{DGversusPC} follows by the definition of $\tilde u(t)$, \eqref{basicestimate}, the non-negativity of $\FF_s$ and
the triangle inequality
(see also \cite[Remark 3.2.3]{AGS}).
\end{proof}

In order to pass to the limit by lower semicontinuity in \eqref{discreteEI} 
we recall the following result, see \cite[Theorem 5.4.4]{AGS}.
\begin{lemma}\label{lemma:VF}
If $\{\mu_n\}$ is a sequence in  $\PP(\R^d\times[0,T])$  that narrowly converges to $\mu$ and
$\{w_n\}$ is a sequence of vector fields in $\L^2(\R^d\times[0,T], \mu_n;\R^d)$
satisfying
\begin{equation}\label{boundofVF}
\sup_{n}\int_{\R^d\times[0,T]} |w_n|^2 \, \d\mu_n <+\infty,
\end{equation}
then there exists a vector field $w\in \L^2(\R^d\times[0,T],\mu;\R^d)$
and a subsequence (not relabeled here) such that
\begin{equation*}
\lim_{n\to\infty}\int_{\R^d\times[0,T]} \varphi \cdot w_n \,\d\mu_n = \int_{\R^d\times[0,T]} \varphi \cdot w \, \d\mu, 
 \qquad \forall\,\varphi\in C^\infty_c(\R^d\times[0,T];\R^d),
\end{equation*}
and moreover
\begin{equation}\label{VFlsc}
\liminf_{n\to\infty}\int_{\R^d\times[0,T]} |w_n|^2\, \d\mu_n  \geq\int_{\R^d\times[0,T]} |w|^2 \, \d\mu .
\end{equation}
\end{lemma}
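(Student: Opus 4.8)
\emph{This is precisely \cite[Theorem 5.4.4]{AGS}, and in the text it is simply invoked; should a self-contained argument be wanted, here is the route I would take.}

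The plan is to reinterpret the vector fields as $\R^d$-valued measures. I would set $\sigma_n:=w_n\,\mu_n$, a finite $\R^d$-valued Radon measure on $\R^d\times[0,T]$, and first establish a uniform total-variation bound: since $\mu_n$ is a probability measure, the Cauchy--Schwarz inequality gives
\[
|\sigma_n|(\R^d\times[0,T])=\int_{\R^d\times[0,T]}|w_n|\,\d\mu_n\le\Big(\int_{\R^d\times[0,T]}|w_n|^2\,\d\mu_n\Big)^{1/2}=:M_n,
\]
with $\sup_n M_n=:M<+\infty$ by \eqref{boundofVF}. Next I would prove tightness of $\{\sigma_n\}$: narrow convergence $\mu_n\to\mu$ forces tightness of $\{\mu_n\}$, so for every $\eps>0$ there is a compact $K\subset\R^d\times[0,T]$ with $\sup_n\mu_n(K^c)<\eps$; then $|\sigma_n|(K^c)\le M_n\,\mu_n(K^c)^{1/2}\le M\sqrt{\eps}$, i.e. the $L^2$ bound converts tightness of $\{\mu_n\}$ into tightness of $\{\sigma_n\}$. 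By the vector-valued Prokhorov theorem (applied componentwise to the Jordan decompositions of $\sigma_n$, or first to the total variations $|\sigma_n|$) one extracts a not-relabeled subsequence with $\sigma_n\to\sigma$ narrowly, for some finite $\R^d$-valued measure $\sigma$; in particular $\int\varphi\cdot\d\sigma_n\to\int\varphi\cdot\d\sigma$ for all $\varphi\in C_c^\infty(\R^d\times[0,T];\R^d)$.

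The remaining step would identify $\sigma$ with an $L^2(\mu)$-density. For $\varphi\in C_c(\R^d\times[0,T];\R^d)$, Cauchy--Schwarz gives $\big|\int\varphi\cdot w_n\,\d\mu_n\big|\le M_n\big(\int|\varphi|^2\,\d\mu_n\big)^{1/2}$; letting $n\to\infty$ along the subsequence and using that $|\varphi|^2$ is bounded and continuous (indeed compactly supported), so $\int|\varphi|^2\,\d\mu_n\to\int|\varphi|^2\,\d\mu$, I would obtain
\[
\Big|\int_{\R^d\times[0,T]}\varphi\cdot\d\sigma\Big|\le\big(\liminf_n M_n^2\big)^{1/2}\Big(\int_{\R^d\times[0,T]}|\varphi|^2\,\d\mu\Big)^{1/2}.
\]
Hence $\varphi\mapsto\int\varphi\cdot\d\sigma$ extends to a bounded linear functional on $L^2(\R^d\times[0,T],\mu;\R^d)$ of norm at most $(\liminf_n M_n^2)^{1/2}$, and the Riesz representation theorem provides a unique $w\in L^2(\R^d\times[0,T],\mu;\R^d)$ with $\int\varphi\cdot\d\sigma=\int\varphi\cdot w\,\d\mu$ and $\int|w|^2\,\d\mu\le\liminf_n\int|w_n|^2\,\d\mu_n$. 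Reading the narrow convergence $\int\varphi\cdot\d\sigma_n\to\int\varphi\cdot\d\sigma$ on test functions $\varphi\in C_c^\infty$ then yields the stated convergence, and the norm inequality is \eqref{VFlsc}.

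I expect the tightness step to be the only delicate point: the time variable ranges over the compact $[0,T]$ but the space variable over all of $\R^d$, so mass of $\sigma_n$ could in principle escape to spatial infinity, and it is precisely the $L^2$ bound together with $\mu_n\to\mu$ that rules this out. A routine alternative for \eqref{VFlsc} is to invoke directly the Ioffe--Buttazzo lower-semicontinuity theorem for functionals $\int f(\d\nu/\d\lambda)\,\d\lambda$ with $f(z)=|z|^2$ convex and superlinear, but I would keep the elementary duality argument above because it delivers the density $w$ and its $L^2(\mu)$ bound in one stroke.
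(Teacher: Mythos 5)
Your proof is correct. The paper itself offers no argument for this lemma---it is quoted verbatim from \cite[Theorem~5.4.4]{AGS}, as you note---so there is no proof in the paper to compare against; but your self-contained derivation is sound. AGS prove the general Theorem~5.4.4 (lower semicontinuity of integral functionals $\int f(x,w)\,\d\mu$ under narrow convergence) via a Young-measure / disintegration argument, establishing lower semicontinuity of jointly convex integrands on product spaces; your route---rewriting $w_n\mu_n$ as a vector measure $\sigma_n$, converting the $L^2$ bound plus tightness of $\{\mu_n\}$ into a uniform total-variation bound and tightness for $\{\sigma_n\}$, extracting a narrow limit $\sigma$, and then identifying $\sigma=w\mu$ with $w\in L^2(\mu)$ by Cauchy--Schwarz and Riesz representation on $L^2(\mu;\R^d)$---is more elementary and specific to the quadratic case, which is exactly what the paper uses. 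The duality step delivers both the density $w$ and the norm bound $\|w\|^2_{L^2(\mu)}\le\liminf_n\|w_n\|^2_{L^2(\mu_n)}$ simultaneously, so you need neither Ioffe--Buttazzo nor disintegration. One small point worth making explicit: to realize the bound with $(\liminf_n M_n^2)^{1/2}$ exactly, pass first to a subsequence along which $M_n^2$ converges to $\liminf_n M_n^2$ and only then extract the narrowly convergent subsequence of $\{\sigma_n\}$ (or observe that the product $M_n\cdot(\int|\varphi|^2\d\mu_n)^{1/2}$ has $\liminf$ equal to $(\liminf M_n)\cdot(\int|\varphi|^2\d\mu)^{1/2}$ because the second factor converges to a finite limit); also, the density of $C_c$ in $L^2(\mu;\R^d)$---needed to invoke Riesz---holds because $\mu$ is a finite Borel measure on the Polish space $\R^d\times[0,T]$, hence Radon.
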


\begin{theorem}
\label{th:energy}
If $u\in AC^2([0,+\infty);(\PP_2({\R}^d),W))$ is a limit curve given by {\rm Theorem \ref{th:convergence1}}, 
and $v(t):=K_{s}\ast u(t)$ for $t\geq 0$, then $u$ satisfies the following energy dissipation inequality
\[
\FF_s(u(T))+\int_0^T\int_{\mathbb{R}^d}|\nabla v(t)|^2u(t)\,\d x\,\d t \le \FF_s(u_0), \qquad \forall \;T>0.
\]
\end{theorem}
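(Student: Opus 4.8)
The plan is to pass to the limit as $\tau\to 0$ (along the subsequence $\tau_n$ from Theorem \ref{th:convergence1}) in the discrete energy identity \eqref{discreteEI}, exploiting lower semicontinuity of all the terms. First I would fix $T>0$ and, for each $\tau$, choose $N=N(\tau)$ with $N\tau\to T$; by \eqref{basicestimate} and \eqref{DGversusPC} the curves $u_\tau(N\tau)$ and $\tilde u_\tau(N\tau)$ both converge narrowly to $u(T)$, so by narrow lower semicontinuity of $\FF_s$ (Proposition \ref{prop:FP}) we get $\FF_s(u(T))\le \liminf_\tau \FF_s(u_\tau(N\tau))$. Thus it remains to bound from below the $\liminf$ of the two kinetic terms in \eqref{discreteEI} by $\int_0^T\int_{\R^d}|\nabla v(t)|^2 u(t)\,\d x\,\d t$, after which the energy dissipation inequality follows from \eqref{discreteEI} and \eqref{BFs} by letting $\tau\to0$.

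For the kinetic terms I would apply Lemma \ref{lemma:VF} on $\R^d\times[0,T]$. Take $\mu_\tau$ to be (the restriction to $[0,T]$ of) the measure $u_\tau(t)\,\d x\,\d t$ and $w_\tau:=\nabla v_\tau$; the uniform bound \eqref{boundofVF} is exactly \eqref{basicestimate} combined with \eqref{ab} (which says $\int_0^{N\tau}\int|\nabla v_\tau|^2u_\tau = \sum_k\tau W^2(u_\tau^k,u_\tau^{k-1})/\tau^2 \le 2\FF_s(u_0)$). The narrow convergence $\mu_{\tau_n}\to u(t)\,\d x\,\d t$ on $\R^d\times[0,T]$ follows from the pointwise-in-$t$ narrow convergence \eqref{narrowconv} plus dominated convergence in $t$; and by \eqref{convergenceVF} together with the identification of the limit vector field, the limit $w$ produced by Lemma \ref{lemma:VF} must coincide with $\nabla v$ (tested against $C_c^\infty$ vector fields, $\int\varphi\cdot\nabla v_{\tau_n}u_{\tau_n}\to\int\varphi\cdot\nabla v\,u$ by Lemma \ref{lemma:convergence2}). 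Hence \eqref{VFlsc} gives
\[
\liminf_{n\to\infty}\int_0^{N\tau_n}\!\!\int_{\R^d}|\nabla v_{\tau_n}|^2u_{\tau_n}\,\d x\,\d t \;\ge\; \int_0^T\!\!\int_{\R^d}|\nabla v|^2u\,\d x\,\d t.
\]
The second (De Giorgi) kinetic term in \eqref{discreteEI} is nonnegative, so it can simply be dropped; alternatively one could note $\tilde u_\tau \to u$ as well by \eqref{DGversusPC} and use the same Lemma \ref{lemma:VF} argument, but dropping it suffices for an inequality.

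Putting these together: from \eqref{discreteEI} with $N=N(\tau_n)$,
\[
\frac12\int_0^{N(\tau_n)\tau_n}\!\!\int_{\R^d}|\nabla v_{\tau_n}|^2u_{\tau_n}\,\d x\,\d t + \FF_s(u_{\tau_n}(N(\tau_n)\tau_n)) \le \FF_s(u_{\tau_n}^0)\le\FF_s(u_0),
\]
where the last step is \eqref{BFs}; taking $\liminf_{n\to\infty}$ and using the two lower semicontinuity facts above yields $\tfrac12\int_0^T\int|\nabla v|^2u + \FF_s(u(T))\le\FF_s(u_0)$, wait — I must be careful with the factor: \eqref{ab} shows the kinetic term already equals the full sum with no extra $\tfrac12$ discrepancy once \eqref{discreteEI} is written as above, so the inequality comes out as $\FF_s(u(T))+\int_0^T\int|\nabla v|^2u\le\FF_s(u_0)$ exactly (the two identical kinetic integrals in \eqref{discreteEI}, each with coefficient $\tfrac12$, are handled by keeping one and discarding the nonnegative De Giorgi one, giving coefficient $\tfrac12$; to recover coefficient $1$ one keeps both and uses $\tilde u_\tau\to u$, $\nabla\tilde v_\tau\rightharpoonup\nabla v$ for the second as well). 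I expect the main obstacle to be precisely this bookkeeping together with verifying the hypotheses of Lemma \ref{lemma:VF} for the De Giorgi interpolant $\tilde u_\tau$ — i.e. checking that $\tilde v_\tau$ enjoys the same weak $L^2L^2$ convergence to $\nabla v$, which requires an estimate on $\int_0^T\int|\nabla\tilde v_\tau|^2\tilde u_\tau$ uniform in $\tau$ (available from \eqref{discreteEI} itself) and a narrow convergence $\tilde u_\tau(t)\to u(t)$ for a.e.\ $t$ (available from \eqref{DGversusPC}); all remaining steps are routine.
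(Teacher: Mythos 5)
Your proposal is correct and matches the paper's own proof: fix $T>0$, invoke Lemma \ref{lemma:VF} for both couples $(u_{\tau_n},\nabla v_{\tau_n})$ and $(\tilde u_{\tau_n},\nabla\tilde v_{\tau_n})$, using \eqref{discreteEI} and \eqref{BFs} for the uniform bound, \eqref{narrowconv}, \eqref{DGversusPC} for the narrow convergences, and \eqref{convergenceVF} to identify the common limit vector field as $\nabla v$, then conclude by lower semicontinuity of $\FF_s$ together with \eqref{VFlsc}. Your mid-course correction -- that dropping the De Giorgi kinetic term only yields the inequality with an extra factor $\tfrac12$, so both terms must be kept -- is exactly right and is precisely what the paper does.
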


\begin{proof}
Let $u_{\tau_n}$ be the sequence of Lemma \ref{lemma:convergence2}.   We fix $T>0$ and we apply Lemma \ref{lemma:VF} to the sequences
$\mu_n:=\frac{1}{T}u_{\tau_n}$, $w_n:=\nabla v_{\tau_n}$ and $\tilde\mu_n:=\frac{1}{T}\tilde u_{\tau_n}$, $\tilde w_n:=\nabla \tilde v_{\tau_n}$.
By  \eqref{discreteEI} with $N=N_{\tau_n}:= \lceil T/{\tau_n}\rceil $, and by \eqref{BFs}, the assumption \eqref{boundofVF} is satisfied for both the 
couples $(\mu_n,w_n)$ and   $(\tilde\mu_n,\tilde w_n)$. 
By \eqref{narrowconv} and \eqref{DGversusPC} we have that $\mu_n$ and $\tilde\mu_n$ converge narrowly to $\mu:=\frac{1}{T}u$. 
By \eqref{convergenceVF} we have that the limit point of $w_n$ and $\tilde w_n$ is the same $w=\tilde w= \nabla v$.
Since $\lim_{n\to +\infty}\tau_n N_{\tau_n}=T$, by \eqref{BFs}, the lower semi continuity of $\FF_s$,  \eqref{VFlsc} and \eqref{BFs} we conclude. 
\end{proof}

\section{Boundedness of solutions and $L^{\infty}$ decay.}\label{sec:infty}

In this section we show how to get an $L^\infty$ decay rate starting from the discrete variational approach.
We have indeed to extend the estimate of Theorem \ref{theop} to $p=\infty$. Notice that $\gamma_p$  therein 
converges as $p\to \infty$, but the constant $C_p$ blows up. Therefore, we have to go through a more refined argument.

We start by introducing  a simple recursive estimate.
\begin{proposition}
Let $Q>0, R>0$ and  $q>1$.  If a sequence of positive numbers $\{A_j\}_{\{j\geq 0\}}$ satisfies $A_j\le Q R^j A_{j-1}^q$ for every $j\geq 1$, then
\begin{equation}\label{shifted}
A_j\le  Q^{\beta(j-j_0,q)} R^{\gamma(j-j_0,q)} A_{j_0}^{q^{j-j_0}},\qquad \forall \, j>j_0\geq 0,
\end{equation}
where
\[
\beta({j,q})=\frac{q^j-1}{q-1},\qquad
\gamma({j,q})=\frac{q(q^j-1)}{(q-1)^2}-\frac j{q-1}.
\]
\end{proposition}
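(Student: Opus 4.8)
The statement is an elementary iteration, so the plan is to unfold the recursion $A_j\le QR^jA_{j-1}^q$, linearise it by passing to logarithms, and identify the two finite sums that appear with the closed forms $\beta(\cdot,q)$ and $\gamma(\cdot,q)$. I would first reduce to the case $j_0=0$: for fixed $j_0\ge 0$ the shifted sequence $B_m:=A_{j_0+m}$ ($m\ge0$) satisfies $B_m\le Q R^{j_0+m}B_{m-1}^q$, i.e.\ a recursion of the same shape (with $R^{j_0}$ absorbed into the multiplicative constant), so it is enough to prove \eqref{shifted} for $j_0=0$ with $m=j-j_0$ in place of $j$, keeping track of the powers of $R$ generated by that shift.

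Assume then $A_m\le QR^mA_{m-1}^q$ for all $m\ge1$, with $A_m>0$. Setting $\ell_m:=\log A_m$, the hypothesis becomes $\ell_m\le \log Q+m\log R+q\,\ell_{m-1}$, and since multiplying an inequality by $q>0$ and adding constants preserves it, iterating $m$ times down to $\ell_0$ gives
\[
\ell_m\le\Big(\sum_{i=0}^{m-1}q^{i}\Big)\log Q+\Big(\sum_{i=0}^{m-1}(m-i)q^{i}\Big)\log R+q^{m}\ell_0 .
\]
The first coefficient is the geometric sum $\sum_{i=0}^{m-1}q^{i}=\frac{q^{m}-1}{q-1}=\beta(m,q)$. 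For the second, with $S_m:=\sum_{i=0}^{m-1}(m-i)q^{i}$ the standard ``multiply by $q$ and subtract'' computation gives $(q-1)S_m=\frac{q(q^{m}-1)}{q-1}-m$, hence $S_m=\frac{q(q^{m}-1)}{(q-1)^2}-\frac{m}{q-1}=\gamma(m,q)$. Exponentiating, $A_m\le Q^{\beta(m,q)}R^{\gamma(m,q)}A_0^{q^{m}}$, which is \eqref{shifted} with $j_0=0$; the general case follows from the reduction above.

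This is bookkeeping rather than a genuine difficulty: the only steps meriting care are the closed-form evaluation of $S_m$ and the observation that all operations used in the iteration (logarithm, multiplication by $q>0$, addition of constants) are monotone, so that the chain of inequalities is legitimate. One should also double-check the reduction to $j_0=0$, making sure the $R^{j_0}$ contributions end up in the right exponent.
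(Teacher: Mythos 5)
Your $j_0=0$ argument is correct and is essentially the paper's: the paper unfolds multiplicatively, $A_j\le\prod_{i=0}^{j-1}(QR^{j-i})^{q^i}A_0^{q^j}$, and you unfold the same recursion additively in logarithms, which is the same computation. The gap is precisely the step you flag twice but never carry out, namely ``keeping track of the powers of $R$ generated by that shift.'' If you do carry it out, the shifted sequence $B_m:=A_{j_0+m}$ satisfies $B_m\le (QR^{j_0})\,R^m\,B_{m-1}^q$, and applying the $j_0=0$ case to $(B_m)$ with prefactor $Q'=QR^{j_0}$ yields
\begin{equation*}
A_j\le Q^{\beta(j-j_0,q)}\,R^{\gamma(j-j_0,q)+j_0\,\beta(j-j_0,q)}\,A_{j_0}^{q^{j-j_0}},
\end{equation*}
which carries an extra factor $R^{j_0\beta(j-j_0,q)}$ that is not in \eqref{shifted}. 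For $R>1$ and $j_0>0$ this is a genuine discrepancy, not a bookkeeping slip you can absorb: take $Q=1$, $R=2$, $q=2$ and the extremal sequence $A_0=1$, $A_1=2$, $A_2=16$, which saturates the hypothesis. For $j=2$, $j_0=1$ one has $\beta(1,2)=\gamma(1,2)=1$, so \eqref{shifted} would assert $A_2\le QRA_1^2=8$, contradicting $A_2=16$. Thus the stated inequality is false as written for $j_0>0$; the correct bound is the one with the extra $R^{j_0\beta(j-j_0,q)}$, or equivalently one must take the hypothesis in the form $A_j\le QR^{j-j_0}A_{j-1}^q$ for $j>j_0$.

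To be fair, the paper's own proof does not close this either: for $j_0>0$ it says only ``we apply the previous formula by shifting the indexes,'' which is exactly the unfinished reduction you sketch, so you have not missed an idea the paper supplies. But you should be aware that invoking the proposition with $j_0>0$, as Section 7 does with $j_0=j(\tau)$, requires either carrying the $R^{j_0\beta(j-j_0,q)}$ factor through the subsequent computation or checking that the concrete recursion there can be rewritten with $R^{j-j_0}$ in place of $R^j$; as stated, the $j_0>0$ case of \eqref{shifted} does not follow from the $j_0=0$ case.
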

\begin{proof}
Let $j_0=0$. By  recursively using the assumption we obtain that
\[
A_j\le \prod_{i=0}^{j-1} (Q R^{j-i})^{q^i} A_0^{q^j}= Q^{\beta(j,q)} R^{\gamma(j,q)} a_0^{q^j}, \qquad j>0,
\]
where indeed
\[
\beta({j,q})=\sum_{i=0}^{j-1}q^i=\frac{q^j-1}{q-1},
\]
\[
\gamma({j,q})=\sum_{i=0}^{j-1}(j-i)q^i=\frac{1}{q-1}\,\sum_{i=1}^j (q^i-1)=\frac{q(q^j-1)}{(q-1)^2}-\frac j{q-1}.
\]
If $j_0>0$ we apply the previous formula by shifting the indexes.
\end{proof}

\begin{theorem}\label{th:infinity}
If $u\in AC^2([0,+\infty);(\PP_2({\R}^d),W))$ is a limit curve given by {\rm Theorem \ref{th:convergence1}}, then
there exists a constant $C_\infty$ depending only on $d$ and $s$ such that
\[
	\|u(t)\|_{L^\infty(\R^d)}\leq C_\infty t^{-\gamma_\infty}, \qquad  t>0,
\]
where $\gamma_\infty:=\frac{d}{d+2(1-s)}$. 
\end{theorem}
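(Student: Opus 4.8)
The plan is to adapt the De Giorgi truncation technique of Caffarelli--Soria--V\'azquez \cite{CSV} to the present variational setting, feeding into it the finite-$p$ decay already at our disposal. Fix $T>0$; we shall bound $\|u(T)\|_{L^\infty(\R^d)}$. By {\rm Theorem \ref{theop}} with $p=2$ one has $\int_{T/2}^{T}\|u(t)\|_{L^2(\R^d)}^2\,\d t\le C_2^2\int_{T/2}^{T}t^{-\gamma_\infty}\,\d t\le c(d,s)\,T^{1-\gamma_\infty}$, since $2\gamma_2=\gamma_\infty<1$. Introduce an increasing sequence of truncation levels $\ell_j:=\ell_\infty(1-2^{-j})\uparrow\ell_\infty$, whose value $\ell_\infty$ will be fixed at the end, a decreasing sequence of times $T_j:=\tfrac T2+\tfrac T2(1-2^{-j})\uparrow T$, and set
\[
A_j:=\int_{T_j}^{T}\int_{\R^d}\big((u(t)-\ell_j)_+\big)^2\,\d x\,\d t.
\]
The target is a nonlinear recursion $A_{j+1}\le Q\,R^{\,j}\,A_j^{\,q}$ for some $q>1$, so that \eqref{shifted} forces $A_j\to0$ once $A_0$ lies below an explicit threshold depending on $Q,R,q$; this yields $u(T)\le\ell_\infty$ a.e.\ on $\R^d$.

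The crucial ingredient is a De Giorgi type local energy inequality for the truncations $(u-\ell)_+$ at the level of the scheme. One would like to derive it by flow interchange, in the spirit of {\rm Lemma \ref{lemma:decay1}}, with the quadratic truncated entropy of density $V(r)=\tfrac12((r-\ell)_+)^2$; however $L_V(r)=\tfrac12(r^2-\ell^2)_+$ fails to be globally Lipschitz, so this step cannot be justified without an a priori $L^\infty$ bound on the minimizers — exactly what we are trying to prove. To break this circularity I would, along the narrowly convergent subsequence $\tau_n$ of {\rm Theorem \ref{th:convergence1}}, re-run the minimizing movement scheme on a finer time scale $\sigma\to0$ nested inside $\tau_n$, the double-scale approximation announced in the introduction; on this finer scale the iterates are bounded, and testing against $(u-\ell)_+\in\dot H^{1-s}(\R^d)$ is legitimate. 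Letting first $\sigma\to0$ and then $\tau_n\to0$, and using $\nabla v=\nabla K_s\ast u$, the identity $\int\nabla v\cdot\nabla w\,\d x=\langle u,w\rangle_{1-s}$, and the sign and monotonicity properties of {\rm Proposition \ref{prop:BB}} — which give $\langle u,(u-\ell)_+\rangle_{1-s}\ge\|(u-\ell)_+\|_{\dot H^{1-s}(\R^d)}^2$ and $\langle u,((u-\ell)_+)^2\rangle_{1-s}\ge0$ — one arrives at
\[
\sup_{t\in[t_1',t_2]}\int_{\R^d}((u(t)-\ell)_+)^2\,\d x
+\ell\int_{t_1'}^{t_2}\|(u(t)-\ell)_+\|_{\dot H^{1-s}(\R^d)}^2\,\d t
\le \frac{C}{t_1'-t_1}\int_{t_1}^{t_2}\int_{\R^d}((u(t)-\ell)_+)^2\,\d x\,\d t
\]
for all $\tfrac T2\le t_1<t_1'<t_2\le T$, with $C$ depending only on $d$ and $s$.

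From this inequality the iteration is classical. Applying it with $\ell=\ell_{j+1}$ on the cylinder $(T_{j+1},T)$ (with starting times averaged over $(T_j,T_{j+1})$, which produces geometric-in-$j$ factors), interpolating the space--time $L^2$ norm of $(u-\ell_{j+1})_+$ between $L^\infty_tL^2_x$ and $L^2_t\dot H^{1-s}_x$, and invoking the fractional Gagliardo--Nirenberg--Sobolev inequality \eqref{FractionalGNS} when $d\ge2$ (and \eqref{eq:Bs}, \eqref{extrasobolev1} when $d=1$) together with the Chebyshev bound $|\{u>\ell_{j+1}\}|\le(\ell_{j+1}-\ell_j)^{-2}\int((u-\ell_j)_+)^2$, yields the desired $A_{j+1}\le Q\,R^{\,j}A_j^{\,q}$ with $q>1$ determined by the gain in the Sobolev exponent, and with $Q$ a negative power of $\ell_\infty$ times a constant depending only on $d,s,T$. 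By \eqref{shifted}, choosing $\ell_\infty$ to be the smallest value for which $A_0\le c(d,s)T^{1-\gamma_\infty}$ lies below the convergence threshold — an explicit inequality of the form $\ell_\infty\gtrsim(\text{power of }T)$ — gives $A_j\to0$, hence $u(T)\le\ell_\infty$; tracking the powers of $T$ shows that one may take $\ell_\infty=C_\infty T^{-\gamma_\infty}$ with $C_\infty$ depending only on $d$ and $s$. As $T>0$ was arbitrary, this is the claim.

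The step I expect to be hardest is the second paragraph: rigorously implementing the double-scale minimizing movement scheme so that the truncated energy inequality genuinely survives both limits $\sigma\to0$ and $\tau_n\to0$, and in particular controlling the nonlocal, non-Lipschitz drift uniformly along the way. The remainder — the De Giorgi iteration and the bookkeeping needed to land exactly on the sharp exponent $\gamma_\infty=\lim_{p\to\infty}\gamma_p$ — is technical but routine.
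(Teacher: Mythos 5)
Your high-level plan is on the right track: you correctly identify that the $L^\infty$ bound must come from a De Giorgi-type iteration seeded by the $p=2$ decay, and that a second, finer minimizing-movement scale is the way to produce the required local energy estimate. However there are two issues, one of which is a false obstruction and the other a genuine gap that the paper resolves differently.

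First, the circularity you worry about does not exist. You argue that the flow interchange with $V(r)=\tfrac12((r-\ell)_+)^2$ is blocked because $L_V(r)=\tfrac12(r^2-\ell^2)_+$ is not globally Lipschitz. But Lemma~\ref{lemma:decay1} only needs $L_G$ to be \emph{locally} Lipschitz (the regularized semigroup solutions $w_t$ are bounded for $t>0$, which is enough to invoke Proposition~\ref{prop:BB}), and the lower bound that actually feeds the iteration uses only the $1$-Lipschitz truncation $(u-M)_+$: the paper splits $L_G(u)=(u-M)_+^2+2M(u-M)_+$, discards the nondecreasing first piece (it contributes nonnegatively to $\langle u,\cdot\rangle_{1-s}$), and applies the Lipschitz part of Proposition~\ref{prop:BB} to the second, obtaining $\langle u,L_G(u)\rangle_{1-s}\ge 2M\|(u-M)_+\|^2_{\dot H^{1-s}}$. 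No a priori $L^\infty$ bound on the minimizers is needed for this.

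Second — and this is where your proposal has a genuine gap — the ``De Giorgi type local energy inequality'' you state, with a $\sup_t\int((u-\ell)_+)^2$ term and a dissipation term over a time window, is a parabolic Caccioppoli estimate at the level of the continuous limit curve, and you leave its derivation entirely to the ``hardest'' second paragraph. The paper never proves such an inequality and the iteration is structured so as not to need it. Concretely, the paper fixes $t>0$ and $\tau>0$, sets $T_j=t(1-2^{-j})$, starts a \emph{new} minimizing-movement sequence $\tilde u_{\tau,j}$ from $\tilde u_{\tau,j(\tau)}:=u_\tau(t)$ with geometrically shrinking time steps $T_j-T_{j-1}=t2^{-j}$, and applies the flow interchange with $G=(\cdot-M_{\tau,j})_+^2$ to a \emph{single} such step. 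This yields the clean two-iterate inequality~\eqref{positiveflow},
\[
\int_{\R^d}(\tilde u_{\tau,j-1}-M)_+^2\,\d x\ \ge\ 2M\,(T_j-T_{j-1})\,\|(\tilde u_{\tau,j}-M)_+\|^2_{\dot H^{1-s}(\R^d)},
\]
which, together with Chebyshev and the Sobolev inequality, gives $A_j\le QR^jA_{j-1}^q$ for the \emph{purely spatial} quantities $A_j:=\int(\tilde u_{\tau,j}-M_{\tau,j})_+^2\,\d x$, with $A_{j(\tau)}=\|u_\tau(t)\|^2_{L^2}$ controlled by~\eqref{discreteRELp}. There is no $\sup$-in-time term and no space--time integral $A_j$; the De Giorgi iteration is over the minimizing-movement steps themselves. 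The conclusion $A_j\to0$ then bounds the $L^\infty$ norm of the limit $\tilde u_\tau(t)=\lim_j\tilde u_{\tau,j}$ (a Cauchy sequence in $\PP_2$), and a single passage $\tau\to0$, using~\eqref{Cauchy2} to show $\tilde u_\tau(t)$ has the same narrow limit as $u_\tau(t)$, finishes the proof. So while your Sobolev/Chebyshev bookkeeping and the use of $p=2$ decay are consistent with the paper, the central analytic ingredient you propose (a sup-in-time Caccioppoli estimate for $u$, obtained after two nested limits $\sigma\to0$ and $\tau_n\to0$) is both harder to prove than what is needed and is not the mechanism the paper uses; the discrete flow interchange on the refined scale replaces it entirely.
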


\begin{proof}
Fix $t>0$ throughout.  
We let $\tau>0$ 
and we define
\[
T_j:=t(1-2^{-j}), \quad j=0,1,2,\ldots
\]
and $j(\tau)$ as the smallest integer $j$ such that $T_j>\tau\lfloor t/\tau\rfloor$, 
where $\lfloor a\rfloor:=\max\{m\in\mathbb{Z}:m<a\}$ denotes the left continuous lower integer part of the real number $a$.
The sequence $\{T_j\}$ satisfies
\[
	\tau\lfloor t/\tau\rfloor\le T_{j(\tau)}<T_{j(\tau)+1}<T_{j(\tau)+2}<\ldots< \lim_{j\to+\infty}T_j=t,
\]
and $T_j-T_{j-1}=t2^{-j}$.
We recursively define $\tilde u_{\tau,j}$ by $\tilde u_{\tau,j(\tau)}:=u_\tau(t)$ and
\begin{equation}\label{refined}
\tilde u_{\tau,j}= {\mathrm{argmin}}_{{u\in\PP_2(\mathbb{R}^d)}} \left\{\FF_s(u)+\frac{1}{2(T_j-T_{j-1})}{W_2^2(u, \tilde u_{\tau,j-1})}\right\},\quad  j>j({\tau}) \end{equation}

For given $M>0$ we define $G(u):=(u-M)_+^2$ and $\VV$ as the  displacement convex entropy with density function $G$, according to Definition \ref{entr}. By the definition of $\tilde u_{\tau,j}$ in \eqref{refined},   Lemma \ref{lemma:decay2} can be applied and yields
\begin{equation}\label{FIM}
 (T_j-T_{j-1})\langle \tilde u_{\tau,j},L_G(\tilde u_{\tau_j})\rangle_{1-s}\le   \VV(\tilde u_{\tau,j-1}) -\VV(\tilde u_{\tau,j}),\qquad j>j(\tau).
\end{equation}
Since $L_G(u)=(u-M)^2_++2M(u-M)_+$, $u\mapsto (u-M)_+^2$ is nondecreasing  and $u\mapsto (u-M)_+$ is $1$-Lipschitz continuous, by Proposition \ref{prop:BB} we have
$$ \begin{aligned}
\langle u,L_G(u)\rangle_{1-s}&= \langle u,(u-M)^2_+\rangle_{1-s}+ 2M\langle u,(u-M)_+\rangle_{1-s} 
\\ &\geq 2M\langle u,(u-M)_+\rangle_{1-s}
\\&\geq 2M\langle(u-M)_+,(u-M)_+\rangle_{1-s}
	= 2M\|(u-M)_+\|^2_{\dot H^{1-s}(\mathbb{R}^d)}.
\end{aligned}$$
Then, since $\VV\ge 0$, from \eqref{FIM} we find
\begin{equation}\label{positiveflow}\begin{aligned}
\int_{\mathbb{R}^d}(\tilde u_{\tau,j-1}(x)-M)_+^2\,\d x
\ge 2M(T_j-T_{j-1}) \|(\tilde u_{\tau,j}-M)_+\|^2_{\dot H^{1-s}(\mathbb{R}^d)},\quad j>j(\tau).
\end{aligned}
\end{equation}

Next, we define 
\[
A_{j(\tau)}:=\|\tilde u_{\tau,j(\tau)}\|^2_{L^2(\mathbb{R}^d)}=\|u_\tau(t)\|^2_{L^2(\mathbb{R}^d)}
\]
{\RRR and we separately treat the cases $d\ge 2$ and $d=1$ in the rest of the proof.

\textbf{{The case \boldmath$d\ge 2$}}. 
}
We let $q:={d}/({d-2+2s})$, so that $2q$ is the critical exponent corresponding to the Sobolev inequality \eqref{fractionalembedding} with $r=1-s$
and constant denoted by $S_{d,1-s}$.
We define the constant
$$M_\tau(t):=
\left(\frac{S_{d,1-s}^2}{t}\right)^{\frac{q}{3q-2}}\left(A_{j(\tau)}\,2^{\frac{q(3q-2)}{(q-1)^2}}\right)^{\frac{q-1}{3q-2}}=
2^{q/(q-1)}S_{d,1-s}^{2q/(3q-2)}A_{j(\tau)}^{(q-1)/(3q-2)} \, t^{-q/(3q-2)},$$
and $M_{\tau,j}:=(2-2^{-j})M_\tau(t)$ for  $j>j(\tau)$.
Finally we define 
$$A_j:=\int (\tilde u_{\tau,j}-M_{\tau,j})_+^2\,\d x,\quad j>j(\tau).$$

 Since $f-M_{\tau,j}>0$ implies $f-M_{\tau,j-1}=f-M_{\tau,j}+2^{-j}M_\tau(t)>2^{-j}M_\tau(t)>0$,  a direct computation and the Sobolev inequality \eqref{fractionalembedding} entail, for any $j>j(\tau)$
\begin{equation}
\label{eq:Agei}\begin{aligned}
A_j&\le \left(\frac{2^{j}}{M_{\tau}(t)}\right)^{2q-2}\int_{\mathbb{R}^d}(\tilde u_{\tau,j}(x)-M_{\tau,{j-1}})_+^{2q}\,\d x \\&\le
 \left(\frac{2^{j}}{M_{\tau}(t)}\right)^{2q-2}S_{d,1-s}^{2q}\,\|(\tilde u_{\tau,j}-M_{\tau,{j-1}})_+\|^{2q}_{\dot H^{1-s}(\mathbb{R}^d)}.
\end{aligned}\end{equation}
Now we make use of \eqref{positiveflow}, with $M_{\tau,j}$ in place of $M$, and 
we get for any $j>j(\tau)$, since $M_\tau\le M_{\tau,j}$,
\begin{equation}\label{eq:Aj1}
\begin{aligned}
A_j&\le 
\left(\frac{2^j}{M_\tau(t)}\right)^{2q-2}S^{2q}_{d,1-s}\left(\frac{2^j}{t M_\tau (t)}\right)^{q}\left(\int_{\mathbb{R}^d}(\tilde u_{\tau,j-1}(x)-M_{\tau,j-1})_+^2\,\d x\right)^{q}\\
&\le\frac{S_{d,1-s}^{2q}}{t^{q}M_\tau(t)^{3q-2}}(2^{3q-2})^j\;A_{j-1}^{q}.
\end{aligned}
\end{equation}
We may apply the recursion formula \eqref{shifted}, with $Q={S_{d,1-s}^{2q}}{t^{-q}M_\tau(t)^{2-3q}}$ and $R=2^{3q-2}$, starting from $j_0=j(\tau)$, and we get
\[\begin{aligned}
A_j&\le \left(\frac{\RRR S_{d,1-s}^{2q}}{t^{q}M_\tau(t)^{3q-2}}\right)^{\frac{q^{j-j(\tau)}-1}{q-1}}\:\left(2^{3q-2}\right)^{\frac{q(q^{j-j(\tau)}-1)}{(q-1)^2}-\frac{j-j(\tau)}{q-1}}A_{j(\tau)}^{q^{j-j(\tau)}}\\
&=\left(\frac{{\RRR S_{d,1-s}^{2q}}\,2^{q(3q-2)/(q-1)}\,A_{j(\tau)}^{q-1}}{t^qM_\tau(t)^{3q-2}}\right)^{\frac{q^{j-j(\tau)}-1}{q-1}}\: 2^{-(j-j(\tau))(3q-2)/(q-1)}\: A_{j(\tau)}\\
&=2^{-(j-j(\tau))(3q-2)/(q-1)}\: A_{j(\tau)},
\end{aligned}\]
where we have used the definition of $M_\tau$.
As $q>1$, we have $\lim_{j\to+\infty} A_j=0$.

Notice that, for $j>j(\tau)$, there holds as in Theorem \ref{th:convergence1} the basic estimate
\[
\FF_s(\tilde u_{\tau,j})+\frac{W^2(\tilde u_{\tau,j},\tilde u_{\tau,j-1})}{2(T_j-T_{j-1})}\le \FF_s(\tilde u_{\tau,j-1})\le \FF_s(u_0),
\]
so that
\[
W^2(\tilde u_{\tau,j},\tilde u_{\tau,j-1})\le{2\FF_s(u_0)}(T_j-T_{j-1})={2t\FF_s(u_0)}\,2^{-j},
\]
then
\[
W(\tilde u_{\tau,n},\tilde u_{\tau,m})\le \sqrt{2t\FF_s(u_0)}\sum_{j=m+1}^n 2^{-j/2}.
\]
Therefore, $\{\tilde u_{\tau,j}\}_{j\ge j(\tau)}$ is a Cauchy sequence, converging in $\PP_2(\mathbb{R}^d)$ as $j\to+\infty$ 
to a limit point that we denote by $\tilde u_\tau(t)$, such that
\begin{equation}\label{Cauchy}
	W(\tilde u_{\tau,m},\tilde u_\tau(t))\le \sqrt{2t\FF_s(u_0)}\sum_{j=m+1}^{+\infty}2^{-j/2}.
\end{equation}

Since $\tilde u_{\tau,j}$ narrowly converges to $\tilde u_\tau(t)$ as $j\to+\infty$, 
the lower semicontinuity of $\VV$ with respect to the narrow convergence entails 
(together with $2M_\tau(t)>M_{\tau,j}$)
\[\begin{aligned}
\int_{\mathbb{R}^d} (\tilde u_\tau(t)-2M_\tau(t))_+^2\,\d x&\le\liminf_{j\to+\infty}\int_{\mathbb{R}^d}(\tilde u_{\tau,j}-2M_{\tau}(t))_+^2\,\d x \\&\le\liminf_{j\to+\infty}\int_{\mathbb{R}^d} (\tilde u_{\tau,j}-M_{\tau,j})_+^2\,\d x=\lim_{j\to+\infty}A_j=0,
\end{aligned}\]
that is
\begin{equation}\label{finalestimate}
\|\tilde u_\tau(t)\|_{L^\infty(\mathbb{R}^d)}\le 2M_\tau(t) =2^{(2q-1)/(q-1)}S_{d,1-s}^{2q/(3q-2)}A_{j(\tau)}^{(q-1)/(3q-2)} \, t^{-q/(3q-2)}.
\end{equation}
However, we apply the estimate \eqref{discreteRELp} for $p=2$ to see that
\[
A_{j(\tau)}=
\|u_\tau(t)\|_{L^2(\mathbb{R}^d)}^2\le C_2^2(\tau \lceil t/\tau\rceil)^{-2\gamma_2}
+\tfrac{\tilde C_2}{\sqrt{2}}\,\tau \|u_\tau^0\|_2^{2\beta_2},
\]
where $C_2,\tilde C_2,\gamma_2,\beta_2$ are defined in Lemma \ref{lemma:decay2} and
where the right hand side converges, as $\tau\to 0$, to $C_2^2\,t^{-2\gamma_2}$, see Theorem \ref{theop}.  Hence, from \eqref{finalestimate} we obtain
\[
\limsup_{\tau\to0}\|\tilde u_\tau(t)\|_{L^\infty(\mathbb{R}^d)} \le K_{s,d} \,t^{-2\gamma_2\,\frac{(q-1)}{3q-2}-\frac{q}{3q-2}}=K_{s,d}\, t^{-\frac{d}{d+2-2s}},
\]
where 
\[
K_{s,d}:=2^{(2q-1)/(q-1)}S_{d,1-s}^{2q/(3q-2)}C_2^{2(q-1)/(3q-2)},
\]
and where we used $2\gamma_2=d/(d+2-2s)$ and $q=d/(d-2+2s)$ to compute the exponent of $t$.

By \eqref{Cauchy} with $m=j(\tau)$ we have
\begin{equation}\label{Cauchy2}
	W(u_{\tau}(t),\tilde u_\tau(t))\le \sqrt{2t\FF_s(u_0)}\sum_{j=j(\tau)+1}^{+\infty}2^{-j/2}.
\end{equation}
Since $j(\tau)\to+\infty$ as $\tau\to 0$,  by \eqref{Cauchy2} it follows that along a sequence $\tau_n$ given by Lemma \ref{lemma:convergence2} 
we have that $\{\tilde u_{\tau_n}(t)\}_{n\in\N}$ is tight and converges to the same limit point $u(t)$ of $\{u_{\tau_n}(t)\}_{n\in\N}$.

By lower semicontinuity we conclude that
\[
\|u(t)\|_{L^\infty(\mathbb{R}^d)}\le
K_{s,d}\, t^{-d/(d+2-2s)}.
\]
The result is achieved with $C_\infty =K_{s,d}$.

{\RRR
{\textbf{{The case \boldmath$d=1$ and \boldmath$0<s<1/2$}}}. The argument is analogous to the previous one for $d\ge 2$, we shall only mention the main differences.   Instead of defining $q=d/(d-2+2s)$, we fix   $r\in(0,1/2)$ and
we let $q:=1/(1-2r)$. 
We define $\theta:=r/(1-s)$, and we change  the definition of $M_{\tau}(t)$ by letting
$$M_\tau(t):=2^{q/(q-1)}S_{1,r}^{2q/(2q-2+q\theta)}A_{j(\tau)}^{(q-1)/(2q-2+q\theta)} \, t^{-q\theta/(2q-2+q\theta)}.$$
Using \eqref{eq:r_interpol} instead of \eqref{fractionalembedding}, the analogous of \eqref{eq:Agei} is
\begin{equation}
\label{eq:Agei2}\begin{aligned}
A_j&\le \left(\frac{2^{j}}{M_{\tau}(t)}\right)^{2q-2}\int_{\mathbb{R}^d}(\tilde u_{\tau,j}(x)-M_{\tau,{j-1}})_+^{2q}\,\d x \\&\le
 \left(\frac{2^{j}}{M_{\tau}(t)}\right)^{2q-2}S_{1,r}^{2q}\,
 \,\|(\tilde u_{\tau,j}-M_{\tau,{j-1}})_+\|^{2q(1-\theta)}_{L^{2}(\mathbb{R}^d)}
 \|(\tilde u_{\tau,j}-M_{\tau,{j-1}})_+\|^{2q\theta}_{\dot H^{1-s}(\mathbb{R}^d)}.
\end{aligned}\end{equation}
Moreover by \eqref{FIM} we have 
\begin{equation}\label{eq:mon}
\|(\tilde u_{\tau,j}-M_{\tau,{j-1}})_+\|^{2}_{L^{2}(\mathbb{R}^d)}\leq \|(\tilde u_{\tau,j-1}-M_{\tau,{j-1}})_+\|^{2}_{L^{2}(\mathbb{R}^d)} = A_{j-1}.
\end{equation}
Using \eqref{positiveflow} and \eqref{eq:mon} in \eqref{eq:Agei2} we obtain the analogous of \eqref{eq:Aj1}
\begin{equation}
\label{eq:Agei3}
A_j\le \left(\frac{2^{j}}{M_{\tau}(t)}\right)^{2q-2}S_{1,r}^{2q}\,  \left(\frac{2^{j}}{tM_{\tau}(t)}\right)^{q\theta} A_{j-1}^q.
\end{equation}
Then we can apply the recursion formula with the choice of 
$Q={S_{1,r}^{2q}}{t^{-q\theta}M_\tau(t)^{2-2q-q\theta}}$ and $R=2^{2q-2+q\theta}$
and we obtain, recalling the choice of $M_\tau(t)$,
\[
A_j\le2^{-(j-j(\tau))(2q-2+q\theta)/(q-1)}\: A_{j(\tau)}.
\]
The rest  of the proof carries over along the line of the case $d\ge 2$.
}
\end{proof}

{\RRR
\begin{proofadmain} 
We collect all the results that give the proof of the main Theorem. 
Point i) follows from Proposition \ref{prop:existenceMM}.  Points ii) and iii) follow from 
Theorem \ref{th:convergence1}, Lemma \ref{lemma:convergence2} and Theorem \ref{th:weak_form}. 
Theorem \ref{th:energy} yields point iv). Point v) is a consequence of Theorem \ref{theop} and Theorem \ref{th:infinity} 
for the case $p <+\infty$ and the case $p=+\infty$, respectively. Finally, point vi) follows from Lemma 
 \ref{lemma:ed} and Lemma \ref{lemma:decay1} by letting $\tau\to 0$ and taking into account the lower semicontinuity 
 of $\mathcal{H}$ and of the $L^p$ norms with respect to the narrow convergence. This gives the result 
 for $p<+\infty$. The case $p=+\infty$ follows by passing to the limit as $p\to +\infty$ in the inequality
$ \| u(t)\|_{L^p(\mathbb{R}^d)} \le  \| u_0\|_{L^p(\mathbb{R}^d)}$. 
\end{proofadmain}

\begin{remark}\label{finalremark}\rm  
If we consider positive measure data with mass $M>0$, according to Remark \ref{firstremark},
the constant $C_p$ in point v) has to be multiplied by $M^{\ell_p}$, where $\ell_p$ is given therein. This scaling is deduced from  Lemma \ref{lemma:decay2} if $p<+\infty$, when making use of \eqref{GNS} and \eqref{extrasobolev1} for obtaining \eqref{E2}.
We similarly obtain the value of $\ell_\infty$, since   the constant $C_\infty$ in Theorem \ref{th:infinity} depends on the mass only through $C_2$.
 \end{remark}
 
 }

\section{The limit for $s\to 0$.}
In this last section we are interested in the {\RRR asymptotic analysis} when $s\to 0$.
We start by proving the following lemma which identifies the limit
 of the sequence of solutions $u_s$ of the equation in \eqref{equation}
as $s\to 0$ with the solutions  
of the porous medium equation \eqref{s=0}.

\begin{lemma}
Let $u_0\in L^2(\R^d)$ and $\{u_0^s\}_{s\in(0,1)}$ be a family of initial data such that $u_0^s\in D(\FF_s)$, $u_0^s$ converges narrowly to $u_0$ as
$s\to 0$,  $\sup_{s\in(0,1)} \int_{\R^d} |x|^2 \, \d u_0^s(x) <+\infty$  and 
 $\lim_{s\to 0}\FF_s(u_0^s)=\FF_0(u_0)$.
We denote by $u^s$ a solution of problem \eqref{equation} with initial datum $u_0^s$ given by {\rm Theorem \ref{th:main}}.

If $\{s_n \}_{n\in\N}\subset (0,1)$ is a vanishing sequence, then there exist a curve\\
$u \in AC^2([0,+\infty);(\PP_2(\R^d),W))$
and a subsequence (not relabeled) $\{s_{n}\}$ such that
\begin{equation}\label{nc}
	u^{s_{n}}(t)\to u(t) \quad\mbox{narrowly  as $n\to \infty$ for every $t\geq 0$.}
\end{equation}
Furthermore,  for every $T_0, T$ such that $T>T_0>0$ we have
\begin{equation}\label{sc}
	u^{s_{n}}\to u \quad \mbox{ strongly in $L^2((T_0,T);L^2_{loc}(\mathbb{R}^d))$ \quad as $n\to \infty$,}
\end{equation}
{\RRR and, setting $v^{s_{n}}= K_{s_{n}}\ast u^{s_{n}}$, we have}
\begin{equation}\label{sc1}
	\nabla {\RRR v}^{s_{n}}\to \nabla {u} \quad \mbox{ weakly in $L^2((T_0,T);L^2(\mathbb{R}^d))$ \quad as $n\to \infty$}.
\end{equation}
Moreover, the curve $u$ is a solution of the porous medium equation \eqref{s=0} in the following sense:
\[
\int_0^{+\infty}\int_{\R^d} (\partial_t\varphi - \nabla\varphi\cdot \nabla u) u \,\dd x \,\dd t=0, \quad
\text{for all }\varphi\in C^\infty_c((0,+\infty)\times\R^d)
\]
and the energy dissipation inequality holds
\begin{equation}\label{EI0}
\FF_0(u(T))+\int_0^T\int_{\mathbb{R}^d}|\nabla u(t)|^2u(t)\,\d x\,\d t \le \FF_0(u_0), \qquad \forall \;T>0.
\end{equation}
\end{lemma}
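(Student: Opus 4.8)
The plan is to rerun, \emph{uniformly in $s$}, the compactness machinery of Sections 3--6 and then let $s\to0$, using that $(-\Delta)^s\to I$ and $\FF_s\to\FF_0$.

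First I would collect the a priori bounds that are uniform in $s$. From the energy dissipation inequality \eqref{EDI} and the hypothesis $\lim_{s\to0}\FF_s(u_0^s)=\FF_0(u_0)$ one gets $\sup_s\FF_s(u_0^s)<+\infty$, hence $\sup_s\FF_s(u^s(t))<+\infty$ and $\sup_s\int_0^{+\infty}\int_{\R^d}|\nabla v^s|^2u^s\,\d x\,\d t<+\infty$; together with $\sup_s\int_{\R^d}|x|^2\,\d u_0^s<+\infty$, the computation behind \eqref{boundA} gives $\sup_s\sup_{t\in[0,T]}\int_{\R^d}|x|^2\,\d u^s(t)<+\infty$ for every $T$, and \eqref{md} gives $\sup_s\int_0^{+\infty}|(u^s)'|^2(t)\,\d t<+\infty$. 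Finally, using the regularizing effect (Lemma \ref{lemma:decay2}, Theorem \ref{theop}) one bounds $\HH(u^s(T_0))\le\KK(u^s(T_0))$ uniformly in $s$ for every fixed $T_0>0$ --- here one must check that the constants $C_0,\gamma_0$ do not degenerate as $s\to0$ --- and then Corollary \ref{furthercorollary} yields the crucial spacetime bound $\sup_s\int_{T_0}^T\|u^s(t)\|^2_{\dot H^{1-s}(\R^d)}\,\d t<+\infty$ for all $0<T_0<T$. Interpolating $\|u^s(t)\|_{\dot H^{1-2s}}\le\|u^s(t)\|^{s}_{\dot H^{-s}}\|u^s(t)\|^{1-s}_{\dot H^{1-s}}$ and recalling $\|\nabla v^s(t)\|_{L^2}=\|u^s(t)\|_{\dot H^{1-2s}}$ then shows $\{\nabla v^s\}$ bounded in $L^2((T_0,T);L^2(\R^d))$.

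Second, I would extract the limit curve and pass to the limit in the equation. The refined Arzel\`a--Ascoli argument (Proposition 3.3.1 of \cite{AGS}), used exactly as in Theorem \ref{th:convergence1}, gives along a subsequence $s_n\to0$ the convergence \eqref{nc} with $u\in AC^2([0,+\infty);(\PP_2(\R^d),W))$; \eqref{sc} is then obtained as in Lemma \ref{lemma:convergence2}, using compactness of $\{\phi u^{s_n}(t)\}$ in $H^{-s-\eps}(\R^d)$ and interpolation with the uniform $L^2((T_0,T);\dot H^{1-s})$ bound (so that in particular $u(t)\in L^2(\R^d)$). Passing to a further subsequence, $\nabla v^{s_n}\rightharpoonup w$ weakly in $L^2((T_0,T);L^2(\R^d))$; writing $\widehat{\nabla v^{s_n}}(\xi)=i\xi|\xi|^{-2s_n}\widehat{u^{s_n}}(\xi)$, using $|\xi|^{-2s_n}\to1$ pointwise, the uniform $\dot H^{-s}$ and $\dot H^{1-s}$ bounds, and splitting the frequency domain into $\{|\xi|\le1\}$, $\{1<|\xi|<R\}$, $\{|\xi|\ge R\}$, one identifies $w=\nabla u$ (equivalently $v^{s_n}=K_{s_n}*u^{s_n}\to u$ in the sense of distributions, since $K_{s_n}\to\delta_0$), which is \eqref{sc1}. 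Then, starting from the weak formulation of \eqref{equation} for $u^{s_n}$ (point iii) of Theorem \ref{th:main}), the linear term passes to the limit by \eqref{nc}, while in the drift term $\nabla\varphi\cdot\nabla v^{s_n}\,u^{s_n}$ one combines the weak $L^2$ convergence of $\nabla v^{s_n}$ with the strong $L^2_{loc}$ convergence of $u^{s_n}$ on $\supp\varphi$, obtaining $\int_0^{+\infty}\int_{\R^d}(\partial_t\varphi-\nabla\varphi\cdot\nabla u)\,u\,\d x\,\d t=0$. For \eqref{EI0} I would pass to the limit in \eqref{EDI} by lower semicontinuity: $\liminf_n\FF_{s_n}(u^{s_n}(T))\ge\FF_0(u(T))$ by Fatou in Fourier variables (as $|\xi|^{-2s_n}|\widehat{u^{s_n}}(\xi)|^2\to|\hat u(\xi)|^2$ pointwise), $\liminf_n\int_0^T\int_{\R^d}|\nabla v^{s_n}|^2u^{s_n}\ge\int_0^T\int_{\R^d}|\nabla u|^2u$ by Lemma \ref{lemma:VF}, and $\FF_{s_n}(u_0^{s_n})\to\FF_0(u_0)$ by hypothesis.

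The main obstacle is the uniform-in-$s$ control of $\int_{T_0}^T\|u^s(t)\|^2_{\dot H^{1-s}}$ (hence of the entropy $\HH(u^s(T_0))$ via the regularizing effect): this is what simultaneously powers the strong $L^2_{loc}$ convergence and the $L^2$ weak convergence of $\nabla v^{s_n}$, and it requires verifying that the constants in Lemma \ref{lemma:decay2} stay bounded as $s\to0$ --- a point needing care, since the relevant fractional Sobolev constants $S_{d,1-s}$ degenerate as $s\to0$ when $d=2$. The second delicate point is the identification of the weak limit of $\nabla v^{s_n}$ as $\nabla u$ (the limit density), rather than as $\nabla v$, which is exactly where the singular-to-local transition $K_{s_n}\to\delta_0$ is used.
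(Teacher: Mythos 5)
Your proposal reproduces the paper's own proof of this lemma essentially step for step: the uniform-in-$s$ a priori bounds from \eqref{EDI}, \eqref{boundA}, \eqref{md}; the entropy bound $\HH(u^s(T_0))\le C_0T_0^{-\gamma_0}$ via Lemma \ref{lemma:decay2} and Corollary \ref{furthercorollary}; the Arzel\`a--Ascoli compactness through Proposition 3.3.1 of \cite{AGS}; the strong $L^2_{loc}$ and weak $L^2$ convergences by Sobolev interpolation; the Fourier-side identification of the weak limit $\nabla v^{s_n}\rightharpoonup\nabla u$ using $|\xi|^{-2s_n}\to1$ and dominated convergence on test functions; and finally the passage to the limit in the weak formulation and in the energy inequality via Fatou and Lemma \ref{lemma:VF}. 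So the two arguments coincide.

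The one point you flag as needing care---whether the constant $C_0$ of Lemma \ref{lemma:decay2} remains bounded as $s\to0$---is indeed the only delicate spot, and the paper disposes of it merely by asserting that ``$C_0$ is bounded with respect to $s$.'' Your worry about $d=2$ is in fact justified: from \eqref{best} with $r=1-s$, $S_{2,1-s}$ contains the factor $\Gamma(d/2-r)=\Gamma(s)$, which diverges as $s\to0$; hence $A_{2,s}=S_{2,1-s}^{-2}\to0$, $\tilde C_0\to0$, and $C_0=(\tilde C_0(\beta_0-1))^{-\gamma_0}\to+\infty$. So the \emph{explicit} constant of Lemma \ref{lemma:decay2} does degenerate in dimension two, and the paper's proof as written has the same unaddressed gap that you point out. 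It is repairable: the inequality underlying \eqref{FractionalGNS} at the subcritical exponent $q_2=2+(1-s)$ does admit a constant that is uniformly bounded for $s$ near $0$, because $q_2$ stays far below the critical exponent $2/s$ for $\dot H^{1-s}(\R^2)$ (the limiting inequality $\|u\|_{L^3(\R^2)}\le C\|u\|_{L^1}^{1/3}\|u\|_{\dot H^1}^{2/3}$ is a standard Gagliardo--Nirenberg estimate); it is only the derivation by interpolation through the \emph{critical} Sobolev embedding that spoils the constant. For $d\ge3$ one has $S_{d,1-s}\to S_{d,1}<+\infty$, and for $d=1$ the index $\tfrac{1-s}{4-2s}\to\tfrac14$ stays in the interior of $(0,1/2)$, so no repair is needed there. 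Modulo this observation---which applies to the paper's proof as well---your proposal is complete and follows the same route.
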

\begin{proof}
Since $\lim_{s\to 0} \FF_s(u^s_0) = \FF_0(u_0)$ we fix $s_0\in(0,1)$ such that $\FF_s(u^s_0) \leq \FF_0(u_0)+1/2$ for any $s\in (0,s_0)$.
Denoting by $|(u^s)'|(t)$ the Wasserstein metric derivative of the curve $t\mapsto u^s(t)$,
by \eqref{md} it holds
\begin{equation}\label{boundmd}
  \int_0^{+\infty}|(u^s)'|^2(r)\,\d r \leq 2 \FF_s(u^s_0) \leq 2\FF_0(u_0)+1 . 
\end{equation}

We have tightness and equicontinuity of the family $\{u^s\}_{s\in (0,s_0)}$.
Indeed, fixing $T>0$, the estimate
$$\begin{aligned}
W^2(u^s(t),\delta_0) &\leq 2W^2(u^s(t),u^s_0)+2W^2(u^s_0,\delta_0)
\leq 2t\!\int_0^t |(u^s)'|^2(r)\,\d r +  \!2\!\int|x|^2u^s_0(x)\,\d x \\
&\leq 2T (2\FF_0(u_0)+1)+  2\sup_{s\in (0,s_0)}\int|x|^2u^s_0(x)\,\d x ,
\end{aligned}$$
implies that the set $\{u^s(t): s\in (0,s_0), t\in [0,T] \}$ is tight and consequently, by Prokhorov Theorem, narrowly compact.

By \eqref{boundmd} there exists $m\in L^2(0,+\infty)$ such that the sequence 
$\{|(u^{s_n})'|\}$ converges to $m$ (up to subsequences) weakly in $L^2(0,+\infty)$.
Then, for every $t_1,t_2 \in [0,+\infty)$, $t_1<t_2$, it holds
\begin{equation}\label{equicont2}
	\limsup_{n\to \infty}W(u^{s_n}(t_2),u^{s_n}(t_1))\leq \lim_{n\to\infty}\int_{t_1}^{t_2} |(u^{s_n})'|(r)\,\d r= \int_{t_1}^{t_2} m(r)\,\d r
\end{equation}
and the equicontinuity is proved.
By the compactness argument of \cite[Proposition~3.3.1]{AGS} we obtain the existence of a continuous limit curve $u$
such that \eqref{nc} holds. 
In particular, since for $t>0$ $u^s(t)$
is absolutely continuous with respect to the Lebesgue measure, \eqref{nc} translates (for $t>0$) into
\begin{equation}
\label{eq:nc2}
\int_{\R^d}u^{s_n}(t,x)\phi(x)\d x \to \int_{\R^d}u(t,x)\phi(x)\d x,\,\,\,\,\,\forall t>0 \,\,\,\,\forall \phi\in C_b(\R^d).
\end{equation}
Passing to the limit in \eqref{equicont2} we obtain 
\begin{equation*}\label{equicont3}
	W(u(t_2),u(t_1))\leq \int_{t_1}^{t_2} m(r)\,\d r,  \qquad \forall\, t_1,t_2 \in [0,+\infty),\quad t_1<t_2,
\end{equation*}
and $u \in AC^2([0,+\infty);(\PP_2(\R^d),W))$.

We fix $\sigma>0$ such that $\sigma<\min\{s_0,1/2\}$.
For $s\in (0,\sigma]$, 
the energy inequality \eqref{EDI} yields
\begin{equation}\label{negbound}
	 \|u^s(t)\|^2_{H^{-\sigma}(\R^d)}\leq 2\FF_0(u_0)+1, \qquad \forall s\in (0,\sigma], \quad \forall t\in [0,+\infty).
\end{equation}

We fix a compact $K\subset\R^d$ and a compactly supported smooth cutoff function $\phi:\R^d\to[0,1]$ such that $\phi=1$ on $K$.
By interpolation we have
$$ \begin{aligned}
\|u^s(t)-u(t)\|_{L^2(K)} &\leq \|\phi u^s(t)-\phi u(t)\|_{L^2(\R^d)}  \\
	&\leq  \|\phi u^s(t)-\phi u(t)\|^{1/2}_{H^{-1/2}(\R^d)}  \|\phi u^s(t)-\phi u(t)\|^{1/2}_{H^{1/2}(\R^d)} \\
	& \leq  C \|\phi u^s(t)-\phi u(t)\|^{1/2}_{H^{-1/2}(\R^d)}  \| u^s(t)- u(t)\|^{1/2}_{H^{1/2}(\R^d)}.
\end{aligned}$$
By \eqref{negbound}, \eqref{eq:nc2} and the compact embedding of Sobolev spaces it follows that (up to subsequences)
$\lim_{n\to+\infty} \|\phi u^{s_n}(t)-\phi u(t)\|^{1/2}_{H^{-1/2}(\R^d)} =0$.

We fix $T_0>0$ and $T>T_0$. 
By \eqref{L2estimate}, \eqref{RI1} and \eqref{boundentropy}, for $s\leq 1/2$ we have
\begin{equation*}
\begin{aligned}
	&\int_{T_0}^T\|u^s(t)\|_{ H^{1/2}(\mathbb{R}^d)}^2\,dt \leq \int_{T_0}^T\|u^s(t)\|_{ H^{1-s}(\mathbb{R}^d)}^2\,dt \\
	& \le \Big(1+C_0T_0^{-\gamma_0} +T\FF_s(u^s_0)+\int_{\R^d}|x|^2u^s_0(x)\,\d x \Big) \\
	&+ \Big( 2\FF_s(u^s_0) (T-T_0)  \Big)^{1-s} \Big(C_0T_0^{-\gamma_0} + c\Big(1 +T\FF_s(u^s_0)+\int_{\R^d}|x|^2\,\d u^s_0(x) \Big)\Big)^{s},
\end{aligned}
\end{equation*}
where the dependence of the constants $C_0$ and $\gamma_0$ on $s$ is stated in Lemma \ref{lemma:decay2}.
Since $C_0$ is bounded with respect to $s$, it follows that
\begin{equation*}
	\sup_{n\in\N}\int_{T_0}^T\|u^{s_n}(t)\|_{ H^{1/2}(\mathbb{R}^d)}^2\,dt <+\infty,  
	\qquad \int_{T_0}^T\|u(t)\|_{ H^{1/2}(\mathbb{R}^d)}^2\,dt <+\infty.
\end{equation*}
By the previous estimates and dominated convergence theorem we obtain \eqref{sc}.

Analogously from \eqref{vestimate} we obtain
\begin{equation*}\begin{aligned}
	&\int_{T_0}^T\|u^s(t)\|_{ \dot H^{1-2s}(\mathbb{R}^d)}^2\,dt
	\\&\qquad\le (2\FF_s(u_0^s)(T-T_0))^{s} \Big(C_0T_0^{-\gamma_0} +c\big(T\FF_s(u^s_0)+\int_{\R^d}|x|^2u^s_0(x)\,\d x\big)\Big)^{1-s}.
	\end{aligned}
\end{equation*}
Since $ \|\nabla v^s(t)\|_{L^{2}(\R^d)} =  \|u^s(t) \|_{\dot H^{1-2s}(\R^d)}$, taking into account that $C_0$ is bounded as $s\to 0$,
from the previous estimate it follows that
$\{\nabla v^s\}_{s\in(0,\sigma)}$ is weakly compact in $L^2((T_0,T);L^2({\R}^d))$. 
Moreover $\nabla v_{s_n}$ converges to  $\nabla u$ in the sense of distributions in $\R^d\times(T_0,T)$.
Indeed for $\varphi\in C^\infty_c(\R^d\times(T_0,T);\R^d)$, denoting by $\varphi_t$ the function $x\mapsto \varphi(x,t)$, by Plancherel's Theorem 
we have 
$$
(2\pi)^d\int_{T_0}^T\int_{\R^d} \nabla v^{s_n} \cdot \varphi \,\d x\, \d t 
= -i\int_{T_0}^T\int_{\R^d} |\xi|^{-2s_n} \widehat{u^{s_n}(t)}(-\xi) \xi \cdot \widehat\varphi_t(\xi) \,\d\xi\, \d t .
$$
Since  $| |\xi|^{-2s_n} \widehat{u^{s_n}(t)}(-\xi) \xi \cdot \widehat\varphi_t(\xi)| 
\leq \max\{1, |\xi|\} | \widehat\varphi_t(\xi)| $ and $\widehat\varphi_t \in \SS(\R^d)$ for every $t\in(T_0,T)$, by \eqref{nc} and
Lebesgue dominated convergence the right hand side of the above formula converges to
$$
 -i\int_{T_0}^T\int_{\R^d}  \widehat{u(t)}(-\xi) \xi \cdot \widehat\varphi_t(\xi) \,\d\xi\, \d t
= (2\pi)^d\int_{T_0}^T\int_{\R^d} \nabla u \cdot \varphi \,\d x\, \d t .
$$
For the stated compactness in  $L^2((T_0,T);L^2({\R}^d))$ we obtain \eqref{sc1}.

As a result, we can easily pass to the limit in the weak formulation of the equation. 
Concerning the limit procedure in the energy inequality,  
we observe that by \eqref{nc}
and Fatou's lemma we obtain
$$
\liminf_{s\to 0}\FF_s(u^s(t))\geq \FF_0(u(t)).
$$
Moreover by Lemma \ref{lemma:VF} and the stated weak convergence we obtain
$$
\liminf_{s\to 0}\int_0^T\int_{\mathbb{R}^d}|\nabla v^s(t)|^2u^s(t)\,\d x\,\d t\geq \int_0^T\int_{\mathbb{R}^d}|\nabla u(t)|^2u(t)\,\d x\,\d t,
$$
and we conclude.
\end{proof}

  \begin{proofad} The proof follows by the previous Lemma and 
the uniqueness of the solution of equation \eqref{pmequation} with initial datum in $L^2(\R^d)$ satisfying the energy inequality
{\RRR (see \cite[Theorem 11.2.5]{AGS}, which also shows that this unique solution satisfies all the properties of \cite[Theorem 11.2.1]{AGS}, in particular the energy identity). }
\end{proofad}

\subsection*{Acknowledgements} 
The authors would like to thank J. A. Carrillo for useful conversations on the topic of this paper. 
{\RRR The authors acknowledge the referees for their careful reading of the paper and for their suggestions.}
E.M. was partially supported by the FWF project M1733-N20.
S.L. and A.S. were  partially supported  by  a  MIUR-PRIN 2010-2011 grant for the project Calculus of Variations.
{\RRR A.S. gratefully acknowledges the financial support of the FP7-IDEAS-ERC-StG \#256872 (EntroPhase).}
 The authors are member of the GNAMPA group of the Istituto Nazionale di Alta Matematica (INdAM).

\subsection*{Conflict of interest}
The authors declare that they have no conflict of interest.

\end{document}